\newcommand{\remind}[1]{{\bf ** #1 **}}
\def\ge{\geqslant}
\def\le{\leqslant}
\def\a{\alpha}
\def\g{\gamma}
\def\d{\delta}
\def\D{\Delta}
\def\s{\sigma}
\def\t{\tau}
\def\th{\theta}
\def\l{\lambda}
\def \ud{\underline}
\def\af{\mathrm{af}}
\def\lup{\rightharpoonup}
\def\rank{\mathrm{rank}}
\def\<{\langle}
\def\>{\rangle}
\newcommand{\de}{{\mathrm{def}}}
\newcommand{\BA}{\ensuremath{\mathbb {A}}\xspace}
\newcommand{\BF}{\ensuremath{\mathbb {F}}\xspace}
\newcommand{{\BG}}{\ensuremath{\mathbb {G}}\xspace}
\newcommand{\BJ}{\ensuremath{\mathbb {J}}\xspace}
\newcommand{{\BK}}{\ensuremath{\mathbb {K}}\xspace}
\newcommand{\BN}{\ensuremath{\mathbb {N}}\xspace}
\newcommand{\BQ}{\ensuremath{\mathbb {Q}}\xspace}
\newcommand{\BR}{\ensuremath{\mathbb {R}}\xspace}
\newcommand{\BS}{\ensuremath{\mathbb {S}}\xspace}
\newcommand{\BZ}{\ensuremath{\mathbb {Z}}\xspace}
\newcommand{\CI}{\ensuremath{\mathcal {I}}\xspace}
\newcommand{\CK}{\ensuremath{\mathcal {K}}\xspace}
\newcommand{\CO}{\ensuremath{\mathcal {O}}\xspace}
\newcommand{\CS}{\ensuremath{\mathcal {S}}\xspace}
\newcommand{\CT}{\ensuremath{\mathcal {T}}\xspace}
\newcommand{\CW}{\ensuremath{\mathcal {W}}\xspace}
\newcommand{\Ad}{{\mathrm{Ad}}}
\newcommand{\cl}{{\mathrm{cl}}}
\DeclareMathOperator{\Gal}{Gal}
\newcommand{\id}{\ensuremath{\mathrm{id}}\xspace}
\def\tW{\tilde W}
\def\dw{\dot{w}}
\def\dx{\dot{x}}
\def\doubleparenthesis#1{{(\!({#1})\!)}}
\def\doublebracket#1{{[\![{#1}]\!]}}
\newtheorem{theorem}{Theorem}
\newtheorem{proposition}[theorem]{Proposition}
\newtheorem{lemma}[theorem]{Lemma}
\newtheorem{corollary}[theorem]{Corollary}
\theoremstyle{definition}
\newtheorem{definition}[theorem]{Definition}
\newtheorem{example}[theorem]{Example}
\newtheorem*{example*}{Example}
\newtheorem{remark}[theorem]{Remark}
\newtheorem*{function*}{Function}
\numberwithin{equation}{section}
\numberwithin{theorem}{section}
\renewcommand{\to}{%
   \ifbool{@display}{\longrightarrow}{\rightarrow}%
   }
\let\shortmapsto\mapsto
\renewcommand{\mapsto}{%
   \ifbool{@display}{\longmapsto}{\shortmapsto}%
   }
\newlength{\olen}
\newlength{\ulen}
\newlength{\xlen}
\newcommand{\xra}[2][]{%
   \ifbool{@display}%
      {\settowidth{\olen}{$\overset{#2}{\longrightarrow}$}%
       \settowidth{\ulen}{$\underset{#1}{\longrightarrow}$}%
       \settowidth{\xlen}{$\xrightarrow[#1]{#2}$}%
       \ifdimgreater{\olen}{\xlen}%
          {\underset{#1}{\overset{#2}{\longrightarrow}}}%
          {\ifdimgreater{\ulen}{\xlen}%
             {\underset{#1}{\overset{#2}{\longrightarrow}}}
             {\xrightarrow[#1]{#2}}}}%
      {\xrightarrow[#1]{#2}}
   }
\newcommand{\xyra}[2][]{%
   \settowidth{\xlen}{$\xrightarrow[#1]{#2}$}%
   \ifbool{@display}%
      {\settowidth{\olen}{$\overset{#2}{\longrightarrow}$}%
       \settowidth{\ulen}{$\underset{#1}{\longrightarrow}$}%
       \ifdimgreater{\olen}{\xlen}%
          {\mathrel{\xymatrix@M=.12ex@C=3.2ex{\ar[r]^-{#2}_-{#1} &}}}%
          {\ifdimgreater{\ulen}{\xlen}%
             {\mathrel{\xymatrix@M=.12ex@C=3.2ex{\ar[r]^-{#2}_-{#1} &}}}
             {\mathrel{\xymatrix@M=.12ex@C=\the\xlen{\ar[r]^-{#2}_-{#1} &}}}}}%
      {\mathrel{\xymatrix@M=.12ex@C=\the\xlen{\ar[r]^-{#2}_-{#1} &}}}%
   }
\newcommand{\xla}[2][]{%
   \ifbool{@display}%
      {\settowidth{\olen}{$\overset{#2}{\longleftarrow}$}%
       \settowidth{\ulen}{$\underset{#1}{\longleftarrow}$}%
       \settowidth{\xlen}{$\xleftarrow[#1]{#2}$}%
       \ifdimgreater{\olen}{\xlen}%
          {\underset{#1}{\overset{#2}{\longleftarrow}}}%
          {\ifdimgreater{\ulen}{\xlen}%
             {\underset{#1}{\overset{#2}{\longleftarrow}}}
             {\xleftarrow[#1]{#2}}}}%
      {\xleftarrow[#1]{#2}}
   }
\newcommand{\isoarrow}{%
   \ifbool{@display}{\overset{\sim}{\longrightarrow}}{\xrightarrow\sim}%
   }
\begin{document}

\title[]{Lifting Deligne-Lusztig Reduction and Geometric Coxeter Type Elements} 

\author{Sian Nie}
\address{Academy of Mathematics and Systems Science, Chinese Academy of Sciences, Beijing 100190, China}

\address{ School of Mathematical Sciences, University of Chinese Academy of Sciences, Chinese Academy of Sciences, Beijing 100049, China}
\email{niesian@amss.ac.cn}

\author{Felix Schremmer}
\address{Department of Mathematics and New Cornerstone Science Laboratory, The University of Hong Kong, Hong Kong SAR, China}

\email{schremmer@hku.hk}

\author[Qingchao Yu]{Qingchao Yu}
\address{Institute for Advanced Study, Shenzhen University, Nanshan District, Shenzhen, Guangdong, China}
\email{qingchao\_yu@outlook.com}

\thanks{}



\begin{abstract}
Cases of Shimura varieties where the special fibre of a Rapoport-Zink space is simply the union of classical Deligne-Lusztig varieties are known as \emph{fully Hodge-Newton decomposable} ones, and have been studied with great interest in the past. In recent times, the focus has shifted to identify tractable cases beyond the fully Hodge-Newton decomposable ones, and several instances have been identified where only products of classical Deligne-Lusztig varities with simpler spaces occur.

In our paper, we provide a uniform framework to capture these phenomena. By studying liftings from the affine flag variety to the loop group and combining them with the Deligne-Lusztig reduction method, our main result is a powerful criterion to show that an affine Deligne-Lusztig variety is the product of a classical Deligne-Lusztig variety with affine spaces and pointed affine spaces.

We introduce the class of elements that we call having \emph{geometric Coxeter type}, strictly including previously studied notions such as positive Coxeter type or finite Coxeter type. These elements of geometric Coxeter type satisfy the conditions for our main result and also a condition on the Newton stratification introduced by Mili\'cevi\'c-Viehmann.
\end{abstract}

\maketitle

\section{Introduction}
In a seminal paper, Deligne and Lusztig \cite{Deligne1976} constructed a class of varieties, henceforth known as Deligne-Lusztig varieties. Given a connected reductive group $G$ over $\mathbb F_q$ with Frobenius $\sigma$ and an element $w$ of the Weyl group of $G$, the Deligne-Lusztig variety $X_w$ is a reduced scheme over $\overline{\mathbb F}_q$ parametrizing Borel subgroups $B$ of $G_{\overline{\mathbb F}_q}$ such that $B$ and $\sigma(B)$ have relative position $w$. Then the group $G(\mathbb F_q)$ acts on $X_w$, and the \'etale cohomology of these Deligne-Lusztig varieties is of central interest for the representation theory of finite Groups of Lie type.

The affine analogues of Deligne-Lusztig varieties were introduced by Rapoport \cite{Ra} in the context of reduction of Shimura varieties. Since then, affine Deligne-Lusztig varieties (ADLV) have been studied intensely, not only for their connection to Shimura varieties but also to study moduli spaces of local $G$-shtukas and affine Lusztig varieties. In this affine situation, we are given a non-archimedian local field $F$.

Concretely, such a field would either be given by the field of formal Laurent series over a finite field $F = \mathbb F_q\doubleparenthesis t$, or as a finite extension of some $p$-adic rationals. Denote the completion of the maximal unramified extension by $L$, and the Frobenius again by $\sigma\in \Gal(L/F)$. We choose a $\sigma$-stable Iwahori subgroup $\CI\subseteq G(L)$. Then for any two elements $w,b\in G(L)$, the ADLV $X_w(b)$ is a reduced subscheme (or sub perfect scheme) of the affine flag variety with $\overline{\mathbb F}_q$-valued points
\begin{align*}
    X_w(b) = \{g\in G(L)/\CI \mid g^{-1} b\sigma(g) \in \CI w \CI\}.
\end{align*}
The group
\begin{align*}
\BJ_b(F) = \{ g \in G(L) \mid  g^{-1}b\s(g) = b\}
\end{align*}
acts naturally on $X_w(b)$ by left multiplication.

After decades of focused research, we have some \enquote{coarse} understanding of the geometry of $X_w(b)$, e.g.\ a natural criterion to determine if $X_w(b)$ is empty for basic $b$ \cite{GHN1} as well as an algorithm to determine the dimension of $X_w(b)$ together with the number of $\BJ_b(F)$-orbits of top dimensional irreducible components \cite[Theorem~2.19]{He-CDM}. However, a finer description of the geometry has only been found in a handful of special cases.

In \cite{SSY}, the second and third named author together with Shimada introduce a class of affine Weyl group elements, named elements of \emph{positive Coxeter type}. If $w\in\tW$ has positive Coxeter type, $b$ is basic and $G$ is split over $F$, then they show that $X_w(b)$ is universally homeomorphic, as scheme over $\overline{\mathbb F}_q$, to the product of a (classical) Deligne-Lusztig variety of Coxeter type, an affine space and a disjoint union of points. One key step in that proof is to construct a \enquote{lifting} $\psi: X_w(b)\to  G(L)$, which composed with the natural projection $G(L)\to G(L)/I$ yields the identity map on $X_w(b)$.

In this article, we study such liftings systematically. We show that they can be constructed inductively using a reduction method of Deligne-Lusztig. For a recall of the notion of reduction paths and trees, we refer to Section~\ref{sec:2.4}.

Our main result is the following.

\begin{theorem}[Theorem \ref{thm:main}]
Let $w\in\tW$, $b \in G(L)$ with $X_w(b)\ne\emptyset$. Let $\mathcal{T}$ be a reduction tree of $w$. Assume that
\begin{enumerate}
    \item there exists a unique path $\ud p$ of $\CT$ such that $\text{end}(\ud p) \in [b]$.
    \item $X_{\mathrm{end}(\ud p)}(b) $ has a lift.
\end{enumerate}
Then $X_w(b)$ is universally homeomorphic to the trivial fiber bundle
$$  X_{\mathrm{end}(\ud p)}(b) \times (\BG_m)^{\ell_{\text{I}}(\ud p)} \times  (\BA^1)  ^{\ell_{\text{II}}(\ud p)},$$
where $\ell_{\text{I}}(\ud p)$ and $\ell_{\text{II}}(\ud p)$ are the number of type I and type II reduction steps in $\ud p$ respectively.
\end{theorem}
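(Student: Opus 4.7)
I would prove the theorem by induction on the length $\ell_{\text{I}}(\ud p) + \ell_{\text{II}}(\ud p)$ of the reduction path. The base case, when the path has length zero, is immediate since then $w = \mathrm{end}(\ud p)$. For the inductive step, let $w_1$ be the target of the first reduction step along $\ud p$, associated to some simple affine reflection $s$, and write $\ud p_1$ for the remainder of $\ud p$, viewed as a path in the subtree $\CT_{w_1}$ of $\CT$ rooted at $w_1$. By condition~(1), $\ud p_1$ is still the unique path in $\CT_{w_1}$ whose endpoint lies in $[b]$, so the inductive hypothesis will apply to $(w_1,\ud p_1)$.

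The first technical point is to show that, among the children of $w$ in $\CT$ other than $w_1$, all contribute empty subvarieties to $X_w(b)$. The Deligne--Lusztig reduction at $s$ presents $X_w(b)$ as a disjoint union of iterated bundles indexed by the children $w'$ of $w$, with base $X_{w'}(b)$. For each sibling $w' \neq w_1$, condition~(1) forces the subtree $\CT_{w'}$ to contain no path whose endpoint lies in $[b]$; iterating the reduction inside $\CT_{w'}$ and invoking the standard vanishing $X_{w''}(b) = \emptyset$ for any $\sigma$-conjugacy-minimal endpoint $w''$ not $\sigma$-conjugate to $b$ then forces $X_{w'}(b) = \emptyset$. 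Thus $X_w(b)$ is universally homeomorphic to the single surviving piece, which the reduction formula realizes as a $\BG_m$-bundle (if the step is of type~I) or an $\BA^1$-bundle (if of type~II) over $X_{w_1}(b)$.

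To finish, I would combine this with a strengthened form of the inductive hypothesis that also carries along a lift: not only does $X_{w_1}(b)$ satisfy the asserted product description, but the given lift $\psi$ of $X_{\mathrm{end}(\ud p)}(b)$ extends to a lift $\psi_1 \colon X_{w_1}(b) \to G(L)$. Using $\psi_1$ and the explicit description of one Deligne--Lusztig reduction step, I would exhibit the fiber parameter (an element of $\BG_m$ or $\BA^1$) as a global function on $X_w(b)$, thereby trivializing the bundle $X_w(b) \to X_{w_1}(b)$ and simultaneously producing a lift on $X_w(b)$ that feeds the next layer of the induction. Composing with the product decomposition furnished by the inductive hypothesis for $X_{w_1}(b)$ yields the desired description of $X_w(b)$.

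The main obstacle is this last step: propagating the lift at every reduction step and verifying that, given a lift on the base, the one-step reduction formula produces an explicit global trivialization of the fiber. This is precisely the content of the systematic study of liftings through the Deligne--Lusztig reduction procedure developed in the earlier sections of the paper; the present theorem is essentially the synthesis of those one-step lifting results with the combinatorial structure of the reduction tree and the uniqueness guaranteed by~(1).
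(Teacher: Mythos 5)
Your proposal is correct and is essentially the same approach as the paper, which gives a one-line proof citing Propositions~\ref{prop:type0}, \ref{prop:type1}, \ref{prop:type2}; your write-up unpacks that terse argument in the right way. One small imprecision worth flagging: you describe the first edge $w\lup w_1$ of the tree as ``associated to some simple affine reflection $s$,'' but in the paper's construction each edge actually combines a possibly non-trivial chain of length-preserving cyclic shifts $w\approx_\sigma w'$ with a single Deligne--Lusztig reduction at $s_a$; so propagating the lift and the trivialization along one edge uses Proposition~\ref{prop:type0} (the Case~0 lifting across cyclic shifts) together with Proposition~\ref{prop:type1} or~\ref{prop:type2}, and your ``one-step reduction'' should be read as this composite. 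Also, the intermediate step showing that all sibling branches contribute nothing is already built into Proposition~\ref{prop:dec} once condition~(1) is assumed, so that paragraph re-derives an available fact rather than filling a gap.
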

The geometry of $X_{\mathrm{end}(\ud p)}(b)$ is well-understood, being essentially a union of classical Deligne-Lusztig varieties. We remark that universal homeomorphisms become isomorphisms in the category of perfect schemes, and that passing to perfection does not affect \'etale cohomology.

We say an element $w$ satisfies the strong multiplicity one property, if for any $[b]\in B(G)_w$ and some (equivalently any) reduction tree $\CT$ of $w$, there exists a unique path $\ud p$ of $\CT$ such that $\text{end}(\ud p) \in [b]$.

We say $w$ is of geometric Coxeter type, if $w$ satisfies the strong multiplicity one property and each endpoint of $w$ is a cyclic shift of an element of the form $ux$, where $x$ is a $\s$-straight and $u$ is a $\Ad(x)\circ\s$-Coxeter element (see Definition \ref{def:geo-cox} and Example \ref{ex:geo-cox}).

The notion of geometric Coxeter element generalizes the elements of the form $t^{\mu}c$, which have been studied extensively in \cite{HNY1}, and elements of positive Coxeter type from \cite{SSY}. 

As an application of our main result and a classical result of Lusztig \cite[Theorem 2.6]{Lusz76}, we prove the following.
\begin{theorem}[Theorem \ref{thm:geo-cox}]
Let $w\in \tW$ be a geometric Coxeter type element and $b\in G(L)$ with $X_w(b) \ne \emptyset$. Then 
\begin{enumerate}
    \item all irreducible components of $X_w(b)$ lie in a single $\BJ_b(F)$-orbit;

    \item each irreducible component of $X_w(b)$ is universally homeomorphic to the trivial bundle
$$X'\times (\BG_m)^{\ell_{\text{I}}(\ud p)} \times (\BA^1) ^{\ell_{\text{II}}(\ud p)},$$
where $X'$ is a classical Deligne-Lusztig variety of Coxeter type, $\ell_{\text{I}}(\ud p)$ and $\ell_{\text{II}}(\ud p)$ are as in Theorem \ref{thm:main}.
\end{enumerate}
\end{theorem}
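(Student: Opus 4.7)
The plan is to deduce this theorem as a direct application of Theorem~\ref{thm:main} together with the classical structure theorem of Lusztig for Coxeter--type Deligne--Lusztig varieties. Since $w$ is of geometric Coxeter type, it satisfies strong multiplicity one, so condition~(1) of Theorem~\ref{thm:main} holds automatically for every reduction tree $\CT$ of $w$ and every $[b]\in B(G)_w$. Thus fixing a reduction tree $\CT$, there is a unique path $\ud p$ with $\mathrm{end}(\ud p)\in[b]$; write $w'=\mathrm{end}(\ud p)$. The whole argument then reduces to a careful analysis of $X_{w'}(b)$.

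First I would reduce to the normal form: by the definition of geometric Coxeter type, $w'$ is cyclic-shift equivalent to an element of the form $ux$ where $x$ is $\sigma$-straight and $u$ is an $\Ad(x)\circ\sigma$-Coxeter element. Cyclic shifts induce universal homeomorphisms of affine Deligne--Lusztig varieties that are moreover $\BJ_b(F)$-equivariant, so I may and will assume $w'=ux$. Since $x$ is $\sigma$-straight, $[b]=[x]$ is the straight $\sigma$-conjugacy class of $w'$, and the combinatorics of $\Ad(x)\circ\sigma$ acting on the finite root system is of Coxeter type. The key geometric input is then to identify $X_{ux}(b)$ with a product of a classical Deligne--Lusztig variety of Coxeter type (for a reductive quotient of the $\sigma$-centralizer of $x$) with a discrete set indexed by the $\BJ_b(F)$-orbits, and to construct the lift required in condition~(2) of Theorem~\ref{thm:main}.

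The main obstacle is verifying that $X_{ux}(b)$ admits a lift in the sense of the main theorem. Here I would imitate the construction in \cite{SSY}: since $x$ is straight, one can choose a representative of $b$ together with a section of $I\to I/I^+$ on the relevant cells, and then concatenate this with the explicit description of the Coxeter stratum coming from $u$. Once the lift is constructed, Theorem~\ref{thm:main} gives a universal homeomorphism
\[
  X_w(b)\simeq X_{ux}(b)\times (\BG_m)^{\ell_{\mathrm{I}}(\ud p)}\times(\BA^1)^{\ell_{\mathrm{II}}(\ud p)}.
\]
For the geometry of $X_{ux}(b)$ itself, Lusztig's theorem \cite[Theorem~2.6]{Lusz76} shows that the classical Deligne--Lusztig variety attached to a Coxeter element of a finite group of Lie type is smooth, irreducible, and its irreducible components in the ADLV setting form a single orbit under the action of $\BJ_b(F)$ through its projection to the reductive quotient of the centralizer of $x$.

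Combining these ingredients, each irreducible component of $X_{ux}(b)$ is universally homeomorphic to a single classical Coxeter-type Deligne--Lusztig variety $X'$, and all components lie in one $\BJ_b(F)$-orbit. Multiplying by $(\BG_m)^{\ell_{\mathrm{I}}(\ud p)}\times(\BA^1)^{\ell_{\mathrm{II}}(\ud p)}$ via the fiber bundle of Theorem~\ref{thm:main}, and noting that the $\BJ_b(F)$-action on $X_w(b)$ is induced from its action on $X_{ux}(b)$ (the reduction steps being $\BJ_b(F)$-equivariant), both~(1) and~(2) follow. The only real work, beyond invoking Theorem~\ref{thm:main} and Lusztig's theorem, is the construction of the lift for $X_{ux}(b)$, which I expect to be the principal technical step and to occupy most of the proof.
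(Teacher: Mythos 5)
Your overall architecture matches the paper's: verify condition~(1) of Theorem~\ref{thm:main} from strong multiplicity one, reduce the endpoint $w'$ by cyclic shifts to $ux$ with $x$ straight, construct a lift for $X_{ux}(b)$, and then combine with Lusztig's classical theorem. However, the one step you flag as ``the principal technical step'' --- constructing the lift for $X_{ux}(b)$ --- is exactly where your proposal stays vague, and the sketch you give (``choose a representative of $b$ together with a section of $I\to I/I^+$ on the relevant cells'') is not what works, nor obviously workable as stated. The paper's actual construction in Section~\ref{sec:5.2} is sharper and does not require any hand-built section of the Iwahori: it first invokes \cite[Theorem~4.8]{He14} to get the identifications in \ref{eq:5.1}--\ref{eq:5.2}, identifying $X_w^{\CK}(b)$ with the classical Coxeter Deligne--Lusztig variety $X_{c_K}^{\bar\CK}$ of the reductive quotient $\bar\CK$; then it uses \cite[Corollary~2.5]{Lusz76} (not Theorem~2.6, which is used for irreducibility/smoothness) to observe that this Coxeter DLV lies entirely in the big cell $\bar\CI w_{0,K}\bar\CI/\bar\CI$, hence $X_w^{\CK}(b)$ lies inside a single affine Schubert cell $\CI w_{0,K}\CI/\CI$; finally Lemma~\ref{lem:liftAffineSchubertCell} (every affine Schubert cell has a lift) immediately produces the lift $\psi_{\CK}$, which is then spread to all of $X_w(b)$ by choosing coset representatives of $\BJ_b(F)/(\BJ_b(F)\cap\CK)$, and to general $b'$ by Lemma~\ref{lem:lift}(1). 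Without the observation that the entire Coxeter DLV sits inside one Schubert cell, the ``relevant cells'' in your sketch are undetermined and the construction does not go through cleanly; you would need to redo the patching argument by hand. So your proof has the right shape but is missing the specific mechanism (Lusztig's Corollary~2.5 plus the Schubert-cell lifting lemma) that makes the lift construction actually routine.
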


Note that the statement (2) is new even for elements of the form $t^{\mu}c$.

Given an element $w\in\tW$ of geometric Coxeter type, we can also give a complete description of the set of all $b\in G(L)$ with $X_w(b)\neq\emptyset$, and describe the numbers $\ell_{\text{I}}(\ud p)$ and $\ell_{\text{II}}(\ud p)$ directly in terms of $w$ and $b$, without referring to a choice of reduction path. This is done in Section~\ref{sec:6}, using the purity of the Newton stratification.

Examples for elements of geometric Coxeter type often arise from the Ekedahl-Oort stratification of affine Deligne-Lusztig varieties in affine Grassmannians. Given a $\sigma$-stable hyperspecial $\CK\supseteq \CI$ and a cocharacter $\mu\in X_\ast(T)$ coming from the Shimura datum, the reduced special fibre of the corresponding Rapoport-Zink space is given by
\begin{align*}
    X_\mu(b) = \{g \CK\in G(L)/\CK\mid g^{-1} b\sigma(g)\in \CK\varepsilon^{\mu} \CK\}.
\end{align*}
Let $W_{\CK}\subseteq \tW$ be the finite subgroup generated by the affine reflections contained in $\CK$, and $w\in W_{\CK} t^\mu W_{\CK}$ be of minimal length in its right $W_{\CK}$ coset.
An Ekedahl-Oort stratum of $X_\mu(b)$ is given by
\begin{align*}
    \mathrm{EO}_{\CK,w}(b) := \{g\in G(L)/\CK\mid \exists k\in \CK:~ (gk)^{-1} b\sigma(gk)\in \CI w \CI\}\subseteq X_\mu(b).
\end{align*}
It is known from \cite{HeRapoport} that $X_\mu(b)$ is the disjoint union of these strata, and they also give a combinatorial description of the closure relations among the strata. Moreover, the map $X_w(b)\to \mathrm{EO}_{\CK,w}(b)$, sending $g \CI\in G(L)/\CI$ to $g \CK\in G(L)/ \CK$ is surjective with finite fibres \cite[Theorem~6.21]{HeRapoport}.

If $\mu$ is not central, then is always at least one Ekedahl-Oort stratum of the form $w = t^\mu c$, where $c$ is a $\sigma$-Coxeter element in the finite Weyl group. This element has geometric Coxeter type as explained above, and we may conjecture that the map $X_w(b)\to \mathrm{EO}_{K,w}(b)$ is an isomorphism in this case. Hence our main result applies to at least this one Ekedahl-Oort stratum for every Shimura datum.

In some particular cases of Shimura varieties, such as the variety $\mathrm{GU}(2,n-2)$ at split primes \cite{Shimada2024_gln} or the Siegel modular variety of genus $3$ \cite{Shimada2024_siegel}, it follows from these explicit studies made by Shimada that all Ekedahl-Oort strata are of positive Coxeter type. Further such cases will be discussed systematically in a forthcoming article by the second named author and Viehmann.

In \cite{CI}, Chan and Ivanov studied Lusztig semi-infinite Deligne-Lusztig varieties for inner forms of ${\rm GL}_n$, whose homology groups realize cases of local Langlands correspondence and Jaquet-Langlands correspondence. For certain special element of geometric Coxeter type, they proved that the attached semi-infinite Deligne-Lusztig variety is a natural inverse limit of corresponding deep level affine Deligne-Lusztig varieties, which are products of classical Deligne-Lusztig varieties and affine spaces. Similar results are also obtained for ${\rm GSp}_{2n}$ by Takamatsu \cite{Takamatsu}. By combining Theorem 1.2 and the work \cite{Nie2023}, it is likely to extended these results of Chan, Ivanov and Takamatsu to all elements of geometric Coxeter type in arbitrary Iwahori-Weyl groups. 

We review the main group theoretic setup and the Deligne-Lusztig reduction in Section~\ref{sec:2}. Special care is taken to make sure we can treat both the equal and mixed characteristic case simultaneously. Our main result is introduced and proved in Sections \ref{sec:3} and \ref{sec:4}. We introduce the notion of geometric Coxeter type in Section~\ref{sec:5}, where we also show their lifting property. In the final Section~\ref{sec:6}, we study the Newton stratification inside $\CI w \CI$ for $w$ of geometric Coxeter type following Mili\'cevi\'c-Viehmann. We show that the closure of a Newton stratum is a union of Newton strata, and derive simple combinatorial formulas for $\dim X_w(b)$ and $\ell_1(\ud p), \ell_2(\ud p)$ in Theorem~\ref{thm:geo-cox}.

\subsection*{Acknowledgments}
We thank Miaofen Chen, Ryosuke Shimada and Eva Viehmann for helpful discussions. SN was partially supported by National Key R\&D Program of China, No. 2020YFA0712600, CAS Project for Young Scientists in Basic Research, Grant No. YSBR-003, and National Natural Science Foundation of China, No. 11922119, No. 12288201 and No. 12231001. FS was partially supported by the New Cornerstone Foundation through the New Cornerstone Investigator grant, and by Hong Kong RGC grant 14300122, both awarded to Prof. Xuhua He.

\section{Preliminary}\label{sec:2}
\subsection{}\label{sec:2.1}
Let $F$ denote a non-archimedian local field with ring of integers $\CO_F$, whose completion of the maximal unramified extension we denote by $L$. This means that $F$ is either a finite extension of some $p$-adic field, or a field of formal Laurent series over a finite field. In either case, we let $\varepsilon\in \CO_F$ be a uniformizer and denote the Frobenius of $L/F$ by $\s\in \Gal(L/F)$. The residue field of $F$ is denoted $\mathbb F_q$.


Let $G$ be a connected and reductive group over $\CO_F$.

We fix a maximal torus and Borel $T\subseteq B\subseteq G$ defined over $\CO_F$. Let $\Phi$ be the set of roots of $T$. Let $V = X_\ast(T)\otimes\BR$. We write our Borel and its opposite as $B = TU$ and $B^-=TU^-$ of $G$ over $\CO_F$. Let $\Phi^+$ and $\Phi^-$ be the corresponding sets of positive roots and negative roots respectively. Let $X_\ast(T)^+$ be the corresponding set of dominant characters. Let $W_0 = N_G(T)(L)/T(L)$ be the finite Weyl group. Let $\tilde\BS$ be the index set of affine simple reflections. In other words, the set of affine simple reflections is $\{s_i\mid i\in \tilde\BS\}$. Let $\BS_0$ be the index set of finite simple reflections. For each $i\in\BS_0$, let $\a_i$ and $\a_i^{\vee}$ be the corresponding simple root and coroot respectively. 

We denote by $L^+$ the positive loop group functor, i.e.\ $L^+ G(R) = G(R\doublebracket t)$ if $F$ has equal characteristic resp.\ the Witt vector version $L^+G(R) = G(W(R)\otimes_{W(\mathbb F_q)}\mathcal O_F)$ if $F$ is of mixed characteristic (cf.\ \cite[Section~1.1]{Zhu}).
We define the loop group functor $L$ similarly: In the equal characteristic case, $LG(R) = G(R\doubleparenthesis t)$ is an ind-scheme over $\CO_F$, whereas in the mixed characteristic case, $L(R) = G(W(R)\otimes_{W(\mathbb F_q)}F)$ seen as ind-(perfect scheme).

As Iwahori subgroup $\CI\subseteq G(L)$, we choose the preimage of $B^-(\overline{\mathbb F}_q)$ under the natural projection $G(\mathcal O_L)\to G(\overline{\mathbb F}_q)$. As in \cite[Section~1.4]{Zhu}, this is set of $\mathcal O_L$-rational points of an Iwahori group scheme. The affine flag variety is the fpqc quotient of $LG$ by this Iwahori group scheme (cf.\ \cite[Section~9]{BS}), and it is an ind-(perfect scheme) over $\overline{\mathbb F}_q$. The set of $\overline{\mathbb F}_q$-rational points is given by $G(L)/\CI$.


For the remaining paper, we will only consider perfect schemes in the mixed characteristic setting and suppress the adjective \enquote{perfect}, so our notion of \enquote{scheme} coincides with the usual definition only in the equal characteristic setting.

As scheme over $\BF_q$, the ring $\CO$ or, more precisely, the positive loop group of the one-dimensional additive group $L^+G_a$, is isomorphic to the infinite dimensional affine space $\mathbb A^{\mathbb N}$. We fix once and for all an embedding $\iota : \mathbb A^1\hookrightarrow L^+ G_a$ such that after taking $\overline{\BF}_q$-valued points, the composite map 
\begin{align*}
    \overline{\BF}_q\xrightarrow{\iota(\overline{\BF}_q)} \CO_L\xrightarrow{\varepsilon\to 0}\overline{\BF}_q
\end{align*}
is the identity. In the case of equal characteristic, $\iota(\overline{\BF}_q)$ can be chosen to be the map sending $x\in\overline{\BF}_q$ to the constant power series $x\in L$. In mixed characteristic, the usual choice is to send $x\in \overline{\BF}_q$ to its Teichm\"uller lift in $L$.

For each $\a \in \Phi$, we have the root subgroup $U_\a$. Note that $\s$ sends $U_\a$ to $U_{\s(\a)}$. Fix an isomorphism $u_\a:LG_a\to U_\a$ for each root $\a$ and fix a lifting $\dot{z} \in N_G(T)(\CO_L)$ for each $z\in W_0$ in the usual compatible way as in \cite[\S  2]{HL15}. More explicitly, we require the following.
\begin{enumerate}[(i)]
    \item For any reduced expression $z = s_1 s_2\cdots s_k$, we have $\dot{z} = \dot{s}_1\dot{s}_1 \cdots \dot{s}_k$.
    

    \item For $i\in \BS_0$, we have $\dot{s}_{i}^2 =  \a_i^{\vee}(-1) $.
    \item  For $i\in \BS_0$ and $x\in \bar{\BF}_p$, we have $\dot{s}_i^{-1} u_{\a_i}(x) \dot{s}_i = u_{-\a_i}(-x)$ 

    \item For $\a\in \Phi$ and $x\in \bar{\BF}_p$, we have $\s(u_{\a}(x)) = u_{\s(\a)}(\s(x))$.
\end{enumerate}


Let $\tW = N_G(T)(L)/T(\CO_L) $ be the Iwahori-Weyl group. Note that we can identify $\tW$ with $X_*(T)\rtimes W_0=\{t^\l z \mid \l\in X_*(T),z\in W_0\}$. For any $w = t^{\l}z\in \tW$, we fix a lifting $\dot{w} =  \l(t)\dot{z} \in N_G(T)(\CO_L)$ extending the lifting of $W_0$.

Let $\Phi_{\af} =  \BZ \times \Phi$ be the set of affine roots. By convention, the set of affine simple roots are
$$\D_{\af} = \{ a_i = (0, -\a_i) \mid i\in \BS_0 \}    \sqcup \{a_0 = (1,\theta)\},$$
where $\th$ is the highest root. Let $\Phi^+_{\af} = (\BZ_{\ge0} \times \Phi^-) \sqcup (\BZ_{\ge1} \times \Phi^+ )$. Let $\Phi^-_{\af} = \Phi_{\af} - \Phi^+_{\af}$.

For any $a  = (k,\a) \in \BZ\times \Phi$, define the map of $\overline{\BF}_q$-schemes
$u_a:\mathbb A^1 \to LU_\a$ via the composition
\begin{align*}
    \mathbb A^1 = G_a\xrightarrow\iota L^+ G_a \hookrightarrow L G_a\xrightarrow{\cdot \varepsilon^k} LG_a\xrightarrow{u_\a}U_\a.
\end{align*}
In more explicit terms, this sends $x\in \overline{\BF}_q$ to $u_\a(\iota(x)\varepsilon^k)$.
We denote the image of $u_a$ by $U_a\subseteq L U_\a$, the affine root subgroup corresponding to the affine root $a = (k,\a)$.
The corresponding affine reflection is $s_a = t^{k\a^{\vee}} s_\a$.

\subsection{}
For any $b\in G(L)$, denote by \begin{align*}[b] = \{g^{-1}b\sigma(g)\mid g\in G(L)\}
\end{align*}be the $\s$-conjugacy class of $G(L)$ containing $b$. Denote $B(G) =\{[b] \mid b\in G(L)\}.$

Let $b \in G(L)$ and $w\in \tW$. Define the affine Deligne-Lusztig variety (ADLV) $X_w(b)$ to be the reduced subscheme of the affine flag variety with $\overline{\mathbb F}_q$-valued points
$$X_w(b) = \{ g\CI\in G(L)/\CI\mid g^{-1}b\s(g) \in \CI \dot{w} \CI   \}.$$

Note that the isomorphism class of $X_w(b)$ depends only on $w$ and the $\s$-conjugacy class $[b]$ of $b$. Note also that $X_w(b)$ is naturally acted by the group 
$$\BJ_b(F) = \{ g \in G(L) \mid  g^{-1}b\s(g) = b\},$$
the group of $\s$-centralizer of $b$.

\subsection{}
One fundamental method of studying affine Deligne-Lusztig varieties in affine flag varieties is the \emph{Deligne-Lusztig reduction method}, as introduced (in the affine case) by Görtz-He.

\begin{proposition}[{Cf.\ \cite[Corollary 2.5.3]{Goertz2010b}}]\label{prop:DL-reduction}
Let $w\in \tW$ and let $i\in \tilde\BS$.
The following two statements hold for any $b\in G(\breve F)$.
\begin{enumerate}[(1)]
\item If $\ell(s_i w \s (s_{ i}))=\ell(x)$, then there exists a $\mathbb J_b(F)$-equivariant universal homeomorphism $X_w(b)\rightarrow X_{s_i w \s(s_i)}(b)$.
\item If $\ell(s_i w \s(s_i))=\ell(x)-2$, then there exists a decomposition $X_w(b)=X_{I}\sqcup X_{\text{II}}$ such that
\begin{itemize}
\item $X_{I}$ is open and there exists a $\mathbb J_b(F)$-equivariant morphism $X_{I}\rightarrow X_{s_i w}(b)$, which is  the composition of a Zariski-locally trivial $\mathbb G_m$-bundle and a universal homeomorphism. 
\item $X_{\text{II}}$ is closed and there exists a $\mathbb J_b(F)$-equivariant morphism $X_{\text{II}}\rightarrow X_{s_i w \s(s_i)}(b)$, which is the composition of a Zariski-locally trivial $\mathbb A^1$-bundle and a universal homeomorphism. 
\end{itemize}
\end{enumerate}
\end{proposition}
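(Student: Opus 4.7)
The plan is to argue by the classical Deligne--Lusztig reduction method, based on the parahoric $P_a := \CI \sqcup \CI \dot s_a \CI$ and the Zariski-locally trivial $\mathbb P^1$-fibration $\pi: G(L)/\CI \to G(L)/P_a$. The fibre $P_a/\CI$ decomposes as $\{e\CI\} \sqcup (U_a \dot s_a \CI/\CI)$, with the open cell identified with $\BA^1$ via the affine root subgroup $U_a = u_a(\BA^1)$ fixed in Section~\ref{sec:2.1}. The essential combinatorial input is the Iwahori--Bruhat product formula
\begin{equation*}
    \CI \dot s_a \CI \cdot \CI \dot w \CI = \begin{cases} \CI \dot{s_a w} \CI, & \ell(s_a w) > \ell(w), \\ \CI \dot{s_a w} \CI \sqcup \CI \dot w \CI, & \ell(s_a w) < \ell(w), \end{cases}
\end{equation*}
and its right-handed analogue involving $\s(s_a)$, which jointly decompose $P_a \dot w P_a$ into at most four $\CI$-double cosets $\CI \dot v \CI$ for $v \in \{w, s_a w, w \s(s_a), s_a w \s(s_a)\}$. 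Since $P_a \dot v P_a = P_a \dot w P_a$ for each such $v$, the images $\pi(X_v(b))$ coincide, and the different $X_v(b)$ are distinguished fibrewise by the Iwahori double coset stratification of $P_a \dot w P_a$.

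For Case (1), where $\ell(s_a w \s(s_a)) = \ell(w)$, exactly two of the four cosets have length $\ell(w)$, namely $\CI \dot w \CI$ and $\CI \dot{s_a w \s(s_a)} \CI$. A direct computation with the $\mathbb P^1$-parameter $p = u_a(t) \dot s_a$ on $g^{-1} b \s(g)$ shows that each fibre $\pi^{-1}(gP_a)$ meets $X_w(b)$ in exactly one point (namely $g\CI$) and $X_{s_a w \s(s_a)}(b)$ in exactly one other point; the assignment $g\CI \mapsto$ (this other point) then defines the desired $\BJ_b(F)$-equivariant universal homeomorphism, $\BJ_b(F)$-equivariance being automatic from the left-multiplication nature of the action.

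For Case (2), with $\ell(s_a w \s(s_a)) = \ell(w) - 2$, the coset $\CI \dot w \CI$ is the unique one of maximal length in $P_a \dot w P_a$. Here each fibre $\pi^{-1}(gP_a)$ meets $X_{s_a w}(b)$ in a single point and $X_{s_a w \s(s_a)}(b)$ in a single point, while the intersection with $X_w(b)$ fills an open subset of $\mathbb P^1$ whose shape depends on the two other marked points. Concretely, one would define $X_{\mathrm I} \subseteq X_w(b)$ as the open locus of $g\CI$ admitting a canonical lift to $X_{s_a w}(b)$ within the same $\pi$-fibre, and $X_{\mathrm{II}}$ as its closed complement, lifting instead to $X_{s_a w \s(s_a)}(b)$. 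The fibrewise count then exhibits $X_{\mathrm I}$ as a $\BG_m$-bundle over $X_{s_a w}(b)$ and $X_{\mathrm{II}}$ as an $\BA^1$-bundle over $X_{s_a w \s(s_a)}(b)$, openness of $X_{\mathrm I}$ following from upper semi-continuity of the Iwahori-double-coset label of $g^{-1} b \s(g)$.

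The main technical obstacle is the explicit verification that the map $t \mapsto (u_a(t) \dot s_a)^{-1} x \s(u_a(t) \dot s_a)$, with $x := g^{-1} b \s(g)$, hits each of the four $\CI$-double cosets in $P_a \dot w P_a$ with the predicted codimension. This reduces, via the Iwahori decomposition of $x$ and the identification $u_a: \BA^1 \xrightarrow{\sim} U_a$, to a rank-one $\mathrm{SL}_2$-type calculation inside $P_a$. The $\BG_m$- and $\BA^1$-bundle structures then follow from Zariski-local triviality of $\pi$, and the mixed characteristic case requires no modification since we work throughout with perfect schemes and $u_a$ identifies $\BA^1$ with $U_a$ up to universal homeomorphism via the embedding $\iota$ of Section~\ref{sec:2.1}.
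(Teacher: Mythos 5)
Your overall plan — reduce to a rank-one calculation via the $\mathbb P^1$-fibration $\pi\colon G(L)/\CI\to G(L)/P_a$ — is a legitimate way to package the classical Deligne--Lusztig reduction, and it is essentially the same mechanism as in G\"ortz--He, who the paper cites for this proposition without reproof. Your Case (1) analysis is correct: the calculation $p^{-1}x\s(p) = \dot s_a^{-1}u_a(-t)\,x\,u_{\s(a)}(\s(t))\s(\dot s_a)$ with the Iwahori--Bruhat product rules does give exactly one $\CI$-double-coset of each relevant length, yielding the universal homeomorphism.

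However, your Case (2) contains a genuine error in the claimed fibre structure. You assert that \emph{each} $\pi$-fibre meeting $X_w(b)$ contains exactly one point of $X_{s_aw}(b)$ \emph{and} exactly one point of $X_{s_aw\s(s_a)}(b)$, with $X_w(b)$ filling the remaining open set. This is false. A direct computation in the fibre (write $y=h^{-1}b\s(h)$ and track $p^{-1}y\s(p)$ as $p$ runs over the big cell $U_a\dot s_a$) shows that each $\pi$-fibre meets \emph{exactly one} of $X_{s_aw}(b)$ or $X_{s_aw\s(s_a)}(b)$, and never both. More precisely: if the fibre contains a point $h\CI\in X_{s_aw}(b)$, then the remaining $\BA^1$ of the fibre splits as $\BG_m\subseteq X_w(b)$ together with a single additional point lying in $X_{w\s(s_a)}(b)$ (not $X_{s_aw\s(s_a)}(b)$); if the fibre contains $h\CI\in X_{s_aw\s(s_a)}(b)$, then the complementary $\BA^1$ lies entirely in $X_w(b)$. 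This asymmetry between the two kinds of fibres is precisely what produces the $\BG_m$-bundle over $X_{s_aw}(b)$ versus the $\BA^1$-bundle over $X_{s_aw\s(s_a)}(b)$; your uniform ``two marked points per fibre'' picture would instead yield $\BG_m$-fibres in both cases, which contradicts the statement being proved. The subsequent definitions of $X_{\mathrm I}$ and $X_{\mathrm{II}}$ and the openness argument are fine, but the fibrewise count needs to be replaced with the case distinction above; otherwise the $\BA^1$-bundle structure on $X_{\mathrm{II}}$ is not established.
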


By iterating this result, one can subdivide every $X_w(b)$ into locally closed pieces, which are iterated fibre bundles over smaller ADLV $X_{w_1}(b),\ldots, X_{w_N}(b)$ with $\mathbb A^1$ and $\mathbb G_m$-fibres. These \enquote{smaller} ADLV are easier to handle.

\begin{definition}\label{def:min-len}
We say that $w\in \tW$ is a \emph{minimal length element}, or is \emph{min-len}, if $\ell(w)\leq \ell(u^{-1} w\s(u))$ for all $u\in \tW$. 
\end{definition}

For $w, w' \in \tW$ and $i \in \tilde \BS$, we write $w \xrightarrow{s_i}_\s w'$ if $w'=s_i w \s(s_i)$ and $\ell(w') \le \ell(w)$. We write $w \to_\s w'$ if there is a sequence $w=w_0, w_1, \ldots, w_n=w'$ of elements in $\tW$ such that for any $k$, $w_{k-1} \xrightarrow{s_i}_\s w_k$ for some $s_i \in \tilde \BS$. We write $w \approx_\s w'$ if $w \to_\s w'$ and $w' \to_\s w$.

\begin{theorem}[{\cite[Theorem~A]{HN14}}]\label{thm:minlen}
For every $w\in \tW$, there exists a minimal length element $w' \in \tW$ such that $w \to_\s w'$.
\end{theorem}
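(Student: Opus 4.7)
The natural approach is induction on $\ell(w)$. The base case $\ell(w)=0$ is trivial, since any length--zero element is automatically min-len. Assume $\ell(w)>0$; if $w$ is already min-len we are done, so suppose by Definition~\ref{def:min-len} that there exists $u\in\tW$ with $\ell(u^{-1}w\sigma(u))<\ell(w)$, and pick one with $\ell(u)$ minimal among such. Fix a reduced expression $u=s_{i_1}\cdots s_{i_k}$, set $u_j=s_{i_1}\cdots s_{i_j}$ and $w_j=u_j^{-1}w\sigma(u_j)$, so that each transition $w_{j-1}\to w_j$ is an elementary $\sigma$--twisted conjugation by the affine simple reflection $s_{i_j}$.

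The heart of the argument is the claim that, after possibly replacing $u$ by another minimal--length length--reducing conjugator with another reduced expression, the sequence of lengths satisfies $\ell(w_0)=\cdots=\ell(w_{k-1})$ with a single strict drop $\ell(w_k)=\ell(w_{k-1})-2$ at the final step. Granting this, every individual step qualifies as an arrow for $\to_\sigma$, so the chain $w=w_0\to_\sigma w_1\to_\sigma\cdots\to_\sigma w_k$ realizes $w\to_\sigma w_k$ with $\ell(w_k)<\ell(w)$; applying the inductive hypothesis to $w_k$ then supplies the required min-len element $w'$.

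To establish the claim, observe first that if $\ell(w_j)<\ell(w)$ for some $j<k$, then $u_j$ would already be a shorter length--reducing conjugator, contradicting the minimality of $\ell(u)$. The genuinely subtle case is when an intermediate length $\ell(w_j)$ strictly exceeds $\ell(w)$. Such excursions must be shown eliminable by rewriting: via a sign analysis of the affine roots that the partial products $u_j$ swap to the negative side, combined with braid relations in $(\tW,\tilde\BS)$, one rearranges the reduced expression so that any length increase is postponed or cancelled against a subsequent decrease. The main obstacle is exactly this combinatorial rearrangement, which ultimately reduces, via the semidirect product structure of $\tW$ and standard parabolic considerations, to the finite Coxeter case handled by the Geck--Pfeiffer theorem on twisted conjugacy classes.
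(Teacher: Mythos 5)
The paper does not contain a proof of this statement; it cites \cite[Theorem~A]{HN14} (He--Nie, \emph{Compositio Math.} 2014) as a known result, so there is no in-text argument to compare your attempt against.

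Your proposal contains a genuine gap at its very centre. The inductive scaffolding is fine: induct on $\ell(w)$, and if $w$ is not min-len find some length-reducing $\sigma$-conjugation and apply the hypothesis. You also correctly observe that for a length-minimal conjugator $u$ with reduced expression $u=s_{i_1}\cdots s_{i_k}$, no intermediate $w_j$ can have $\ell(w_j)<\ell(w)$ for $j<k$. But that only bounds the lengths from below; it does nothing to rule out excursions with $\ell(w_j)>\ell(w)$, and a chain with such an excursion is \emph{not} a $\to_\sigma$-chain. Everything rests on the \enquote{claim} that one can rewrite $u$ so the lengths stay flat until a single drop at the end, and this claim is exactly the content of the theorem. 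Asserting it with a gesture at \enquote{sign analysis of affine roots,} \enquote{braid relations,} and a reduction to Geck--Pfeiffer is not a proof; in particular it is unclear why an increase in length can always be \enquote{postponed or cancelled} by braid moves, and a counterexample to any naive rearrangement heuristic is not ruled out.

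Moreover, the suggested route through Geck--Pfeiffer is not the one He--Nie actually take, and it is unlikely to work as sketched. Geck--Pfeiffer's theorem concerns finite (twisted) Coxeter groups, while $\tW$ is infinite; the \enquote{standard parabolic considerations} you allude to do not trivially reduce the affine problem to finite ones, because the relevant orbits of $\sigma$-conjugation live in $\tW$ itself and need not be confined to a finite standard parabolic. He--Nie's actual argument is structural: they use the decomposition $\tW\cong X_*(T)\rtimes W_0$, the notion of $\sigma$-straight elements and Newton points, and geometric arguments about alcove positions (related to $P$-alcove elements) to locate a \enquote{good} minimal-length representative and show it can be reached by length-non-increasing cyclic shifts. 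None of that machinery appears in your sketch, and without it the claim remains unsupported. As it stands, the proposal reduces the theorem to an unproven assertion of comparable difficulty.
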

Theorem \ref{thm:minlen} together with Proposition~\ref{prop:DL-reduction} means that we can decompose every ADLV $X_w(b)$ in to locally closed pieces, which are iterated fibre bundles over ADLV $X_{w_1}(b),\ldots,X_{w_N}(b)$ such that each $w_i$ is of minimal length. 

\subsection{}\label{sec:2.4}
Let $w \in \tW$. We construct the reduction tree for $w$, which encodes the Deligne-Lusztig reduction for the affine Deligne-Lusztig varieties associated with $w$.

The reduction tree is constructed inductively. 

Suppose that $w$ is min-len, then the reduction tree of $w$ consists of a single vertex $w$ and no edges. 

Suppose that $w$ is not min-len and that a reduction tree is given for any $z \in \tW$ with $\ell(z)<\ell(w)$. By Theorem~\ref{thm:minlen}, there exist $w' \in \tW$ and $a \in \D_{\af}$ with $w \approx_\s w'$ and $s_a w' \s(s_a)<w'$. The reduction tree of $w$ is the graph containing the given reduction tree for $ w' \s(s_a)$, the reduction tree for $s_a w' \s(s_a)$, and the edges $w \lup  w' \s(s_a)$ and $w \lup s_a w' \s(s_a)$. 

Note that the reduction trees of $w$ are not unique. They depend on the choices of $w'$ and $s_a$ in each step of the construction.

Let $\CT$ be a reduction tree of $w$. An {\it end point} of the tree $\CT$ is a vertex $x$ of $\CT$ such that there is no edge of the form $x \lup x'$ in $\CT$. By Theorem~\ref{thm:minlen}, each end point is min-len. A {\it reduction path} in $\CT$ is a path $\underline p: w \lup w_1 \lup \cdots \lup w_n$, where $w_n$ is an end point of $\CT$. The {\it length} $\ell(\underline p)$ is the number of edges in $\underline p$. We also write $\operatorname{end}(\underline p)=w_n$ and $[b]_{\underline p}=  [ \dot{w}_n ] \in B(G)$.

Note that if $x \lup y$, then $\ell(x)-\ell(y) \in \{1, 2\}$. We say that the edge $x \lup y$ is of type I if $\ell(x)-\ell(y)=1$ and  of type II if $\ell(x)-\ell(y)=2$. For any reduction path $\underline p$, we denote by $\ell_{\text{I}}(\underline p)$ the number of type I edges in $\underline p$ and by $\ell_{\text{II}}(\underline p)$ the number of type II edges in $\underline p$. Then $\ell(\underline p)=\ell_{\text{I}}(\underline p)+\ell_{\text{II}}(\underline p)$. 

For any $a_1, a_2 \in \BN$, we say that a scheme $X$ is an {\it iterated fibration} of type $(a_1, a_2)$ over a scheme $Y$ if there exist morphisms $$X=Y_0 \to Y_1 \to \cdots \to Y_{a_1+a_2}=Y$$ such that for any $i$ with $0 \le i<a_1+a_2$, $Y_i$ is a Zariski-locally trivial $\BA^{1}$-bundle or $\BG_m$-bundle over $Y_{i+1}$, and with $\BG_m$ occurring $a_1$ times and $\BA^1$ occurring $a_2$ times.

Combining Proposition~\ref{prop:DL-reduction} with the construction of the reduction trees, we obtain the following result. 

\begin{proposition}\label{prop:dec}
Let $w \in \tW$ and $\CT$ be a reduction tree of $w$. Then, for any $b \in G(L)$, there exists a decomposition $$X_w(b)=\bigsqcup_{\underline p \mathrm{\; is\; a\; reduction\; path\; of\; } \CT,     \operatorname{end}(\ud p)=[b]} X_{\underline p},$$ where $X_{\underline p}$ is a locally closed subscheme of $X_w(b)$, and is $\BJ_b(F)$-equivariant universally homeomorphic to an iterated fibration of type $(\ell_{\text{I}}(\underline p), \ell_{\text{II}}(\underline p))$ over  $X_{\operatorname{end}(\underline p)}(b)$.
\end{proposition}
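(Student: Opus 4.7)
The strategy is to proceed by strong induction on $\ell(w)$, following the recursive construction of the reduction tree $\CT$.

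If $w$ is of minimal length, then $\CT$ consists of the single vertex $w$ with only the trivial reduction path $\ud p: w$ of length $0$. A classical result of He--Nie (cf.\ the theory underlying \cite{HN14} and Theorem~\ref{thm:minlen}) asserts that $X_w(b)=\emptyset$ unless $[\dot w]=[b]$, in which case $X_w(b)$ is tautologically an iterated fibration of type $(0,0)$ over itself, so the claimed decomposition holds.

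Now suppose $w$ is not min-len. By the construction of $\CT$, there exist $w' \in \tW$ and $a \in \D_{\af}$ with $w \approx_\s w'$ and $\ell(s_a w' \s(s_a)) = \ell(w') - 2$, and $\CT$ is assembled from the edges $w \lup w' \s(s_a)$ and $w \lup s_a w' \s(s_a)$ together with reduction trees $\CT_1$, $\CT_2$ of $w' \s(s_a)$ and $s_a w' \s(s_a)$ respectively. Iterated applications of Proposition~\ref{prop:DL-reduction}(1) along the sequence realizing $w \approx_\s w'$ yield a $\BJ_b(F)$-equivariant universal homeomorphism $X_w(b) \cong X_{w'}(b)$. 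Applying Proposition~\ref{prop:DL-reduction}(2) to $w'$ then produces a $\BJ_b(F)$-stable decomposition $X_{w'}(b) = X_I \sqcup X_{II}$, where $X_I$ is open and maps to $X_{s_a w'}(b)$ as the composition of a Zariski-locally trivial $\BG_m$-bundle with a universal homeomorphism, and $X_{II}$ is closed and maps to $X_{s_a w' \s(s_a)}(b)$ as the composition of a Zariski-locally trivial $\BA^1$-bundle with a universal homeomorphism. Since $s_a(w'\s(s_a))\s(s_a) = s_a w'$ and both sides have length $\ell(w')-1$, a further invocation of Proposition~\ref{prop:DL-reduction}(1) yields a $\BJ_b(F)$-equivariant universal homeomorphism $X_{s_a w'}(b) \cong X_{w' \s(s_a)}(b)$.

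By the induction hypothesis, $X_{w'\s(s_a)}(b)$ and $X_{s_a w' \s(s_a)}(b)$ admit the decompositions indexed by reduction paths of $\CT_1$ and $\CT_2$ with the asserted iterated fibration structures. Pulling these back through the $\BG_m$- and $\BA^1$-bundles above yields a decomposition of $X_I \sqcup X_{II}$, and transporting along $X_w(b) \cong X_{w'}(b)$ gives the required decomposition of $X_w(b)$: each reduction path $\ud p$ of $\CT$ factors uniquely either as $w \lup w' \s(s_a)$ (a type I edge) extended by a reduction path $\ud{p'}$ of $\CT_1$, or as $w \lup s_a w' \s(s_a)$ (a type II edge) extended by a reduction path $\ud{p'}$ of $\CT_2$, and the prepended edge contributes precisely one additional $\BG_m$- or $\BA^1$-layer, yielding $(\ell_{\text{I}}(\ud p), \ell_{\text{II}}(\ud p)) = (\ell_{\text{I}}(\ud{p'})+1, \ell_{\text{II}}(\ud{p'}))$ or $(\ell_{\text{I}}(\ud{p'}), \ell_{\text{II}}(\ud{p'})+1)$ respectively. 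The hard analytic content is entirely absorbed into Proposition~\ref{prop:DL-reduction}; the main task here is the bookkeeping of bundle types and the verification that the type (1) universal homeomorphisms, being length-preserving, introduce no spurious $\BG_m$- or $\BA^1$-layers and do not disturb the count $(\ell_{\text{I}},\ell_{\text{II}})$.
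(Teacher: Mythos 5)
Your proof is correct and is exactly the argument the paper has in mind: the paper gives no explicit proof, stating only that the proposition follows by combining Proposition~\ref{prop:DL-reduction} with the recursive construction of the reduction tree, and your induction on $\ell(w)$ fills in precisely those details (including the needed type~(1) bridge $X_{s_a w'}(b)\cong X_{w'\s(s_a)}(b)$, which the paper's tree-construction implicitly requires since the tree child is $w'\s(s_a)$ while Proposition~\ref{prop:DL-reduction}(2) targets $X_{s_aw'}(b)$). The base-case fact that $B(G)_w=\{[\dot w]\}$ for a min-len $w$ is indeed the content of He's theorem on minimal length elements, and your bookkeeping of how each prepended edge contributes one $\BG_m$- or $\BA^1$-layer is correct.
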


For any $w\in\tW$, denote $B(G)_w = \{[b] \in B(G)\mid X_w(b) \ne \emptyset\}$. Then by Proposition \ref{prop:dec}, we have 
$$B(G)_w = \{ [b]_{\ud p}\} \mid \underline p \mathrm{\; is\; a\; reduction\; path\; of\; } \CT;  \text{end}(\ud p) \in [b] \},$$
where $\CT$ is some (or any) reduction tree of the given $w$.

\section{Lifting Deligne-Lusztig Reduction}\label{sec:3}

\subsection{Fundamentals on liftings}

\begin{definition}
Let $Y$ be a locally closed subvariety of $G(L)/\CI$. A \emph{lift} (or a \emph{global section}) of $Y$ is a morphism $\psi:Y\to G(L)$ of (perfect) schemes over $\overline{\BF}_q$ such that $\pi \circ \psi = \id$, where $\pi$ is the natural projection $G(L)\to G(L)/\CI$.
\end{definition}

We give some simple facts about liftings of affine Deligne-Lusztig varieties. 

\begin{lemma}\label{lem:lift}
Let $w \in \tW$ and $b\in G(L)$. Suppose $X_w(b)$ has a lift $\psi:X_w(b) \to G(L)$.
\begin{enumerate}
    \item Let $h \in G(L)$ and $b' = h^{-1}  b \s(h)$. Then 
$g\CI \mapsto h^{-1} \psi(h g\CI)$ is a lift of $X_w(b')$.

\item Let $j \in \BJ_b(F)$. Then $g\CI \to j\psi(j^{-1} g\CI)$ is a lift of $X_w(b)$.

\end{enumerate}
\end{lemma}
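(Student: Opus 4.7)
The plan is, in each part, to verify two things: first, that the assignment makes sense, i.e.\ that the argument of $\psi$ lies in the ADLV $X_w(b)$ on which $\psi$ is defined; and second, that the composition with the projection $\pi:G(L)\to G(L)/\CI$ returns the identity. Algebraicity of the proposed maps is immediate in both cases, since each is a composition of $\psi$ with left multiplication by a fixed element of $G(L)$, which acts as an automorphism of the affine flag variety as a (perfect) scheme over $\ov{\BF}_q$.

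For part (1), I would first observe that left multiplication by $h$ induces a morphism $X_w(b')\to X_w(b)$, $g\CI\mapsto hg\CI$. Indeed, a direct computation gives
\[
(hg)^{-1} b\,\s(hg) \;=\; g^{-1}\bigl(h^{-1} b\,\s(h)\bigr)\s(g) \;=\; g^{-1} b'\,\s(g),
\]
so $g\CI\in X_w(b')$ if and only if $hg\CI\in X_w(b)$. Hence $\psi(hg\CI)$ is defined and lifts $hg\CI$, whence $h^{-1}\psi(hg\CI)\in G(L)$ projects to $g\CI$ as required.

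For part (2), I would use the standard fact that $\BJ_b(F)$ acts on $X_w(b)$ by left multiplication. Concretely, for $j\in\BJ_b(F)$, the identity $j^{-1} b\,\s(j)=b$ rearranges to $j b\,\s(j)^{-1}=b$, and then
\[
(j^{-1}g)^{-1} b\,\s(j^{-1}g) \;=\; g^{-1}\bigl(j b\,\s(j)^{-1}\bigr)\s(g) \;=\; g^{-1} b\,\s(g),
\]
so $g\CI\in X_w(b)$ if and only if $j^{-1}g\CI\in X_w(b)$. Consequently $\psi(j^{-1}g\CI)$ is defined and lifts $j^{-1}g\CI$, so left multiplication by $j$ yields the claimed lift of $X_w(b)$. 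I expect no serious obstacle in either part; both arguments are just bookkeeping with the definitions of $\s$-conjugation, the $\BJ_b(F)$-action, and the lift condition $\pi\circ\psi=\id$.
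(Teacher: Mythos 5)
Your proof is correct and matches the intent of the paper, which omits the argument entirely, noting only that ``the proof is straightforward.'' Your direct verification — checking that $hg\CI\in X_w(b)$ iff $g\CI\in X_w(b')$, and that $j^{-1}g\CI\in X_w(b)$ iff $g\CI\in X_w(b)$, followed by applying $\pi$ — is exactly the bookkeeping the authors had in mind, and you correctly note the algebraicity coming from left multiplication being an automorphism of the affine flag variety. (One small remark: part (2) is in fact the special case $h=j^{-1}$ of part (1), since $j\in\BJ_b(F)$ gives $b'=jb\s(j)^{-1}=b$.)
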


\begin{lemma}\label{lem:liftAffineSchubertCell}
    Let $w\in \tW$. Then the affine Schubert cell $\CI \dot{w} \CI\subseteq G(L)/\CI$ has a lift.
\end{lemma}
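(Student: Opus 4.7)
The plan is to build an explicit lift out of the classical product decomposition of the affine Schubert cell $\CI\dot w\CI/\CI$. First I would fix a reduced expression $w = \tau\cdot s_{i_1}s_{i_2}\cdots s_{i_n}$ in $\tW$ with $n=\ell(w)$, where $\tau$ is a length--zero element and each $s_{i_j}$ is the affine simple reflection associated with an affine simple root $a_{i_j}\in \D_{\af}$. By the compatibility properties of the lifts $\dot{s}_i$ fixed in Section~\ref{sec:2.1}, we then have $\dot w = \dot\tau\cdot\dot s_{i_1}\cdots\dot s_{i_n}$ inside $N_G(T)(L)$.

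Next I would form the Bott--Samelson type morphism of (perfect) schemes
$$
\varphi\colon \BA^n \longrightarrow G(L),\qquad (x_1,\ldots,x_n) \longmapsto \dot\tau\cdot u_{a_{i_1}}(x_1)\dot s_{i_1}\cdot u_{a_{i_2}}(x_2)\dot s_{i_2}\cdots u_{a_{i_n}}(x_n)\dot s_{i_n}.
$$
The crucial input is that the composition $\pi\circ\varphi\colon \BA^n \to G(L)/\CI$ is a scheme--theoretic isomorphism onto the Schubert cell $\CI\dot w\CI/\CI$. This is classical in the theory of affine flag varieties and is proved by induction on $n$: the base case $n=1$ reduces to identifying $\CI\dot s_a\CI/\CI$ with the image of $x\mapsto u_a(x)\dot s_a\CI$, while the inductive step rests on the multiplication rule $\CI\dot s_a\CI\cdot\CI\dot v\CI = \CI\dot s_a\dot v\CI$ (whenever $\ell(s_av)=\ell(v)+1$) combined with the Iwahori factorization with respect to the affine root subgroups.

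Once this is in hand, the lift is simply the composition
$$
\psi \;:=\; \varphi\circ(\pi\circ\varphi)^{-1}\colon\; \CI\dot w\CI/\CI \;\longrightarrow\; G(L),
$$
and $\pi\circ\psi = \id$ holds by construction. I expect the main technicality, rather than a genuine obstacle, to be the verification that $\pi\circ\varphi$ is an isomorphism of (perfect) schemes and not merely a bijection on $\overline{\BF}_q$--points; this is handled uniformly in equal and (perfected) mixed characteristic via the standard scheme--theoretic treatment of affine flag varieties. Although the resulting lift $\psi$ depends on the choice of reduced expression, its existence does not, which is all that the lemma asserts.
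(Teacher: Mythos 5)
Your proposal is correct and is essentially the paper's proof with the induction unrolled: the paper constructs the same factorization one affine simple reflection at a time (base case $\ell(w)=0$, inductive step peeling off $u_a(x)\dot s_a$ on the left), which after full unwinding gives exactly your Bott--Samelson map $\varphi$. One minor caveat: the paper fixes lifts via $\dot w = \lambda(t)\dot z$ and a compatible system on $W_0$, so the strict identity $\dot w = \dot\tau\,\dot s_{i_1}\cdots\dot s_{i_n}$ in $N_G(T)(L)$ is not automatic when the reduced word uses the affine simple reflection; but this is immaterial, since both expressions represent the same double coset and your map $\pi\circ\varphi$ still parametrizes $\CI\dot w\CI/\CI$.
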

\begin{proof}
    Induction on $\ell(w)$. If $\ell(w)=0$, then $\CI \dot{w} \CI/\CI = \dot w\CI/\CI$ is just a point, which has a lift $\dot w\CI/\CI\mapsto \dot w\in G(L)$.

    In the inductive step, consider a simple affine index $i\in \tilde\BS$ with $\ell(s_i w) = \ell(w)-1$. Write the simple affine root as $a_i = (k,\alpha)$ with $k\in\mathbb Z,\alpha\in \Phi$ and consider the affine root subgroup $U_a\subseteq LG$ defined over $\BF_q$. Set $v = s_i w$. Note that each element $g\in \CI \dot w \CI/\CI$ can be written in the form $g = u_a(g_1) \dot s_a g_2$ with uniquely determined $g_2\in \CI \dot v \CI/\CI$ and $g_1\in \overline{\BF}_q$. Moreover, this defines an algebraic map $p_a: \CI \dot w \CI/\CI \to \mathbb A^1$ over $\overline{\BF}_q$, sending $g$ to $g_1$.

    By the inductive assumption, we get a lift $\psi_v : \CI \dot{v} \CI/\CI \to G(L)$. Then we can define a lift $\psi_w : \CI \dot{w} \CI/\CI \to G(L)$ sending $g\CI$ to $u_a(p_a(g\CI)) \psi_v(u_a(p_a(g\CI))^{-1} g\CI)$. This finishes the induction and the proof.
\end{proof}

Let $b \in G(L)$, $w\in \tW$ and $s = s_i$ ($i\in\tilde\BS$).  In this section, we shall study the following three special cases.
\begin{itemize}
    \item \textbf{Case 0.} $\ell( w) = \ell(sw\s(s))$ and $X_{sw\s(s)}(b)$ has a lift.

    \item \textbf{Case 1.} $\ell(sw\s(s)) = \ell(w) -2$ and $X_{sw}(b)$ has a lift.

    \item \textbf{Case 2.} $\ell(sw\s(s)) = \ell(w) -2$ and $X_{sw\s(s)}(b)$ has a lift.
    
\end{itemize}
We shall prove that in these cases, the lift can be ``upgraded'' along reduction path and moreover, in Case 1 and Case 2, the fiber bundles mentioned in Proposition \ref{prop:DL-reduction} are trivial bundles (see Proposition \ref{prop:type0}, \ref{prop:type1}, \ref{prop:type2}).

Following \cite[\S2.4]{GH10}, for $g\CI, g'\CI \in G(L)/\CI , w\in \tW$, we write $g\CI \xrightarrow{w} g'\CI   $ if $g\CI$ and $g'\CI$ have relative position $w$.


\subsection{}
In this subsection, we study case 0. Let $b \in G(L)$, $w\in \tW$ and $s = s_a$ ($a\in\D_{\af}$) such that $\ell(w) = \ell(sw\s(s))$. Assume that $X_{sw\s(s)}(b)$ has a lift $\psi:X_{sw\s(s)}(b) \to G(L)$. Our goal is to construct a lift $\Psi$ of $X_w(b)$. We assume that $w\neq sw\s(s)$.

We first consider the case $sw <w$. We have the following identification
$$X_w(b) = \left\{ g\CI \xrightarrow{w} b\s(g)\CI \right\}  \xlongrightarrow{\sim} 
\left\{\begin{tikzcd}
g\CI \arrow{r}{w} \arrow{d}{s} & b\sigma(g)\CI\arrow{d}{\s(s)} \\
h\CI \arrow{ur}{sw}\arrow{r}{s w\s(s)} & b\s(h)\CI
\end{tikzcd}
\right\}. $$
Define $\g : X_{w}(b) \to X_{s w \s(s)}(b)$ sending $g\CI$ to $h\CI$. This is a universally homeomorphism (see \cite[Corollary 2.5.3]{GH10}). 


Recall that we have the affine root subgroup $U_a$ for all affine root $a$ (see \S\ref{sec:2.1}).

For any $h\CI\in X_{sw\s(s)}(b)$, denote $\tilde{h} = \psi(h\CI)$. 

We define a lift $\Psi$ of $X_w(b)$ as follows. Let $g\CI \in X_w(b)$. Let $h\CI = \g(g\CI)$. Note that $\tilde{h}^{-1}g \in \CI\dot{s}\CI$. Let $z $ be the unique element in $U_a$ such that $\tilde{h}^{-1} g \in z\dot{s} \CI$, or equivalently, $g\CI  = \tilde{h} z \dot{s} \CI$. Define $\Psi(g\CI) = \tilde{h}z\dot{s} \in G(L)$. Then this is a lift of $X_w(b)$.

Next, we consider the case $w\s(s) < w$. We have the following identification
$$X_w(b) = \left\{ g\CI \xrightarrow{w} b\s(g)\CI \right\}  \xlongrightarrow{\sim} 
\left\{\begin{tikzcd}
g\CI \arrow{r}{w} \arrow{dr}{w\s(s)} & b\sigma(g)\CI  \\
h\CI \arrow{u }{s }\arrow{r}{s w\s(s)} & b\s(h)\CI \arrow{u}[swap]{\s(s)}
\end{tikzcd}
\right\}.$$

Unlike the previous case, the map $g\CI\mapsto h\CI$ is not algebraic. Consider the algebraic map $g\CI \mapsto b\s(h)\CI \mapsto b\s(\tilde{h}) \in G(L)$. Note that $g^{-1} b \s(\tilde{h}) \in \CI \dot{w}\s(\dot{s}) \CI$. There is a unique element $z \in \prod_{ a'\in \Phi^{+}_{\af}, w\s(s)(a')\in \Phi^-_{\af}}U_{a'} $ such that $ g^{-1} b \s(\tilde{h}) \in  \CI \dot{w}\s(\dot{s})  z $, or equivalently, $g\CI = b\s(\tilde{h}) z^{-1} \s(\dot{s})^{-1} \dot{w}^{-1} \CI$. Here, the product of $U_{a'}$ is independent of the order. Then define $\Psi(g \CI) = b\s(\tilde{h}) z^{-1} \s(\dot{s})^{-1} \dot{w}^{-1}$. This is a lift of $X_w(b)$.

In summary, we have proved that
\begin{proposition}\label{prop:type0}
Let $w\in \tW$, $b\in G(L)$ such that $X_w(b)\ne\emptyset$. Let $s = s_a$ ($a\in\D_{\af}$) such that $\ell( w) = \ell(s w \s(s))$. Assume that $X_{sw \s(s)}(b)$ has a lift $\psi$. Then $X_w(b)$ has a lift.
\end{proposition}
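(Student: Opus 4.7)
The plan is to address the two subcases $sw<w$ and $w\s(s)<w$, which are exhaustive under the hypothesis $\ell(w)=\ell(sw\s(s))$ combined with $w\neq sw\s(s)$. In both cases, for each $g\CI\in X_w(b)$ the Deligne--Lusztig commutative square produces a unique $h\CI\in X_{sw\s(s)}(b)$ with $h\CI\xrightarrow{s}g\CI$ and $b\s(h)\CI\xrightarrow{\s(s)}b\s(g)\CI$. Setting $\tilde h=\psi(h\CI)$, I will exhibit an explicit formula for $\Psi(g\CI)\in G(L)$, depending algebraically on $g\CI$, with $\Psi(g\CI)\CI=g\CI$.

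In the subcase $sw<w$, the map $g\CI\mapsto h\CI$ is directly algebraic, arising as the position-$s$ predecessor of $g\CI$ in the affine flag variety. The element $\tilde h^{-1}g$ then lies in the simple Schubert cell $\CI\dot s\CI$, and its open stratum $U_a\dot s\CI$ provides a unique $z\in U_a$ with $\tilde h^{-1}g\in z\dot s\CI$. The standard parameterization of this cell makes $z$ algebraic in $g\CI$, and I define $\Psi(g\CI):=\tilde h z\dot s$. Verification of $\Psi(g\CI)\CI=g\CI$ is then immediate.

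In the subcase $w\s(s)<w$, the map $g\CI\mapsto h\CI$ fails to be algebraic: recovering $h$ from $g$ requires inverting the Frobenius on the relevant coordinates. I instead work with the Frobenius-twisted corner $b\s(h)\CI$, which is algebraic in $g\CI$ via the unique $(w\s(s),\s(s))$-gallery from $g\CI$ to $b\s(g)\CI$. Combined with the algebraic composite $b\s\circ\psi$, this produces $b\s(\tilde h)\in G(L)$ as an algebraic function of $g\CI$. The element $g^{-1}b\s(\tilde h)$ lies in the Schubert cell $\CI\dot w\s(\dot s)\CI$, and the unipotent parameterization $\prod_{a'\in\Phi^+_\af,\,w\s(s)(a')\in\Phi^-_\af}U_{a'}$ yields a unique $z$ in this product with $\CI g^{-1}b\s(\tilde h)=\CI\dot w\s(\dot s)z$. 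I then define $\Psi(g\CI):=b\s(\tilde h)z^{-1}\s(\dot s)^{-1}\dot w^{-1}$.

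The main obstacle lies in the second subcase, where the naive strategy of applying $\psi$ to $h\CI$ is unavailable because $h$ is not algebraic in $g$. The resolution is to route the construction through the Frobenius twist, so that only $b\s(\tilde h)$---never $\tilde h$ itself---appears in the final formula; this twisted quantity is algebraic in $g\CI$ because $b\s(h)\CI$ is. Once algebraicity is secured, the identity $\Psi(g\CI)\CI=g\CI$ follows in both subcases by direct cancellation against the chosen $\CI$-coset representatives in the affine Schubert cells.
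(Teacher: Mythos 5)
Your proof is correct and follows exactly the same route as the paper: the same split into the subcases $sw<w$ and $w\s(s)<w$, the same explicit lift $\Psi(g\CI)=\tilde h z\dot s$ in the first subcase, and the same device of routing through the Frobenius-twisted corner $b\s(h)\CI$ to obtain $\Psi(g\CI)=b\s(\tilde h)z^{-1}\s(\dot s)^{-1}\dot w^{-1}$ in the second. The only place where the phrasing is slightly loose is calling $b\s\circ\psi$ an ``algebraic composite'' — $b\s$ alone is $\sigma$-semilinear, not a morphism over $\overline{\BF}_q$; what is actually algebraic is the map $b\s(h)\CI\mapsto b\s(\psi(h\CI))$, being the Frobenius twist of $\psi$ postcomposed with left translation, which is how the paper also treats it.
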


\subsection{}\label{sec:3.3}
In this subsection, we study case 1. Let $ b \in G(L)$, $w\in \tW$ and $s = s_a$ ($a\in\D_{\af}$) such that $\ell(s w \s(s)) = \ell(w) -2$. We identify $X_{w}(b)$ with the set
$$ \left\{\begin{tikzcd}
g\CI \arrow{r}{w} \arrow{d}{s} & b\sigma(g)\CI\arrow{d}{\s(s)} \\
  h\CI \arrow{ur}{sw}  & b\s(h)\CI
\end{tikzcd}   \right\} = X_{\text{I}} \sqcup X_{\text{II}},$$
where $X_{\text{I}}$ is the set of diagrams
\begin{equation}\tag*{(3.1)}\label{eq:3.1}
\begin{tikzcd}
g\CI \arrow{r}{w} \arrow{d}{s} & b\sigma(g)\CI\arrow{d}{\s(s)} \\
  h\CI \arrow{ur}{sw} \arrow{r}{sw}  & b\s(h)\CI
\end{tikzcd}   
\end{equation}
and $X_{\text{II}}$ is the set of diagrams
\begin{equation}\tag*{(3.2)}\label{eq:3.2}
\begin{tikzcd}
g\CI \arrow{r}{w} \arrow{d}{s} & b\sigma(g)\CI\arrow{d}{\s(s)} \\
  h\CI \arrow{ur}{sw} \arrow{r}{sw\s(s)} & b\s(h)\CI.
\end{tikzcd}     
\end{equation}

Assume that $X_{sw }(b)$ has a lift $\psi $. Our goal is to construct a lift $\Psi$ of $X_{\text{I}}$ and prove that the natural projection $X_{\text{I}}\to X_{sw}(b)$ is a trivial fibration (up to a universal homeomorphism).

For any $h\CI\in X_{sw }(b)$, denote $\tilde{h} = \psi(h\CI)$. 

Let $g\CI \in X_{\text{I}}$ be represented by the diagram \ref{eq:3.1}. Note that $\tilde{h}^{-1}g \in \CI\dot{s}\CI$. Let $z$ be the unique element in $U_a$ such that $\tilde{h}^{-1}g \in z\dot{s}\CI $, or equivalently, $g\CI = \tilde{h} z \dot{s}\CI$. Define $\Psi(g\CI) = \tilde{h}z\dot{s} \in G(L)$. This is a lift of $X_{\text{I}}$.

Let $X_{\text{I}}'$ be the set of diagrams
\begin{equation}\tag*{(3.3)}\label{eq:3.3}    
\begin{tikzcd}
  & g'\CI\arrow{d}{\s(s)} \\
  h\CI \arrow{ur}{sw} \arrow{r}{sw}  & b\s(h)\CI
\end{tikzcd}
\end{equation}
The natural projection $X_{\text{I}} \to X_{\text{I}}'$ is a universal homeomorphism. We shall define $\g: X_{\text{I}}' \to X_{sw}(b)\times \BG_m$ and $\g': X_{sw}(b)\times \BG_m \to  X_{\text{I}}' $. Recall that we have the morphism $u_a:\overline{\BF}_q \to U_\a(L)$ (See \S\ref{sec:2.1}).

Note that $\tilde{h}^{-1}b\s(\tilde{h}) \in \CI \dot{s} \dot{w} \CI$. Let $\varphi(\tilde{h})$ be the unique element in $\BA^1$ such that $\tilde{h}^{-1}b\s(\tilde{h})u_{\s(a)}(\varphi(\tilde{h}))\s(\dot{s}) \in \CI s w \s(s) \CI $.

Given a diagram of the form \ref{eq:3.3}. Note that $(b\s(\tilde{h})) ^{-1} g' \in \CI \s(\dot{s})\CI$. Let $y$ be the unique element in $\BA^1$ such that $(b\s(\tilde{h})) ^{-1} g' \in u_a(y) \s(\dot{s}) \CI$, or equivalently, $g'\CI = b\s(\tilde{h}) u_a(y)\s(\dot{s})\CI$. The image of the diagram under $\g$ is defined to be the pair $(h\CI , y - \varphi(\tilde{h}))$.

Let $h\CI \in X_{sw}(b)$ and $x \in \BG_m$. Define $\g'(h\CI,x)$ be the diagram
$$\begin{tikzcd}
 & b \s (\tilde{h}) u_{\s(a)}(x + \varphi(\tilde{h})) \s(\dot{s})  \CI \arrow{d}{\s(s)} \\
  h\CI \arrow{ur}{sw} \arrow{r}{sw } & b\s(h)\CI.
\end{tikzcd}  
$$

Using the definition of $\varphi(\tilde{h})$, it is straightforward to check that $\g$ and $\g'$ are well-defined. Using the relation $g'\CI = b\s(\tilde{h}) u_a(y)\s(\dot{s})\CI$, it is easy to verify that $\g$ and $\g'$ are inverse to each other.

In summary, we have proved the following.

\begin{proposition}\label{prop:type1}
Let $w\in \tW$, $[b]\in B(G)_w$ and $s = s_a$ ($a\in \D_{\af}$) such that $ \ell(w) -2 = \ell( s  w \s(s ) )$. Assume that $X_{sw}(b)$ has a lift. Then $X_{\text{I}}$ has a lift and the natural map $X_{\text{I}} \to X_{sw}(b)$ is a composition of a universal homeomorphism and a trivial $\BG_m$ fibration.
\end{proposition}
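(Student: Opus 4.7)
The plan is to mimic the construction from Case~0 (Proposition~\ref{prop:type0}) to produce the lift $\Psi$, and then to explicitly trivialize the $\BG_m$--bundle structure on $X_{\text{I}} \to X_{sw}(b)$ by exhibiting a ``forbidden value'' inside each fiber. Throughout, the key tool is the Bott--Samuelson style factorization $\CI \dot{s}\CI = U_a\dot{s}\CI$, in which the $U_a$--component depends algebraically on the input.

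To build the lift, I would first use that the natural projection $X_{\text{I}} \to X_{sw}(b)$ sending the diagram~\ref{eq:3.1} to its lower-left vertex $h\CI$ is a morphism -- in fact a universal homeomorphism, arising from the same reduction step that underlies Proposition~\ref{prop:DL-reduction}. Composing with the assumed lift $\psi$ yields an algebraic assignment $g\CI \mapsto \tilde{h} \in G(L)$. The position condition $g\CI \xrightarrow{s} h\CI$ forces $\tilde{h}^{-1}g \in U_a\dot{s}\CI$, and the unique $U_a$--component $z = z(g\CI)$ is algebraic in $g\CI$. Setting $\Psi(g\CI) = \tilde{h}z\dot{s}$ then satisfies $\pi\circ\Psi = \id$.

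For the $\BG_m$--bundle structure, I would replace $X_{\text{I}}$ by the auxiliary variety $X_{\text{I}}'$ of diagrams~\ref{eq:3.3}; the forgetful map $X_{\text{I}} \to X_{\text{I}}'$ is a universal homeomorphism, since $g\CI$ is recovered from the remaining data. Over $h\CI \in X_{sw}(b)$, the condition $h\CI \xrightarrow{\s(s)} g'\CI$ combined with $\CI\s(\dot{s})\CI = U_{\s(a)}\s(\dot{s})\CI$ parametrizes the fiber by a unique $y\in\BA^1$ via $g'\CI = b\s(\tilde{h})u_{\s(a)}(y)\s(\dot{s})\CI$. The restriction to $\BG_m$ appears because there is exactly one value $\varphi(\tilde{h})\in\BA^1$ at which the resulting diagram completes to an element of $X_{sw\s(s)}(b) \subset X_{\text{II}}$ instead of $X_{\text{I}}$; it is characterized by $\tilde{h}^{-1}b\s(\tilde{h})u_{\s(a)}(\varphi(\tilde{h}))\s(\dot{s}) \in \CI sw\s(s)\CI$. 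Shifting by $x = y - \varphi(\tilde{h})$ then gives a global coordinate on the fiber valued in $\BG_m$, yielding the trivialization $\g$ and $\g'$ displayed in the excerpt. That $\g$ and $\g'$ are mutually inverse reduces to the uniqueness of the $U_{\s(a)}$--component in the Bott--Samuelson decomposition.

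The principal obstacle will be the algebraicity and well-definedness of $\varphi$: one must show that the bad value exists, is unique, and varies algebraically with $\tilde{h}$. This amounts to another application of the factorization $\CI sw\CI \cdot \s(\dot{s})$, where the drop in length $\ell(sw\s(s)) = \ell(w)-2$ ensures that the product $\CI sw\CI \s(\dot{s})\CI$ splits as the open piece $\CI w\CI$ together with the lower-dimensional locus $\CI sw\s(s)\CI$, corresponding to precisely one value of the $U_{\s(a)}$--parameter. Once algebraicity of $\varphi$ is established, the remaining checks -- well-definedness of $\g$ and $\g'$, and compatibility with the $\BJ_b(F)$--action via Lemma~\ref{lem:lift} -- reduce to direct manipulations in $G(L)/\CI$ using the root subgroup decomposition of $\CI$.
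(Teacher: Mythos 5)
Your overall plan tracks the paper's proof in Section~\ref{sec:3.3} quite closely -- lift $\Psi$ by pulling back along $g\CI\mapsto h\CI$ and multiplying by the $U_a$--coordinate, pass to the auxiliary variety $X_{\text{I}}'$ of diagrams~\ref{eq:3.3}, parametrize the fiber over $h\CI$ by $y\in\BA^1$ via $g'\CI = b\s(\tilde h)u_{\s(a)}(y)\s(\dot s)\CI$, and shift by the ``bad value'' $\varphi(\tilde h)$ to land in $\BG_m$. That is the right construction. However, there are a few concrete errors worth flagging.

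The most serious is your opening claim that ``the natural projection $X_{\text{I}} \to X_{sw}(b)$ sending the diagram to its lower-left vertex $h\CI$ is \ldots\ in fact a universal homeomorphism.'' This is false and directly contradicts the very statement you are proving: the fiber of $X_{\text{I}}\to X_{sw}(b)$ over $h\CI$ is a $\BG_m$, not a point. You are conflating this map with the genuine universal homeomorphism $X_{\text{I}}\to X_{\text{I}}'$ (forgetting only the upper-left vertex $g\CI$), which you correctly invoke later. Fortunately the lift construction only needs that $g\CI\mapsto h\CI$ is a \emph{morphism} (which follows from the factorization $\CI\dot w\CI = \CI\dot s\CI\cdot\dot s\dot w\CI$ since $\ell(w)=1+\ell(sw)$), so the error does not sink the argument, but as stated it is incorrect.

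Two smaller slips: the relative position $\s(s)$ in diagram~\ref{eq:3.3} is between $g'\CI$ and $b\s(h)\CI$, not between $h\CI$ and $g'\CI$ (between $h\CI$ and $g'\CI$ the position is $sw$); this is what actually produces $(b\s(\tilde h))^{-1}g'\in\CI\s(\dot s)\CI$. And the coset decomposition you invoke for well-definedness of $\varphi$ should read $\CI sw\CI\,\s(\dot s)\,\CI = \CI sw\CI\sqcup\CI sw\s(s)\CI$ (open piece $\CI sw\CI$, closed piece $\CI sw\s(s)\CI$, coming from $sw\s(s)<sw$), not ``$\CI w\CI$ together with $\CI sw\s(s)\CI$'' -- the element $w$ has strictly larger length and cannot appear. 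With these corrections your argument agrees with the paper's.
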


\subsection{}\label{sec:3.4}
In this subsection, we study case 2. Let $b\in G(L)$, $w\in \tW$ and $s = s_a$ ($a\in\D_{\af}$) such that $\ell(s w \s(s)) = \ell(w) -2$. Let $X_{\text{I}}$ and $X_{\text{II}}$ be as in the previous subsection.

Assume that $X_{sw \s(s)}(b)$ has a lift $\psi $. Our goal is to construct a lift $\Psi$ of $X_{\text{II}}$ and prove that the natural projection $X_{\text{II}}\to X_{sw \s(s)}(b)$ is a trivial fibration. We shall define $\g: X_{\text{II}} \to X_{sw \s(s)}(b)\times \BA^1$ and $\g': X_{sw \s(s)}(b)\times \BA^1 \to  X_{\text{II}} $.

For any $h\CI\in X_{sw \s(s) }(b)$, denote $\tilde{h} = \psi(h\CI)$. 

Given a diagram of the form \ref{eq:3.2}. Note that $\tilde{h}^{-1}g \in \CI \dot{s}\CI$. Let $y $ be the unique element in $\BA^1$ such that $\tilde{h}^{-1}g \CI = u_a(y )\dot{s}\CI$, or equivalently, $g \CI = \tilde{h} u_a(y  )\dot{s}\CI$. The image of the diagram under $\g$ is defined to be $ ( h\CI , y)   $. Moreover, the map $\Psi: g\CI \mapsto \tilde{h} u_a(y) \dot{s}  $ is a lift of $X_{\text{II}}$.

Let $h\CI \in X_{sw\s(s)}(b)$ and $x \in \BA^1$. Define $\g'(h\CI,x)$ be the diagram
$$
\begin{tikzcd}
\tilde{h}u_a(x) \dot{s} \CI \arrow{r}{w} \arrow{d}{s} & b \s (\tilde{h}u_a(x) \dot{s}  ) \CI \arrow{d}{\s(s)} \\
  h\CI \arrow{ur}{sw} \arrow{r}{sw\s(s)} & b\s(h)\CI.
\end{tikzcd}  
$$

It is straightforward to check that $\g$ and $\g'$ are well-defined. Using the relation $g \CI =  \tilde{h}  u_a(y) \dot{s} \CI$, it is easy to verify that $\g$ and $\g'$ are inverse to each other.

In summary, we have proved the following.

\begin{proposition}\label{prop:type2}
Let $w\in \tW$, $[b]\in B(G)_w$ and $s = s_a$ ($a\in \D_{\af}$) such that $ \ell(w) -2 = \ell( s  w \s(s ) )$. Assume that $X_{sw \s(s)}(b)$ has a lift. Then $X_{\text{II}}$ has a lift and the natural projection $X_{\text{II}} \to X_{s w \s(s)}(b)$ is a trivial $\BA^1$ fibration.
\end{proposition}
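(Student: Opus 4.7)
The plan is to mirror the construction carried out for Proposition~\ref{prop:type1}, only simplified by the fact that here we target the closed piece $X_{\text{II}}$ and the parameter space is a full $\BA^1$ with no excluded fibre. I will build mutually inverse algebraic maps $\g : X_{\text{II}} \to X_{sw\s(s)}(b) \times \BA^1$ and $\g' : X_{sw\s(s)}(b) \times \BA^1 \to X_{\text{II}}$ directly, and the desired lift $\Psi$ of $X_{\text{II}}$ falls out of the construction of $\g$.

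First I would define $\g$. Given $g\CI \in X_{\text{II}}$ with associated $h\CI \in X_{sw\s(s)}(b)$, set $\tilde h = \psi(h\CI) \in G(L)$. By construction $\tilde h^{-1} g \in \CI \dot s \CI$, and via the standard parametrisation of the affine Schubert cell $\CI \dot s \CI / \CI$ by the affine root subgroup $U_a$, there is a unique $y \in \BA^1$ with $g\CI = \tilde h \, u_a(y) \, \dot s \, \CI$. Send the diagram to $(h\CI, y)$. The assignment $g\CI \mapsto \tilde h \, u_a(y) \, \dot s$ is then a lift $\Psi$ of $X_{\text{II}}$ by inspection, since projecting back to $G(L)/\CI$ recovers $g\CI$.

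Next I would define $\g'$: for $(h\CI, x) \in X_{sw\s(s)}(b) \times \BA^1$, put $g\CI = \tilde h \, u_a(x) \, \dot s \, \CI$ and assemble the diagram with $h\CI$ in the lower-left corner. The only nontrivial point to check is that this diagram is of type \ref{eq:3.2}, i.e.\ that the relative position of $g\CI \to b\s(g)\CI$ equals $w$. This follows from the length hypothesis $\ell(sw\s(s)) = \ell(w) - 2$, which forces both $\ell(s \cdot sw\s(s)) = \ell(sw\s(s)) + 1$ and $\ell(s \cdot sw\s(s) \cdot \s(s)) = \ell(sw\s(s)) + 2$, so the iterated cell-product $\CI \dot s \CI \cdot \CI \dot{sw\s(s)} \CI \cdot \CI \s(\dot s) \CI$ lands cleanly inside $\CI \dot w \CI$ with no Bruhat-order degeneration.

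The verification that $\g \circ \g' = \id$ and $\g' \circ \g = \id$ is then immediate from the uniqueness of the parameter $y$ appearing in the definition of $\g$. Both maps are algebraic, as morphisms of (perfect in the mixed characteristic case) schemes, because $\psi$, $u_a$, and left/right translations in $G(L)$ are. I expect the only subtle point to be the bookkeeping in the length argument confirming that $\g'(h\CI, x)$ lands in $X_{\text{II}}$ rather than in $X_{\text{I}}$; everything else is essentially forced by the definitions.
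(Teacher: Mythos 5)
Your proposal is correct and takes essentially the same approach as the paper: you define the same map $\g$ sending a diagram to $(h\CI, y)$ via the $U_a$-parametrization of $\CI\dot s\CI/\CI$, the same inverse $\g'$ assembling the diagram from $(h\CI, x)$, and the same lift $\Psi$. The only difference is that you spell out a bit more of the Bruhat-cell bookkeeping behind the well-definedness of $\g'$, which the paper leaves as a one-line claim.
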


\section{Main Result}\label{sec:4}
\subsection{}
Let $w \in \tW$ and $[b]\in B(G)_w$. Let $\CT$ be a reduction tree of $w$ (see \ref{sec:2.4}).

We now state the main result of this paper.
\begin{theorem}\label{thm:main}
Let $w\in\tW$, $b \in G(L)$ with $X_w(b)\ne\emptyset$. Let $\mathcal{T}$ be a reduction tree of $w$. Assume that
\begin{enumerate}
    \item there exists a unique path $\ud p$ of $\CT$ such that $\text{end}(\ud p) \in [b]$.
    \item $X_{\mathrm{end}(\ud p)}(b) $ has a lift.
\end{enumerate}
Then $X_w(b)$ is universally homeomorphic to the trivial fiber bundle
$$  X_{\mathrm{end}(\ud p)}(b) \times (\BG_m)^{\ell_{\text{I}}(\ud p)} \times  (\BA^1)  ^{\ell_{\text{II}}(\ud p)},$$
where $\ell_{\text{I}}(\ud p)$ and $\ell_{\text{II}}(\ud p)$ are the number of type I and type II reduction steps in $\ud p$ respectively.
\end{theorem}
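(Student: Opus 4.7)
The strategy is to induct on $\ell(\ud p)$, proving the slightly strengthened claim that $X_w(b)$ also admits a lift in the sense of Section \ref{sec:3}. The strengthening is essential, since Propositions \ref{prop:type0}--\ref{prop:type2} each consume a lift on a smaller affine Deligne--Lusztig variety in order to produce one on the larger. A first observation is that assumption (1), combined with Proposition \ref{prop:dec}, collapses the decomposition of $X_w(b)$ to the single piece $X_{\ud p}$, so $X_w(b) = X_{\ud p}$ as locally closed subschemes of the affine flag variety. The base case $\ell(\ud p) = 0$ is immediate since $w = \operatorname{end}(\ud p)$.

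For the inductive step, write $\ud p = (w \lup w_1 \lup \cdots \lup w_n)$ and let $\ud p' = (w_1 \lup \cdots \lup w_n)$ be its restriction to the subtree $\CT'$ rooted at $w_1$; by construction of $\CT$, this subtree is a reduction tree of $w_1$. Both hypotheses transfer to $(w_1, b, \CT', \ud p')$: assumption (2) because $\operatorname{end}(\ud p') = \operatorname{end}(\ud p)$, and assumption (1) because any competing path in $\CT'$ ending in $[b]$ would extend via the prefix $w \lup w_1$ to a competitor of $\ud p$ in $\CT$. The induction hypothesis therefore yields a lift of $X_{w_1}(b)$ together with the asserted product decomposition.

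It remains to cross the first edge. Recall that it encodes a pair $(w', s_a)$ with $w \approx_\s w'$, $s_a w' \s(s_a) < w'$, and $w_1 \in \{w'\s(s_a),\ s_a w' \s(s_a)\}$ according to whether the edge is of type I or II. The cyclic-shift segment $w \approx_\s w'$ is handled by iterated applications of Proposition \ref{prop:type0}, relating lifts of $X_w(b)$ and $X_{w'}(b)$. In the type I case, a further application of Proposition \ref{prop:type0} (for the cyclic shift $s_a w' \approx_\s w'\s(s_a) = w_1$) transfers the inductive lift from $X_{w_1}(b)$ to $X_{s_a w'}(b)$; Proposition \ref{prop:type1} then supplies a lift of $X_{\text{I}} \subseteq X_{w'}(b)$ and identifies $X_{\text{I}} \to X_{s_a w'}(b)$ with a universal homeomorphism followed by a trivial $\BG_m$-fibration. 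The type II case is parallel, with Proposition \ref{prop:type2} producing a trivial $\BA^1$-fibration $X_{\text{II}} \to X_{w_1}(b)$ and the corresponding lift. In either case the complementary stratum inside $X_{w'}(b)$ vanishes: a nonempty complementary stratum would, via Proposition \ref{prop:dec} applied to the sibling subtree, exhibit a second path in $\CT$ ending in $[b]$, contradicting assumption (1). Hence $X_{w'}(b)$ equals the relevant stratum, and the cyclic-shift chain back to $w$ transports the constructed lift to $X_w(b)$. Combining the new trivial fibration with the product decomposition of $X_{w_1}(b)$ furnished by the induction yields the desired structure, with the counts $\ell_{\text{I}}, \ell_{\text{II}}$ incrementing appropriately across the edge.

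The main obstacle is not computational but conceptual: ensuring that the lifts produced at each recursion step are mutually compatible with both the cyclic shifts and the type I or type II splittings. Propositions \ref{prop:type0}--\ref{prop:type2} have been established precisely to provide this compatibility, so the proof amounts to orchestrating them along $\ud p$.
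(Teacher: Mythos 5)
Your proposal is correct and follows the same strategy as the paper, which simply states that the theorem follows by induction from Propositions~\ref{prop:type0}, \ref{prop:type1} and \ref{prop:type2}. You have supplied the bookkeeping the paper leaves implicit — strengthening the induction to carry the lift, using strong multiplicity one to kill the sibling stratum, and routing the lift through cyclic shifts via Proposition~\ref{prop:type0} — all of which is exactly what a fleshed-out version of the paper's argument would contain.
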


\begin{proof}
Using induction, the theorem follows from Proposition \ref{prop:type0}, \ref{prop:type1} and \ref{prop:type2}.
\end{proof}

We point out that Theorem \ref{thm:main} is independent of the choice of the reduction tree $\CT$. We say $(w,[b],\CT)$ satisfies the strong multiplicity one condition, if there exists a unique path $\ud p$ of $\CT$ such that $\text{end}(\ud p) \in [b]$. Using the theory of class polynomial (cf. \cite[Theorem 6.7]{HN14}), one can prove that $(w,[b],\CT)$ satisfies the strong multiplicity one condition for some reduction tree $\CT$ of $w$, if and only if $(w,[b],\CT')$ satisfies the strong multiplicity one condition for any reduction tree $\CT'$ of $w$. Moreover, one can prove that the number $\ell_{\text{I}}(\ud p)$ and $\ell_{\text{I}}(\ud p)$ in Theorem \ref{thm:main} is independent of the choice of the reduction tree.

\section{Geometric Coxeter Type Elements}\label{sec:5}

\subsection{}\label{sec:5.1}
\begin{definition}
Let $(\CW,\CS)$ be a Coxeter system together with a length preserving automorphism $\d$. 
\begin{enumerate}
    \item An element $c\in \CW$ is called a \textit{$\d$-Coxeter element}, if $c$ is a product of simple reflections, one from each $\d$-orbit of $\CS$.

    \item An element $c\in \CW$ is called a \textit{partial $\d$-Coxeter element}, if $c$ is a product of simple reflections, at most one from each $\d$-orbit of $\CS$.

\end{enumerate}
\end{definition}

Extend the length function $\ell$ on $\tW$ to $\tW\rtimes \<\s\>$ naturally with $\ell(\s)=0$. An element $w\in \tW$ is call $\s$-straight, if $\ell( (w\s)^k ) = k\ell(w )$ for all $k \in \BZ_{>0}$, or equivalently, $(\CI w \s \CI )^k = \CI (w \s)^k \CI\subseteq G(L)\rtimes\<\s\>$.  

For a subset $K\subseteq \tilde{\BS}$, denote by ${}^{K}\tW^{\s(K)}$ the set of minimal representatives of double cosets. For $x\in \tW$, we write $x\s(K) = K$ if $x \in {}^{K}\tW^{\s(K)}$ and $\{x\s(s_i)x^{-1} \mid i \in K\} = \{s_j \mid j\in K\} $. The following important result is proved in \cite{HN14}. 
\begin{theorem}[{\cite[Theorems 3.4]{HN14}}]
Let $w\in \tW$. Then there exists $K \subseteq \tilde{\BS}$ with $W_K$ finite, a $\s$-straight element $x \in \tW$ with $x \in {}^{K}\tW^{\s(K)}$, $x\s(K) =K$ and $u \in W_K$ such that $w \to_\s ux$ and $ux$ is a minimal length element.
\end{theorem}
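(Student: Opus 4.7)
My plan is to first reduce to the min-len case via Theorem~\ref{thm:minlen}, and then to give a structural analysis of min-len elements.

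By Theorem~\ref{thm:minlen}, we may pick a min-len $w_0 \in \tW$ with $w \to_\s w_0$. Since $\to_\s$ is transitive, it suffices to prove the conclusion with $w_0$ in place of $w$, so I will assume $w$ itself is min-len.

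For a min-len $w = t^\l z$, I would introduce the Newton vector $\bar\nu_w \in X_*(T) \otimes_\BZ \BQ$ (essentially the average translation of $(w\s)^N$ for any sufficiently divisible $N$) and let $J_w = \{i \in \BS_0 : \<\bar\nu_w, \a_i\> = 0\}$, which indexes the finite simple roots orthogonal to $\bar\nu_w$. The plan is to establish, by induction on the defect $\ell(w) - \<\bar\nu_w, 2\rho\>$, the existence of an element $\approx_\s$-equivalent to $w$ of the form $ux$, where $x$ is $\s$-straight and $u$ is a partial $(\Ad(x)\circ\s)$-Coxeter element in a suitable finite Weyl subgroup $W_K \subseteq \tW$ attached to $J_w$. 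The base case defect $=0$ corresponds to $\s$-straight $w$, in which case we take $K=\emptyset$, $u=1$, and $x=w$, and all stated conditions hold trivially.

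The inductive step would proceed by locating an affine simple reflection $s=s_a$ such that the cyclic shift operation $s(\cdot)\s(s)$ either keeps $w$ min-len while allowing us to peel off $s$ as a first factor of $u$, or else forces $\ell(sw\s(s))<\ell(w)$ (which is excluded by min-len). Iterating on the smaller residue produces the required decomposition $ux$, and a final invocation of Theorem~\ref{thm:minlen} upgrades the resulting $\approx_\s$ chain to a $\to_\s$-chain ending at a min-len representative. The main obstacle will be enforcing the three compatibility conditions on $K$ and $x$ simultaneously: the finiteness of $W_K$, the double-coset minimality $x \in {}^K \tW^{\s(K)}$, and the self-stability $x\s(K) = K$. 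These amount to a combinatorial problem about the $(z\s)$-orbit structure on $\BS_0$ and its interaction with the translation $\l$; I expect that the partial conjugation method of He, combined with the theory of $P$-alcove elements for the Levi corresponding to $J_w$, will provide the right framework to manipulate $w$ within its $\to_\s$-class while keeping all three constraints simultaneously under control.
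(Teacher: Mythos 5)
This statement is cited from [HN14, Theorem~3.4]; the paper does not reprove it, so there is no internal argument to compare with, and you are in effect attempting to reprove a substantial He--Nie result from scratch. The central flaw in your sketch is an overclaim: you propose to produce $u$ as a \emph{partial} $(\Ad(x)\circ\sigma)$-Coxeter element of $W_K$, whereas the theorem asserts only $u\in W_K$, and the stronger statement is false in general. For a min-len $w$ written as $ux$ with $x$ a $\sigma$-straight element of ${}^K\tW^{\sigma(K)}$ and $u\in W_K$, one has $\ell(w)=\ell(u)+\ell(x)$ and $\ell(x)=\langle\bar\nu_w,2\rho\rangle$, so $\ell(u)=\ell(w)-\langle\bar\nu_w,2\rho\rangle$ is forced by $w$ and has no a priori bound, while a partial Coxeter element of $W_K$ has length at most the number of $(\Ad(x)\circ\sigma)$-orbits of simple reflections in $K$, a uniformly bounded quantity. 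If your strengthened claim held, every minimal length element would be of minimal Coxeter type in the sense of Definition~\ref{def:min-cox}, and Section~\ref{sec:5} of the paper would have nothing to single out.

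Beyond the overclaim, the inductive step does not run as written. ``Peel off $s$ as a first factor of $u$'' presupposes the very decomposition $w=ux$ you are trying to construct; for a min-len element the case $\ell(sw\sigma(s))>\ell(w)$ is silently dropped even though it is the generic one (only $<$ is excluded by minimality); and the ``final invocation'' of Theorem~\ref{thm:minlen} is redundant once you have already passed to a min-len representative and confined yourself to $\approx_\sigma$-moves, which preserve length. Most importantly, the genuinely hard part of the statement --- arranging $W_K$ finite, $x\in{}^K\tW^{\sigma(K)}$, and $x\sigma(K)=K$ simultaneously while staying inside a $\to_\sigma$-class --- is exactly the content of He--Nie's argument, and you defer it to ``I expect that the partial conjugation method ... will provide the right framework.'' That expectation is not a mechanism; the proposal contains no concrete step that would actually verify these three conditions.
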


\begin{definition}\label{def:min-cox}
Let $w\in \tW$ be a minimal length element. We say $w$ is a \textit{minimal Coxeter type element}, if there exists $K \subseteq \tilde{\BS}$ with $W_K$ finite, a $\s$-straight element $x \in \tW$ with $x \in {}^{K}\tW^{\s(K)}$ and $x\s(K) =K$ and a $\Ad(x)\circ\s$-Coxeter element $c_K \in W_K$ such that $w \approx_\sigma c_Kx$.
\footnote{
One can also use the theory of $P$-alcove elements developed in \cite[\S 3]{GHKR10}, \cite[\S3.3]{GHN1} and \cite[Lemma 5.15]{SSY} to formulate the notion of minimal Coxeter type elements. But it will be more complicated and we do not use this approach. }
\end{definition}

Here are some simple examples.
\begin{example}\label{ex:geo-cox}
\begin{enumerate}
\item If $w$ is itself a $\s$-Coxeter in some $\s$-stable finite $W_K$, then $w$ is automatically of minimal Coxeter type. Indeed, take $x = 1$ and $c_K = w$ in Definition \ref{def:min-cox}.

\item A length-zero element $\t$ in $\tW$ is of minimal Coxeter type. Indeed, take $K = \emptyset$, $c_K =1$ and $x = \t$ in Definition \ref{def:min-cox}.

\item In the case of type $A_5$. $w=s_4 \t_3 = \t_3s_1$ is a minimal Coxeter type element. Indeed, take $x = \t_3$, $K = \{1,4\}$, $c_K =s_4 $ in Definition \ref{def:min-cox}.

\item In the case of type $C_2$. $w=s_1 \tau_2$ is a minimal Coxeter type element. Indeed, take $x = \t_2$, $K = \{1 \}$, $c_K =s_1 $ in Definition \ref{def:min-cox}.

\item In the case of type ${}^{2}A_4$. $w=s_1 \t_1 = \t_1s_0$ is a minimal Coxeter type element. Indeed, take $x = \t_1$, $K = \{1,0\}$, $c_K =s_1 $ in Definition \ref{def:min-cox}.

\end{enumerate}
\end{example}

\begin{definition}\label{def:geo-cox}
We say $w\in\tW$ is of \textit{geometric Coxeter type} if for some (or any) reduction tree $\CT$ of $w$,
\begin{enumerate}
\item \text{strong multiplicity one} holds, i.e., for any $[b]\in B(G)_w$, there is a unique path $\ud p$ of $\CT$ such that $\mathrm{end}(\ud p)\in [b]$;
\item any endpoint $e$ of $\CT$ is a minimal Coxeter type element.
\end{enumerate}

\end{definition}

Elements of the form $t^{\mu}c \in {}^{\BS_0}\tW$, where $\mu\in X_*(T)^+$ and $c$ a partial $\s$-Coxeter element have been studied extensively in \cite{HNY1}. By \cite[Theorem 2.6]{HNY1}, such elements $t^{\mu}c$ are of geometric Coxeter type.

In \cite{SSY}, the authors generalize the elements $t^{\mu}c$ to ``positive Coxeter type" elements. By \cite[Theorem 5.7]{SSY}, such elements are of geometric Coxeter type.

The notion of geometric Coxeter type elements is broader than the above two notions. Indeed, in Example \ref{ex:geo-cox} (4), the classical part of $w$ is $s_1s_2s_1s_2$, which is not even a $\s$-conjugate of a partial $\s$-Coxeter element.\footnote{This happens when $K$ is not a very special subset of $J_{\af}$ with respect to $\tau_J$. (cf. \cite[\S5.3]{SSY})}

The following special case is of independent interest. 
\begin{theorem}[{\cite[Theorems 5.12]{SSY}}]
Let $w \in \tW$ be a minimal length element. Suppose the classical part of $w$ is $\s$-conjugate to a partial $\s$-Coxeter element. Then $w$ is of minimal Coxeter type. In particular, in the case of type (split) $A_n$, all minimal element are minimal Coxeter type elements.
\end{theorem}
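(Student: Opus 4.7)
The plan is to start from the Hu--Nie normal form for minimal length elements proved in \cite{HN14}: since $w$ is min-len, we have $w \approx_\s ux$ for some $K \subseteq \tilde\BS$ with $W_K$ finite, a $\s$-straight element $x \in {}^K\tW^{\s(K)}$ satisfying $x\s(K) = K$, and $u \in W_K$. To show $w$ is of minimal Coxeter type, it then suffices to show that after further cyclic shifts inside $W_K$ (which fix $x$ and act on $u$ through the automorphism $\Ad(x)\circ\s$), the element $u$ can be brought to an $\Ad(x)\circ\s$-Coxeter element of $W_K$.

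My first step is to verify that the classical part hypothesis is preserved by the cyclic shifts $\approx_\s$. Under the projection $\pi\colon \tW = X_*(T) \rtimes W_0 \to W_0$, an elementary shift $w \mapsto s_i w \s(s_i)$ with $s_i \in \tilde\BS$ descends to ordinary $\s$-conjugation in $W_0$ by $\pi(s_i)$, using $\s(\pi(s_i)) = \pi(\s(s_i))$. Hence the $\s$-twisted $W_0$-conjugacy class of the classical part $\pi(w)$ is invariant under $\approx_\s$, and in particular $\pi(ux) = \pi(u)\pi(x)$ remains $\s$-conjugate to a partial $\s$-Coxeter element of $W_0$.

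The central and hardest step is to extract structural information about $u$ from this. Since $x$ is $\s$-straight and stabilizes $K$, the projection $\pi(x)$ normalizes the sub-root-system $\Phi_K$ and, up to an inner shift in $W_K$, behaves as a Coxeter-like element on the complementary simple reflections. I would write a reduced partial $\s$-Coxeter expression for $\pi(u)\pi(x)$ and match the length identity $\ell(w) = \ell(u) + \ell(x)$ against that expression; the goal is to separate the reflections coming from $u$ from those contributed by $\pi(x)$ and force the reflections of $u$ to pick out exactly one representative from each $\Ad(x)\circ\s$-orbit on the simple reflections of $W_K$. That is precisely the $\Ad(x)\circ\s$-Coxeter condition inside $W_K$. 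I expect the delicate bookkeeping between two partial-Coxeter structures --- one inherited from $W_0$, one needed inside $W_K$ --- to be the principal obstacle, since one must verify that no simple reflection of $W_K$ is ``hidden'' in the straight piece $x$ and that the $\s$-orbit structure on the simple reflections of $W_0$ restricts correctly to the $\Ad(x)\circ\s$-orbit structure on the simple reflections of $W_K$.

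For the split type $A_n$ consequence, every conjugacy class of $W_0 = S_{n+1}$ is parametrized by a partition and admits as representative the product of disjoint standard cycles on consecutive indices. Such a representative is a product of consecutive simple transpositions drawn from disjoint connected sub-diagrams of the $A_n$ Dynkin diagram, which is visibly a partial Coxeter element. Hence the classical part hypothesis holds automatically for every $w \in \tW$ in split type $A$, and the main claim then upgrades every minimal length element to a minimal Coxeter type element.
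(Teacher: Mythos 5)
The paper does not prove this theorem; it is quoted verbatim from \cite[Theorem 5.12]{SSY} and the authors explicitly state that they will not use it further. So there is no in-paper proof to compare against, and your proposal must stand on its own.

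Your framing is reasonable and the easy parts are correct. The reduction to the Hu--Nie normal form $w \approx_\s ux$ from \cite[Theorem 3.4]{HN14} is the right starting point, and the observation that an elementary cyclic shift $w \mapsto s_a w \s(s_a)$ with $s_a \in \tilde\BS$ descends to ordinary $\s$-conjugation of classical parts in $W_0$ (so the hypothesis is preserved along $\approx_\s$) is correct. The split type $A_n$ deduction is also fine: every conjugacy class of $S_{n+1}$ contains a product of disjoint cycles on consecutive indices, hence a partial Coxeter element.

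However, the central step is a genuine gap, and you acknowledge it yourself: you never actually show that $u$ is (up to $\Ad(x)\circ\s$-conjugation in $W_K$) an $\Ad(x)\circ\s$-Coxeter element of $W_K$. Your plan --- write a reduced partial $\s$-Coxeter expression for $\pi(u)\pi(x)$ and ``match'' it against $\ell(w)=\ell(u)+\ell(x)$ --- does not obviously close. Two concrete obstructions: (a) the length identity $\ell(w)=\ell(u)+\ell(x)$ lives in $\tW$, whereas the partial $\s$-Coxeter hypothesis is a statement about $W_0$-conjugacy and $W_0$-reflection length; there is no direct comparison between the two without a bridging lemma. (b) The subgroup $W_K$ is generated by a spherical subset $K\subseteq\tilde\BS$ of \emph{affine} simple reflections, so the simple reflections of $W_K$ need not map to simple reflections of $W_0$ under the classical projection; a partial $\s$-Coxeter structure on $\pi(w)$ in $W_0$ does not in any obvious way restrict to the $\Ad(x)\circ\s$-orbit structure on $K$ that you need. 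The missing ingredient is precisely the kind of additivity statement the present paper proves independently in Lemma~\ref{lem:reflectionLength}, namely $\ell_{R,\sigma}(\cl\, w) = \ell_{R,\sigma}(\cl\, x) + \ell_{R,\Ad(x)\circ\sigma}(u)$, combined with the characterization in Proposition~\ref{prop:mctCharacterization} that minimal Coxeter type is equivalent to $\ell(u)=\ell_{R,\Ad(x)\circ\sigma}(u)$. Without some analogue of these facts, the ``delicate bookkeeping'' you flag cannot be carried through, and the proof remains unfinished.
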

We will not use this theorem in the rest of the paper.

\subsection{}\label{sec:5.2} In this section, we study structure of affine Deligne-Lusztig variety $X_w(b)$ for minimal Coxeter type element $w$.

Let $w = c_K x$ be a minimal length element. Here, $K$ is a subset of $\tilde{\BS}$, $W_K$ finite, $x$ is $\s$-straight, $x \in {}^{K}\tW^{\s(K)}$, $x\s(K) =K$ and $c_K$ is a $\Ad(x)\circ\s$-Coxeter element in $W_K$. Let $b = \dot{x}$, then $B(G)_w$ contains only $[b]$, the $\s$-conjugacy class containing $\dot{x}$.

Let $\CK$ be the standard parahoric subgroup of $G(L)$ corresponding to $K$. Note that $\CK$ is $\Ad(\dot{x})\circ\s$-stable. Let $\bar{\CK}$ be the reductive quotient of $\CK$ and let $\bar{\CI}$ be the image of $\CI$ in $\bar{\CK}$. For $h\in \CK$, denote by $\bar{h}$ the image of $h$ in $\bar{\CK}$. Since $K$ is $\Ad(\t_J)\circ\s$-stable, the map $\bar{\s}: \bar{h}\mapsto \overline{\dot{\t}_J \s(h) \dot{\t}_J^{-1}}$ is a Frobenius morphism on $\bar{\CK}$. Note that $\bar{\CI}$ is a $\bar{\s}$-stable Borel subgroup of $\bar{\CK}$. Define
\begin{align*}
    X_w^{\CK}(b) &= \{ g \in \CK/\CI \mid g^{-1} b \s(g) \in \CI \dw \CI\},\\
    X_{c_K}^{\bar{\CK}} &= \{ \bar{h} \in \bar{\CK}/\bar{\CI} \mid \bar{h}^{-1}\bar{\s} (\bar{h})   \in \bar{\CI} \dot{c}_K \bar{\CI}\}.
\end{align*}
Recall that $w = c_Kx$ and $b = \dx$. By the proof of \cite[Theorem 4.8]{He14}, we have
\begin{enumerate}[(a)]
    \item the multiplication map induces a bijection 
    \begin{align*}\tag*{(5.1)}\label{eq:5.1}
    \BJ_b(F)\times_{\BJ_b(F) \cap \CK} X_w^{\CK}(b) \xlongrightarrow{\sim}  X_w(b),
    \end{align*}
    
    \item the reduction map induces an isomorphism 
    \begin{align*}\tag*{(5.2)}\label{eq:5.2}
    X_w^{\CK}(b) \xlongrightarrow{\sim} X_{c_K}^{\bar{\CK}}.
    \end{align*}
\end{enumerate}
Note that $X_{c_K}^{\bar{\CK}}$ is a classical Deligne-Lusztig variety of the reductive group $\bar{\CK}$ corresponding to the $\bar{\s}$-Coxeter element $c_K$. In particular, all irreducible components of $X_w(b)$ are in a single $\BJ_b(F)$-orbit, and each irreducible component is isomorphic to a classical Deligne-Lusztig variety of Coxeter type, which is known to be affine.

Let $w_{0,K}$ be the longest element in the finite Weyl group $W_K$. By a classical result of Lusztig \cite[Theorem 2.6]{Lusz76}, we get $X_{c_K}^{\bar{\CK}}\subseteq \bar{\CI} w_{0,K} \bar{\CI}/\bar{\CI}$. Hence $X_{c_K}^{\CK}\subseteq \CI \dot{w}_{0,K} \CI/\CI$. By Lemma~\ref{lem:liftAffineSchubertCell}, we know that all affine Schubert cells are liftable, hence $X_{c_K}^{\CK}$ has a lift $\psi_{\CK}$.

We now extend the lift $\psi_{\CK}$ to a lift $\psi$ of $X_w(b)$ via \ref{eq:5.1} as follows. Fix a representative $j$ for each $[j] \in \BJ_b(F)/ \BJ_b(F) \cap \CK$. Then, \ref{eq:5.1} implies 
$$X_w(b) = \bigsqcup_{[j]\in \BJ_b(F)/ \BJ_b(F)\cap \CK} jX_w^{\CK}(b).$$ 
For $g\CI \in j X_w^{\CK}(b)$ for some $[j]\in \BJ_b(F)/ \BJ_b(F)\cap \CK$, define $\psi( g \CI) = j \psi_{\CK}( j^{-1}g\CI)$. The definition of the lift $\psi$ depends on the choices of the representatives $j$ (so our proof uses the axiom of choice).

By Lemma \ref{lem:lift} (1), for any $b'$ with $X_w(b')\ne\emptyset$, the variety $X_w(b')$ has a lift.

\subsection{} The main result of this section is the following.
\begin{theorem}\label{thm:geo-cox}
Let $w\in \tW$ be a geometric Coxeter type element and $b\in G(L)$ with $X_w(b) \ne \emptyset$. Then 
\begin{enumerate}
    \item all irreducible components of $X_w(b)$ lie in a single $\BJ_b(F)$-orbit;

    \item each irreducible component of $X_w(b)$ is universally homeomorphic to the trivial bundle
$$X'\times (\BG_m)^{\ell_{\text{I}}(\ud p)} \times (\BA^1) ^{\ell_{\text{II}}(\ud p)},$$
where $X'$ is a classical Deligne-Lusztig variety of Coxeter type, $\ell_{\text{I}}(\ud p)$ and $\ell_{\text{II}}(\ud p)$ are as in Theorem \ref{thm:main}.
\end{enumerate}
\end{theorem}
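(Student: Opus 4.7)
The plan is to deduce the theorem by combining our main Theorem \ref{thm:main} with the explicit geometric description of $X_e(b)$ for a minimal Coxeter type element $e$ given in Section \ref{sec:5.2}, together with the classical results of Lusztig. First I would fix a reduction tree $\CT$ of $w$. Since $w$ is of geometric Coxeter type, strong multiplicity one holds: there is a unique reduction path $\ud p$ of $\CT$ with $\mathrm{end}(\ud p) \in [b]$. Write $e = \mathrm{end}(\ud p)$. By Definition \ref{def:geo-cox}, $e$ is of minimal Coxeter type, so $e \approx_\sigma c_K x$ for some $\sigma$-straight $x \in {}^{K}\tW^{\s(K)}$ with $x\s(K)=K$ and some $\Ad(x)\circ\s$-Coxeter element $c_K \in W_K$. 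Since $e$ is a minimal length element, $B(G)_e = \{[\dot e]\} = \{[b]\}$.

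Next I would verify that $X_e(b)$ has a lift. By the cyclic-shift invariance $e \approx_\sigma c_K x$, it suffices to construct a lift for $X_{c_K x}(\dx)$. The construction in Section \ref{sec:5.2} does exactly this: the bijection \ref{eq:5.1} reduces us to producing a lift of $X_{c_K x}^{\CK}(\dx)$; the reduction isomorphism \ref{eq:5.2} identifies this with the classical Coxeter Deligne--Lusztig variety $X_{c_K}^{\bar\CK}$, which by \cite[Corollary 2.5]{Lusz76} lies in a single affine Schubert cell $\bar\CI w_{0,K}\bar\CI/\bar\CI$; and then Lemma \ref{lem:liftAffineSchubertCell} provides the lift, which is extended $\BJ_b(F)$-equivariantly (up to a choice of coset representatives) via \ref{eq:5.1}. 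Finally Lemma \ref{lem:lift}(1) transports this lift to any $b$ with $X_e(b) \neq \emptyset$. At this point, both hypotheses of Theorem \ref{thm:main} are satisfied, so $X_w(b)$ is universally homeomorphic to the trivial fiber bundle $X_e(b) \times (\BG_m)^{\ell_{\text{I}}(\ud p)} \times (\BA^1)^{\ell_{\text{II}}(\ud p)}$.

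To conclude, I would analyze the geometry of $X_e(b)$ using \ref{eq:5.1} and \ref{eq:5.2}. After choosing $b$ inside the $\sigma$-conjugacy class so that $b = \dx$, we obtain $X_e(b) \cong \BJ_b(F) \times_{\BJ_b(F) \cap \CK} X_{c_K}^{\bar\CK}$. By Lusztig's classical result \cite[Theorem 2.6]{Lusz76}, $X_{c_K}^{\bar\CK}$ is pure of dimension $\ell(c_K)$ and its irreducible components are permuted simply transitively by an appropriate finite subgroup of $\bar\CK^{\bar\sigma}$; in particular they all lie in a single $\bar\CK^{\bar\sigma}$-orbit, and each irreducible component $X'$ is a classical Deligne--Lusztig variety of Coxeter type in a Levi of $\bar\CK$. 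Combining this with the transitivity of $\BJ_b(F)$ on the $\CK$-cosets in \ref{eq:5.1} shows that all irreducible components of $X_e(b)$ lie in a single $\BJ_b(F)$-orbit, each isomorphic to $X'$. Multiplying by the trivial $(\BG_m)^{\ell_{\text{I}}(\ud p)} \times (\BA^1)^{\ell_{\text{II}}(\ud p)}$ factor preserves the orbit structure of irreducible components, yielding both (1) and (2).

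The main obstacle I anticipate is the compatibility of the trivialization of the fiber bundle in Theorem \ref{thm:main} with the $\BJ_b(F)$-action: one has to ensure that the universal homeomorphism with the trivial bundle $X_e(b)\times (\BG_m)^{\ell_{\text{I}}} \times (\BA^1)^{\ell_{\text{II}}}$ respects $\BJ_b(F)$-orbits of irreducible components, so that the transitivity statement at the level of $X_e(b)$ transfers to $X_w(b)$. This is implicit in the inductive construction of the lift via Propositions \ref{prop:type0}, \ref{prop:type1}, \ref{prop:type2}, but I would make the equivariance explicit by tracking the $\BJ_b(F)$-action through the case-by-case arguments of Section \ref{sec:3}, using Lemma \ref{lem:lift}(2) to absorb the dependence on the chosen coset representatives.
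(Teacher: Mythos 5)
Your argument is correct and follows the same route as the paper's proof: check both hypotheses of Theorem~\ref{thm:main} via Definition~\ref{def:geo-cox}, obtain the lift of the endpoint from Section~\ref{sec:5.2} together with Proposition~\ref{prop:type0} (needed to move the lift along the cyclic shift $e \approx_\sigma c_K x$), and read off the structure of $X_{\mathrm{end}(\ud p)}(b)$ from~\ref{eq:5.1}, \ref{eq:5.2} and Lusztig's classical analysis. Regarding the concern you raise in the last paragraph: you do not need to re-examine Section~\ref{sec:3} to track $\BJ_b(F)$-equivariance of the trivialization. Proposition~\ref{prop:dec} already furnishes a $\BJ_b(F)$-equivariant iterated fibration $X_w(b)\to X_{\mathrm{end}(\ud p)}(b)$, which by itself yields a $\BJ_b(F)$-equivariant bijection on irreducible components and hence transfers the single-orbit statement from $X_{\mathrm{end}(\ud p)}(b)$ to $X_w(b)$; the (not necessarily equivariant) trivialization coming from Theorem~\ref{thm:main} is then used only to identify the isomorphism type of each component, which is all that part (2) requires.
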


\begin{proof}
By Definition \ref{def:geo-cox} (1), condition (1) of Theorem \ref{thm:main} holds. By Definition \ref{def:geo-cox} (2), Proposition \ref{prop:type0} and the results in \S\ref{sec:5.2}, condition (2) of Theorem \ref{thm:main} also holds. Then the theorem following from Theorem \ref{thm:main}.
\end{proof}

\section{The Purity Property}\label{sec:6}
\subsection{}
The purity of the Newton stratification is an important geometric property. For us, it provides us with information about the set $B(G)_w$ and a lower bound of the dimension of $X_w(b)$ for $w\in \tW$.

By \cite[Theorem 7.4 (iii)]{Chai}, the poset
$B(G)$ is ranked, i.e., for any $[b_1], [b_2] \in B(G)$ such that $[b_1] \le [b_2]$, any two maximal chains from $[b_1]$ to $[b_2]$ in $B(G)$ have the same length. We denote the common length by $\ell([b_1], [b_2])$.

For any $[b]\in B(G)$, let $\nu(b)$ be the Newton vector of $[b]$ (cf. \cite{Kot85}). The defect of $[b]$ is defined as
\begin{align*}
    \de(b) = \rank_F(G) - \rank_F (\BJ_b).
\end{align*}
By \cite[Theorem 7.4 (iv)]{Chai}, \cite[Proposition 3.11]{Ham15} and \cite[Theorem 3.4]{Vie20}, we have 
\begin{align*}
\ell([b_1],[b_2]) = \<\nu(b_2)-\nu(b_1),\rho\>+\frac{1}{2}\de(b_1)-\frac{1}{2}\de(b_2).
\end{align*}

\begin{definition}[{cf. \cite[\S 2.1]{HNY2}}]
    For $[b_1],[b_2]\in B(G)$ with $[b_1]\leq [b_2]$, we define the \emph{essential gap} to be
\begin{align*}
    \text{Ess.-gap.}([b_1], [b_2]) &= \langle \nu(b_2) - \nu(b_1),\rho\rangle -\frac 12 \de(b_1) + \frac 12\de(b_2) \\ 
    &= \ell([b_1], [b_2]) - \de(b_1) +\de(b_2).
\end{align*}
\end{definition}

Let $w \in \tW$. It is known from Viehmann that the set $B(G)_w$ always has a unique minimum \cite{Vi21} and maximum \cite{Viehmann2014}. We denote them by $[b_{w,\min}]$ and $[b_{w,\max}]$.

\begin{proposition}[{\cite[Proposition 3.1]{HNY2}}]\label{prop:ess-gap}Let $w\in \tW$ and $[b]\in B(G)_w$, we have 
$$\dim X_w(b)\ge \dim X_w(b_{w,\max}) +\text{Ess-gap}([b],[b_{w,\max}])  . $$
\end{proposition}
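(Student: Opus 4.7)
The approach I would take, following \cite[Proposition 3.1]{HNY2}, is to combine the purity of the Newton stratification on the affine Schubert cell $\CI w\CI$ with a fibration relation between the Newton stratum $(\CI w\CI)_{[b]} := \CI w\CI\cap [b]$ and $X_w(b)$.

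First, I would set up the Newton stratification $\CI w\CI = \bigsqcup_{[b]\in B(G)_w}(\CI w\CI)_{[b]}$, noting that $(\CI w\CI)_{[b_{w,\max}]}$ is the open dense generic stratum since $[b_{w,\max}]$ is the maximal element of $B(G)_w$. Next I would invoke the purity of the Newton stratification at Iwahori level -- the Iwahori analogue of Chai's theorem \cite{Chai}, extended to this setting by Hamacher \cite{Ham15} and refined by Mili\'cevi\'c--Viehmann -- to obtain the upper bound on codimension
\[
\dim (\CI w\CI)_{[b_{w,\max}]} - \dim (\CI w\CI)_{[b]} \le \ell([b],[b_{w,\max}]).
\]

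The next step is to relate $\dim X_w(b)$ to $\dim (\CI w\CI)_{[b]}$ via the fibration structure. Setting $\widetilde X_w(b) := \{g \in G(L) : g^{-1} b \s(g) \in \CI w\CI\}$, the map $g\mapsto g^{-1} b \s(g)$ exhibits $(\CI w\CI)_{[b]}$ as the base of a left $\BJ_b(F)$-torsor with total space $\widetilde X_w(b)$, while the same total space is a right $\CI$-torsor over $X_w(b)$. Comparing the virtual dimensions of the pro-algebraic groups $\BJ_b(F)$ and $\CI$ -- this difference being finite and equal to $\<\nu(b), 2\rho\>$ up to a constant independent of $[b]$ -- should yield a formula of the shape
\[
\dim X_w(b) = \dim (\CI w\CI)_{[b]} - \<\nu(b), 2\rho\> + C,
\]
for some constant $C$ depending only on $w$.

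Combining the purity bound with this dimension relation and plugging in the length formula $\ell([b],[b_{w,\max}]) = \<\nu(b_{w,\max})-\nu(b),\rho\> + \tfrac{1}{2}\de(b) - \tfrac{1}{2}\de(b_{w,\max})$ from the preceding discussion, the algebra cleanly gives
\begin{align*}
\dim X_w(b) - \dim X_w(b_{w,\max}) &\ge -\ell([b],[b_{w,\max}]) + \<\nu(b_{w,\max})-\nu(b),2\rho\>\\
&= \<\nu(b_{w,\max})-\nu(b),\rho\> - \tfrac{1}{2}\de(b) + \tfrac{1}{2}\de(b_{w,\max}),
\end{align*}
which is exactly $\text{Ess-gap}([b],[b_{w,\max}])$. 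The main obstacle is the purity statement at Iwahori level; once that is in hand, the dimension identity is a routine tracking of torsor structures and the final combinatorial step is immediate from the definitions.
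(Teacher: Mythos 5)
Your proposal is correct and reconstructs what is in fact the argument in \cite[Proposition~3.1]{HNY2}: the paper itself offers no proof here, only the citation, and the cited proof does go exactly through (i) purity of the Newton stratification on $\CI w\CI$, giving $\dim \CN_{[b_{w,\max}]} - \dim\CN_{[b]}\le\ell([b],[b_{w,\max}])$, and (ii) the relation $\dim X_w(b)=\dim\CN_{[b]}-\langle\nu(b),2\rho\rangle+C$ with $C$ independent of $[b]$. Your final algebra with the length formula of Chai--Hamacher--Viehmann is verified and does produce exactly the essential gap.

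The one place where your write-up is looser than it should be is the dimension identity in step (ii). The heuristic \enquote{$\dim\CI-\dim\BJ_b(F)$ is finite and equal to $\langle\nu(b),2\rho\rangle$ up to a constant} is not literally meaningful: both objects are infinite-dimensional, $\BJ_b(F)$ is not even an $\overline{\BF}_q$-scheme, and the Lang-type map $g\mapsto g^{-1}b\sigma(g)$ is not a fibration of schemes with $\BJ_b(F)$-fibres in the naive sense. The precise statement one needs is the regularized/truncated version of this identity, which is exactly what is established by Hamacher \cite{Ham15} and in the Iwahori setting by Mili\'cevi\'c--Viehmann \cite{MV20} (cf.\ also the references the paper itself cites around Theorem~\ref{thm:purity}); there, one works with congruence-level truncations of $\CI w\CI$ so that all dimensions and codimensions are finite and the $\langle\nu(b),2\rho\rangle$ shift appears as a genuine fibre-dimension comparison. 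If you cite one of these results as a black box rather than the torsor heuristic, the proof is complete; without it, the key dimension identity is only asserted, not proved.
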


The following theorem characterize the case where equality in Proposition \ref{prop:ess-gap} holds.
\begin{theorem}[Cf.\ {\cite[Theorem 3.16]{MV20}}]\label{thm:purity}
    Let $w \in \tW$ and $[b] \in B(G)_w$. The following are equivalent:
    \begin{enumerate}
        \item $\dim X_w(b') - \dim X_w(b_{w,\max}) \leq \text{Ess.-gap}([b']
    , [b_{w,\max}])$ for all $[b'] \in B(G)_x$ with $[b] \leq [b']$.
        \item $\dim X_w(b') - \dim X_w(b_{w,\max}) = \text{Ess.-gap}([b']
    , [b_{w,\max}])$ for all $[b'] \in B(G)_x$ with $[b] \leq [b']$.
        \item If $[b'] \in B(G)$ with $[b] \leq [b'] \leq [b_{w,\max}]$, then $[b'] \in B(G)_x$, and the closure of $\CI \dw \CI \cap [b']$ inside $\CI \dw \CI$ is the union of all $\CI \dw \CI \cap [b'']$ for $[b''] \in B(G)_w$ with $[b''] \leq [b']$.
    \end{enumerate}
\end{theorem}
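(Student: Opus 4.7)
The plan is to establish the cycle (2) $\Rightarrow$ (1) $\Rightarrow$ (2) together with (2) $\Leftrightarrow$ (3). The first implication (2) $\Rightarrow$ (1) is immediate. For (1) $\Rightarrow$ (2), I would apply Proposition~\ref{prop:ess-gap} with the roles reversed: that result provides the lower bound
\begin{align*}
\dim X_w(b') - \dim X_w(b_{w,\max}) \ge \text{Ess.-gap}([b'], [b_{w,\max}])
\end{align*}
for every $[b'] \in B(G)_w$. Combined with the upper bound assumed in (1), equality is forced for all $[b']$ with $[b] \le [b'] \le [b_{w,\max}]$, which is exactly (2).

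The substantive content lies in (2) $\Leftrightarrow$ (3), which I would prove along the lines of \cite[Theorem 3.16]{MV20}. The key technical input is a dimension comparison between the ADLV $X_w(b')$ and the Newton stratum $\CI \dw \CI \cap [b']$ inside the Schubert cell $\CI \dw \CI$: up to a correction involving $\langle \nu(b'), \rho\rangle$ and $\de(b')$, the codimension of $\CI \dw \CI \cap [b']$ inside $\CI \dw \CI$ is controlled by $\ell(w) - \dim X_w(b')$. Under this translation, the equality of dimensions in (2) is equivalent to the statement that the codimensions of consecutive Newton strata inside $\CI \dw \CI$ add along any chain in $B(G)$ from $[b]$ up to $[b_{w,\max}]$, which is the defining feature of purity of the Newton stratification.

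For the direction (2) $\Rightarrow$ (3), additivity of codimensions along the ranked poset $B(G)$ (using \cite[Theorem~7.4]{Chai}) forces every intermediate $[b']$ to actually occur in $B(G)_w$: if some intermediate stratum were empty, we would get a codimension jump violating the additivity, contradicting Proposition~\ref{prop:ess-gap} applied at the skipped class. The closure description of (3) then follows because the only way the codimensions can be additive along the chain is for the closure of $\CI \dw \CI \cap [b']$ to equal the union of $\CI \dw \CI \cap [b'']$ over $[b''] \le [b']$; purity rules out extraneous components. For the converse (3) $\Rightarrow$ (2), once we know the closure structure, the dimension of each stratum $\CI \dw \CI \cap [b']$ is determined by descending induction along the poset, and pushing the resulting codimensions through the dimension formula recovers the essential-gap equality.

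The main obstacle is the dimension formula comparing $\dim X_w(b')$ to $\dim(\CI \dw \CI \cap [b'])$: although morally standard, its derivation requires identifying $\CI \dw \CI \cap [b']$ with a bundle over $X_w(b')$ whose fibres are controlled by the $\s$-centralizer $\BJ_{b'}(F)$, then keeping careful track of the contributions of $\nu(b')$ and $\de(b')$ so that the formula transports the essential gap to a codimension statement inside $\CI \dw \CI$. A secondary subtlety is the non-emptiness assertion for intermediate classes in (3); this is the place where the ranked-poset property of $B(G)$ is essential, since otherwise nothing prevents holes in $B(G)_w$ along a chain.
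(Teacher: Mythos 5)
The paper imports Theorem~\ref{thm:purity} directly from \cite[Theorem 3.16]{MV20}; the ``Cf.'' in the header signals that no proof is given here, so there is no in-paper argument to compare against.

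Your reduction of (1) $\Leftrightarrow$ (2) via Proposition~\ref{prop:ess-gap} is correct and complete. The sketch of (2) $\Leftrightarrow$ (3) follows the right strategy but leaves two genuine gaps. First, the dimension formula relating $\dim X_w(b')$ to $\dim(\CI\dw\CI\cap[b'])$ --- the identity that transports the essential-gap normalization into a codimension statement inside $\CI\dw\CI$ --- is only alluded to; writing it out and tracking the $\langle\nu(b'),\rho\rangle$ and $\de(b')$ terms is precisely the bookkeeping that cannot be waved through. Second, and more seriously, your argument that condition (2) forces the non-emptiness of intermediate classes is circular as written: you propose to apply Proposition~\ref{prop:ess-gap} to the ``skipped class'', but that proposition presupposes $[b']\in B(G)_w$, which is the very thing you are trying to establish. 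The missing input is the purity of the Newton stratification (the locus where the Newton point jumps is pure of codimension one), a separate geometric theorem that is the real technical engine in \cite{MV20}; without it, nothing excludes a codimension jump of size greater than one. Finally, a small inversion: the containment of the closure of $\CI\dw\CI\cap[b']$ inside the union of lower Newton strata is automatic from semi-continuity of the Newton point; purity is needed for the \emph{reverse} containment (that every lower stratum in $B(G)_w$ actually appears in the closure), not to ``rule out extraneous components'' as you state.
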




Our main result of this section will be the following:

\begin{theorem}\label{thm:gctPurity}
Let $w$ be a geometric Coxeter type element. Then
\begin{enumerate}
    \item the set $B(G)_w$ is \emph{saturated}, that is, \begin{align*} B(G)_w =\{ [b]\in B(G) \mid  [b_{w,\min}] \le [b] \le [b_{w,\max}]\};\end{align*}
    \item for any $[b] \in B(G)_w$, we have $\dim X_w(b) + \<\nu_b,2\rho\> = \ell(w) - \ell([b], [b_{\max}])$:
   
    \item for any $[b]\in B(G)_w$, the closure of the Newton stratum $\CI \dot{w} \CI\cap [b]$ inside $\CI \dot{w} \CI$ is the union of all Newton strata $\CI \dot{w} \CI\cap [b']$ for $[b'] \in B(G)_w$ with $[b'] \leq [b]$.
\end{enumerate}

\end{theorem}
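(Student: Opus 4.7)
The strategy is to verify condition (2) of Theorem~\ref{thm:purity} for $[b]=[b_{w,\min}]$, so that parts (1) and (3) of the present theorem follow from the equivalences in that theorem. The central input is the explicit product description of $X_w(b)$ supplied by Theorem~\ref{thm:geo-cox}.

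First I would compute $\dim X_w(b)$ explicitly. Fix a reduction tree $\CT$ of $w$ and, for each $[b]\in B(G)_w$, let $\ud p$ be the unique reduction path with $\mathrm{end}(\ud p)\in[b]$, whose existence and uniqueness are guaranteed by strong multiplicity one. Write $\mathrm{end}(\ud p)=c_Kx$ as in Definition~\ref{def:min-cox}. By Theorem~\ref{thm:geo-cox}, each irreducible component of $X_w(b)$ is universally homeomorphic to $X'\times(\BG_m)^{\ell_{\mathrm{I}}(\ud p)}\times(\BA^1)^{\ell_{\mathrm{II}}(\ud p)}$, where $X'$ is a classical Coxeter Deligne--Lusztig variety of dimension $\ell(c_K)$. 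Combining $\ell(c_Kx)=\ell(c_K)+\ell(x)$ (from $x\in{}^{K}\tW^{\s(K)}$ and $c_K\in W_K$), the identity $\ell(x)=\langle\nu_b,2\rho\rangle$ coming from $\s$-straightness of $x$, and the length drop $\ell(w)-\ell(\mathrm{end}(\ud p))=\ell_{\mathrm{I}}(\ud p)+2\ell_{\mathrm{II}}(\ud p)$ along the path, one obtains
\[
    \dim X_w(b)\;=\;\ell(w)-\langle\nu_b,2\rho\rangle-\ell_{\mathrm{II}}(\ud p).
\]
Using the general formula $\ell([b],[b_{w,\max}])=\langle\nu(b_{w,\max})-\nu_b,\rho\rangle+\tfrac12\de(b)-\tfrac12\de(b_{w,\max})$, a direct computation shows that this is equivalent to condition~(2) of Theorem~\ref{thm:purity} once one establishes the combinatorial identity $\ell_{\mathrm{II}}(\ud p)=\ell([b],[b_{w,\max}])$.

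Proving this identity is the main obstacle. I would proceed by induction on the depth of $\CT$. At the inductive step, write $w\approx_\s w'$ with $s_a w'\s(s_a)<w'$, so that the two children of the root are the type~I child $w'\s(s_a)$ and the type~II child $s_aw'\s(s_a)$. The key claim to verify is that $[b_{w,\max}]=[b_{w'\s(s_a),\max}]$, i.e.\ the generic Newton stratum is preserved by the type~I reduction, while $[b_{s_aw'\s(s_a),\max}]$ is an immediate predecessor of $[b_{w,\max}]$ in the poset $B(G)$, so that each type~II edge in $\ud p$ corresponds to a single rank-one descent in $B(G)$. Strong multiplicity one ensures that $\ell_{\mathrm{II}}(\ud p)$ is a well-defined invariant of $[b]$ alone, so the inductive bookkeeping is unambiguous. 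The technical heart is a local analysis of the Newton stratification of $\CI\dot w\CI$ at this single reduction step, controlling how the Newton vectors of the $\s$-straight endpoint components change between the two branches; this should follow from the compatibility of the Deligne--Lusztig reduction with class polynomials and with the newton maps on the two strata $X_{\text{I}},X_{\text{II}}$ of Proposition~\ref{prop:DL-reduction}.

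Once this identity is established, the dimension formula of part~(2) is proved, and Theorem~\ref{thm:purity} yields condition~(3) of the present theorem. In particular, every $[b']\in B(G)$ with $[b_{w,\min}]\le[b']\le[b_{w,\max}]$ lies in $B(G)_w$, giving saturation~(1). The combinatorial formula in~(2) reads off from $\dim X_w(b)=\ell(w)-\langle\nu_b,2\rho\rangle-\ell([b],[b_{w,\max}])$.
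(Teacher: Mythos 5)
Your overall strategy---compute $\dim X_w(b) = \ell(w) - \langle\nu_b,2\rho\rangle - \ell_{\text{II}}(\ud p)$ from Theorem~\ref{thm:geo-cox}, then feed it into the equivalent conditions of Theorem~\ref{thm:purity}---is reasonable, and the paper does ultimately prove your target identity $\ell_{\text{II}}(\ud p) = \ell([b],[b_{w,\max}])$ (it is exactly the $\ell_2$ formula of Theorem~\ref{thm:lengthExplicit}). The problem is the order in which you try to establish things.

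The gap is in what you call the ``technical heart'': you need the covering relation $\ell([b_{sw\sigma(s),\max}],[b_{w,\max}])=1$. This is a genuine upper bound in the poset $B(G)$, and none of the tools you invoke delivers it. Strong multiplicity one only gives strict inequality $[b_{sw\sigma(s),\max}]<[b_{w,\max}]$; the general purity estimate (Proposition~\ref{prop:ess-gap}) only gives the lower bound $\ell([b_{sw\sigma(s),\max}],[b_{w,\max}])\ge 1$; and saturation of $B(G)_{sw}$ and $B(G)_{sw\sigma(s)}$ from the inductive hypothesis does not rule out an intermediate $\sigma$--conjugacy class that lies in \emph{neither} of these two sets (you have not yet shown $B(G)_w$ itself is saturated---that is the very thing being proved). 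Similarly, the claim that strong multiplicity one by itself makes $\ell_{\text{II}}(\ud p)$ a well-defined invariant of $[b]$ is not established anywhere in what you cite; the paper proves it as a consequence of the dimension formula, not before it.

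What the paper does instead, and why it works, is to prove a different dimension formula first: $\dim X_w(b) = \tfrac 12\bigl(\ell(w) + \ell_{R,\sigma}(\cl\,w) - \langle\nu(b),2\rho\rangle - \de(b)\bigr)$ (Proposition~\ref{prop:gctDimensionFormula}), by induction on $\ell(w)$ with base case the minimal Coxeter type elements via Proposition~\ref{prop:mctCharacterization}. The inductive step splits $B(G)_w = B(G)_{sw}\sqcup B(G)_{sw\sigma(s)}$ and uses the inequality $\ell_{R,\sigma}(\cl\, sw)\ge \ell_{R,\sigma}(\cl\,w)-1$ together with the equality $\ell_{R,\sigma}(\cl\,sw\sigma(s))=\ell_{R,\sigma}(\cl\,w)$ to verify condition (1) (an inequality) of Theorem~\ref{thm:purity}. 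This is precisely the missing upper bound. Only after Proposition~\ref{prop:gctDimensionFormula} and Theorem~\ref{thm:purity} are in hand does the paper deduce, in the proof of Theorem~\ref{thm:lengthExplicit}, that $\ell([b_{w_2,\max}],[b_{w,\max}]) = \tfrac 12\bigl(\ell(w)-\ell(w_2)+\ell_{R,\sigma}(\cl\,w)-\ell_{R,\sigma}(\cl\,w_2)\bigr)=1$, which is what you assume. In short: your proposal puts the combinatorial identity $\ell_{\text{II}}=\ell([b],[b_{w,\max}])$ first, but that identity is downstream of the reflection-length dimension formula, not upstream of it, and without the $\ell_{R,\sigma}$ bookkeeping your induction has no way to close the upper bound.
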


\subsection{}
For the proof of Theorem \ref{thm:gctPurity}, we need some preparation. We write $\cl : \tW\to W_0$ for the natural projection, sending $w = t^{\mu} z\in \tW$ to its classical part $\cl\, w = z\in W_0$. Moreover, we write $\ell_{R,\sigma}: W_0\to \mathbb Z_{\geq 0}$ for the $\sigma$-twisted reflection length from \cite[Section~4]{SSY}.



\begin{lemma}\label{lem:reflectionLength}
    Let $K\subsetneq \tilde\BS$ be a spherical subset and $x\in {}^{K} \tW {}^{\sigma (K)}$ with $\Ad(x)\circ\sigma$ normalizing $W_K$. Let $u\in W_K$ and $w = ux$. Then $\ell_{R,\sigma}(\cl\, w) = \ell_{R,\sigma}(\cl\, x) + \ell_{R,x\circ\sigma}(u)$.
\end{lemma}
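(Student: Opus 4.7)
My plan is to reduce the identity to a computation of fixed subspaces via the Springer–Steinberg generalisation of Carter's theorem. Recall that for any element $v\delta$ of an extended finite Coxeter group acting on a reflection representation $V$, one has $\ell_R(v\delta) = \dim V - \dim V^{v\delta}$. Setting $V = X_\ast(T)\otimes_{\BZ}\BR$, this rewrites the twisted reflection length of $v\in W_0$ as
\[
\ell_{R,\sigma}(v) \;=\; \ell_R(v\sigma) - \ell_R(\sigma) \;=\; \dim V^\sigma - \dim V^{v\sigma},
\]
and analogously, since $x\sigma(K)=K$ makes $\Ad(x)\circ\sigma$ an automorphism of the finite Coxeter group $W_K$,
\[
\ell_{R,x\circ\sigma}(u) \;=\; \dim V_K^{\,\Ad(x)\sigma} - \dim V_K^{\,u\,\Ad(x)\sigma},
\]
where $V_K = \sum_{\alpha\in\Phi_K}\BR\alpha^{\vee}\subseteq V$ is the reflection representation of $W_K$.

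Writing $y := \cl\, x\in W_0$ and noting $\cl\,w = uy$, the lemma amounts to the identity
\[
\dim V^{y\sigma} - \dim V^{uy\sigma} \;=\; \dim V_K^{\,\Ad(x)\sigma} - \dim V_K^{\,u\,\Ad(x)\sigma}.
\]
To establish this, I would fix a $W_0\rtimes\langle\sigma\rangle$-invariant inner product on $V$, giving the orthogonal decomposition $V = V_K \oplus V^{W_K}$. The hypothesis $x\sigma(K)=K$ forces $\Ad(x)\circ\sigma$ to permute $\Phi_K$, so its linear part on $V$—which is exactly $y\sigma$—stabilises $V_K$ and therefore also $V^{W_K}$; moreover, by construction $y\sigma|_{V_K}$ coincides with the induced action of $\Ad(x)\sigma$ on the reflection representation of $W_K$. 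Since $u\in W_K$ acts trivially on $V^{W_K}$, the operators $uy\sigma$ and $y\sigma$ agree on $V^{W_K}$, so
\[
V^{y\sigma} \;=\; V_K^{\,\Ad(x)\sigma}\oplus (V^{W_K})^{y\sigma},
\qquad
V^{uy\sigma} \;=\; V_K^{\,u\,\Ad(x)\sigma}\oplus (V^{W_K})^{y\sigma},
\]
and subtracting dimensions gives the required identity.

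The only step requiring care is translating the set-theoretic condition $x\sigma(K)=K$ on affine simple reflections into the linear statement that the linear part $y\sigma$ stabilises $V_K$. This is essentially immediate: $\Ad(x)\sigma$ permutes the affine simple roots indexed by $K$, and passing to classical parts (i.e.\ to linear parts in $V$) gives a permutation of the coroots $\Phi_K^{\vee}$, which generates $V_K$. The minimality condition $x\in {}^{K}\tW^{\sigma K}$ is used here to ensure that no spurious translation obstructs this identification. The $\sigma$-straightness of $x$ is not used directly in this lemma, but it is implicit via the ambient hypotheses that are assembled to produce the situation of interest.
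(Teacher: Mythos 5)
Your proof is correct, and it takes a genuinely different route from the paper's.

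Both arguments reduce the statement to a computation of dimensions of fixed subspaces via the twisted Carter theorem $\ell_{R,\delta}(v)=\dim V^\delta-\dim V^{v\delta}$ (the paper invokes this implicitly in the step $\dim(\BQ\Phi_K^\vee)^{\cl\,x\sigma}-\dim(\BQ\Phi_K^\vee)^{\cl(ux)\sigma}=\ell_{R,\Ad(x)\sigma}(u)$ and in the final translation; you state it up front). The divergence is in how one computes $\dim V^{\cl\,w\sigma}-\dim V^{\cl\,x\sigma}$. The paper restricts the averaging operator $\varphi=\mathrm{avg}_K$ to $V^{\cl\,w\sigma}$, shows $\ker\varphi=V^{\cl\,w\sigma}\cap\BQ\Phi^\vee_{\cl K}$ and $\mathrm{im}\,\varphi=\{v\in V^{\cl\,w\sigma}:\langle v,\cl K\rangle=0\}$, then repeats the computation with $u=1$ and subtracts. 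You instead use the decomposition $V=V_K\oplus V^{W_K}$ (whether obtained from an invariant inner product or simply observed directly as $\BQ\Phi_K^\vee\oplus\{v:\langle v,\cl K\rangle=0\}$), observe that $\cl(x)\sigma$ preserves both summands because $x\sigma(K)=K$, and that $u\in W_K$ acts as the identity on $V^{W_K}$, so the fixed space of $\cl(w)\sigma$ splits as $V_K^{u\,\Ad(x)\sigma}\oplus (V^{W_K})^{\cl(x)\sigma}$. This is essentially the same kernel/image decomposition the paper extracts from $\varphi$, but organized more transparently: you see the splitting immediately rather than via $\varphi$, and the comparison with the $u=1$ case is built into the decomposition. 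The one place to be a little more explicit than you are is the identification of the $\Ad(x)\circ\sigma$ action on the reflection representation of $W_K$ with $\cl(x)\sigma|_{V_K}$: this is because $\Ad(x)\circ\sigma$ permutes the affine simple roots $\{a_i:i\in K\}$, and passing to gradients gives a permutation of $\{\mathrm{grad}(a_i):i\in K\}$, which generate $\Phi_K$ and hence $V_K$. Also, as you correctly suspect, the minimality hypothesis $x\in {}^K\tW^{\sigma K}$ is not strictly needed for this lemma — stability of $V_K$ under $\cl(x)\sigma$ already follows from $x\sigma(K)=K$ — though it is part of the ambient setup. As a side remark, the intermediate display in the paper's proof contains a sign slip ($\dim V^{\cl\,w\sigma}-\dim V^{\cl\,x\sigma}=-\ell_{R,\Ad(x)\sigma}(u)$, not $+$), but the final statement there is correct; your version keeps the signs straight.
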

\begin{proof}
    Denote by $\Phi_K$ the finite subroot system of $\Phi_{\af}$ generated by the $a_i$ for $i\in K$. We write $W_K\subseteq \tW$ for its finite Weyl group.

    Let $V = X_\ast(T)_{\Gamma_0}\otimes\mathbb Q$. 
    Throughout this proof, we use $\cl\, w\sigma$ to refer to the endomorphism of $V$ given by the composite action of the finite Weyl group element $\cl\,w\in W_0$ and the Frobenius $\sigma$. Consider the map
    \begin{align*}
        \varphi := \mathrm{avg}_{K}\vert_{V^{ \cl\, w\sigma }} : V^{\cl\, w\sigma}\rightarrow V,~v\mapsto \frac 1{\# W_{K}}\sum_{g\in W_K} (\cl\,g)v.
    \end{align*}

    Denote by $V_K$ the $\mathbb Q$-subspace of $V$ generated by the coroots of the classical parts $(\cl\,a_i)^\vee$ for $i\in K$.
    Then $v-\varphi(v)\in V_K$ for all $v\in V^{\cl\, w\sigma}$. We see that $\mathrm{ker}(\varphi) = V^{\cl\, w\sigma}\cap V_K$. Thus
    \begin{align*}
        \dim \mathrm{ker}(\varphi) &= \dim V_K^{\cl(ux)\sigma } \\&= \dim V_K^{\cl\,x\sigma } - \ell_{R,\cl\,x\sigma }(\cl\,u)
        \\&=\dim V_K^{\cl\,x\sigma} - \ell_{R, x\sigma}(u)
    \end{align*} Moreover, one checks easily for $v\in V^{\cl\, w\sigma}$ that
    \begin{align*}
        \cl\, w\sigma\varphi(v) &= \frac 1{\# W_K} \sum_{g\in W_K} \sigma  \cl(ux\sigma(g))\sigma v \\&= \frac 1{\# W_{K}} \sum_{g'\in W_{K}} (\cl g)\cl (ux)\sigma v
        \\&\underset{v\in V^{\cl\, w\sigma}}=\frac 1{\# W_J} \sum_{g'\in W_J} (\cl g)v = \varphi(v).
    \end{align*}
    We claim that
    \begin{align*}
        \mathrm{im}(\varphi) = \{v\in V^{\cl\, w\sigma}\mid \langle v,\cl\, a_i\rangle = \{0\}\forall i\in K\}.
    \end{align*}Indeed, the the inclusion $\mathrm{im}(\varphi)\subseteq V^{\cl\, w\sigma}$ has just been proved, and it is clear that $\langle v,\cl\, a_i\rangle=\{0\}$ for all $v\in \mathrm{im}(\varphi), i\in K$ by definition of $\varphi$. It remains to see that the restriction of $\varphi$ to the claimed image is the identity map to obtain $\supseteq$.
    
    We conclude
    \begin{align*}
        &\dim V^{\cl\, w\sigma} = \dim \mathrm{ker}(\varphi) + \dim\mathrm{im}(\varphi) \\&=  \dim V_K^{\cl\,x\sigma} - \ell_{R,x\sigma}(u) + \dim \{v\in V\mid \cl \,w\sigma (v) = v\text{ and }\langle v,a_i\rangle = \{0\}\forall i\in K\}.
    \end{align*}
    Repeating the same calculation for the special case $u=1$, we get
    \begin{align*}
        \dim V^{\cl\, x\sigma} = \dim V_K^{\cl\, x\sigma} + \dim \{v\in V\mid \cl\,x\sigma (v) = v\text{ and }\langle v,a_i\rangle = \{0\}\forall i\in K\}.
    \end{align*}
    If $v\in V$ satisfies $\langle v,a_i\rangle=\{0\}$ for all $i\in K$, then certainly $\cl\,w\sigma v = \cl\,x\sigma v$. Hence we conclude
    \begin{align*}
        \dim V^{\cl\,w\sigma} - \dim V^{\cl\,x\sigma} = \ell_{R,x\sigma}(u).
    \end{align*}
    In other words, $\ell_{R,\sigma}(\cl\, w) = \ell_{R,\sigma}(\cl\, x) + \ell_{R,\sigma  x}(u)$. This finishes the proof.
\end{proof}
\begin{proposition}\label{prop:mctCharacterization}
    Let $w\in \tW$, denote the $\sigma$-conjugacy class of a representative $\dot w\in N_G(T)(L)$ by $[\dw]\in B(G)$. Then we have
    \begin{align*}
        \ell(w) \geq \langle \nu([\dw]),2\rho\rangle + \ell_{R,\sigma}(\cl\, w) - \de([\dw]).
    \end{align*}
    Moreover, equality holds if and only if $w$ is of minimal Coxeter type.
\end{proposition}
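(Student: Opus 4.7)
The plan is to reduce the inequality to a pure reflection-length statement inside the finite parabolic subgroup $W_K$, after which the Coxeter characterization of equality becomes transparent.

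First, I will check that both sides of the desired inequality are compatible with the Deligne--Lusztig reduction. The invariants $\nu([\dw])$ and $\de([\dw])$ depend only on the $\s$-conjugacy class of $\dw$, so they are unchanged under $\approx_\s$; a cyclic shift $w\mapsto sw\s(s)$ modifies $\cl\,w$ by a $\s$-twisted conjugation, which preserves $\ell_{R,\s}(\cl\,w)$; and $\ell(w)$ is non-increasing along $\to_\s$. Thus if the inequality holds for some $w'$ with $w\to_\s w'$, it holds for $w$. Invoking Theorem~\ref{thm:minlen} together with the He--Nie classification of minimal length elements, I may therefore assume $w=ux$ with $x\in{}^K\tW^{\s K}$ being $\s$-straight, $x\s(K)=K$, $u\in W_K$, and $\ell(w)=\ell(u)+\ell(x)$.

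For such $w$, $\s$-straightness of $x$ gives $\ell(x)=\langle\nu_x,2\rho\rangle=\langle\nu([\dw]),2\rho\rangle$ (using $[\dw]=[\dx]$), and Lemma~\ref{lem:reflectionLength} yields $\ell_{R,\s}(\cl\,w)=\ell_{R,\s}(\cl\,x)+\ell_{R,x\s}(u)$. Substituting, the inequality to prove becomes
\[
\ell(u)+\de([\dx]) \;\geq\; \ell_{R,\s}(\cl\,x)+\ell_{R,x\s}(u).
\]
The crux of the proof is then the defect identity $\de([\dx])=\ell_{R,\s}(\cl\,x)$, valid for $\s$-straight $x\in{}^K\tW^{\s K}$ with $x\s(K)=K$. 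I will obtain it by identifying $\BJ_{\dx}$ with an inner form of the standard Levi $M_K$ and computing its $F$-rank via the Galois action on the coweight space $V=X_\ast(T)_{\Gamma_0}\otimes\BQ$. With this identity, the inequality collapses to the purely parabolic bound $\ell(u)\geq\ell_{R,x\s}(u)$.

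Finally, the decomposition $V=V_K\oplus V_K^\perp$ (both summands $x\s$-stable since $x\s(K)=K$, as in the proof of Lemma~\ref{lem:reflectionLength}) combined with the fact that $u\in W_K$ acts trivially on $V_K^\perp$ shows that $\ell_{R,x\s}(u)$ equals the $\Ad(x)\circ\s$-twisted reflection length of $u$ inside the finite Coxeter system $(W_K,K)$. The classical bound $\ell(u)\geq\ell_R^{W_K,\Ad(x)\s}(u)$ holds for every $u\in W_K$, with equality exactly when $u$ is a $\Ad(x)\circ\s$-Coxeter element of $W_K$ (both quantities then equal the number of $\Ad(x)\circ\s$-orbits on $K$). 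By Definition~\ref{def:min-cox}, this characterizes precisely when the minimal length element $w=ux$ is of minimal Coxeter type.

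\textbf{Main obstacle.} The technical heart of the argument is the defect identity $\de([\dx])=\ell_{R,\s}(\cl\,x)$, which requires an explicit structural description of the $\s$-centralizer $\BJ_{\dx}$ as a reductive $F$-group and the computation of its $F$-rank via the Galois action on $V$. This should follow from the He--Nie theory of $\s$-straight $\s$-conjugacy classes combined with a careful tracking of the $\Gamma_0$-coinvariants, but the bookkeeping between the ambient Galois action and the parabolic $K$ is the delicate point. Once this identity is in place, every remaining step reduces to a formal manipulation of reflection lengths inside $W_K$.
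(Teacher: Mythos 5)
Your proposal follows essentially the same route as the paper's own proof: reduce to a minimal length representative $w = ux$ via $\to_\sigma$ (using that the right-hand side is $\sigma$-conjugation invariant while $\ell(w)$ only decreases), split $\ell_{R,\sigma}(\cl\, w)$ via Lemma~\ref{lem:reflectionLength}, substitute $\ell(x) = \langle\nu([\dw]),2\rho\rangle$ from $\sigma$-straightness, apply the defect identity $\de([\dx]) = \ell_{R,\sigma}(\cl\, x)$, and reduce to the finite inequality $\ell(u)\geq \ell_{R,\Ad(x)\circ\sigma}(u)$ in $W_K$.

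What you flag as the \enquote{main obstacle} is not actually a gap: the identity $\de([\dot x]) = \ell_{R,\sigma}(\cl\, x)$ is a known result, and the paper disposes of it by citing \cite[Proposition~3.9]{Sch23}. Your sketch (identify $\BJ_{\dot x}$ with an inner form of a standard Levi and compute ranks via the Galois action on $X_\ast(T)_{\Gamma_0}\otimes\BQ$) is the right idea, but there is no need to re-derive it.

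One genuine imprecision: you assert that equality in $\ell(u)\geq\ell_{R,\Ad(x)\circ\sigma}(u)$ holds \emph{exactly} when $u$ is a $\Ad(x)\circ\sigma$-Coxeter element of $W_K$. This is false as stated, since for example $u=1$ gives equality for any $K\neq\emptyset$ yet is a twisted Coxeter element only of the trivial parabolic. The correct characterization is that equality holds iff $u$ is a $\Ad(x)\circ\sigma$-Coxeter element of some $\Ad(x)\circ\sigma$-stable standard parabolic $W_{K''}\subseteq W_K$. Fortunately this does not derail your conclusion, because Definition~\ref{def:min-cox} quantifies existentially over the subset $K$: given such a $K''$, one checks $x\in{}^{K''}\tW^{\sigma(K'')}$ and $x\sigma(K'')=K''$, so $w\approx_\sigma u x$ with $u\in W_{K''}$ exhibits $w$ as being of minimal Coxeter type. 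You should make this refinement explicit so that the equivalence with Definition~\ref{def:min-cox} is airtight.
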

\begin{proof}
    Note that the right-hand side of the inequality is invariant under $\sigma$-conjugation of $w$. So we may assume WLOG that $w$ is min-len and of the form $w = ux$ such that there is a spherical subset $K\subsetneq \tilde\BS$ with $u\in W_K$.

    By the above lemma, we get $\ell_{R,\sigma}(\cl\,w) = \ell_{R,\sigma}(\cl\, w) + \ell_{R,\Ad(x)\circ\sigma}(u)$. Moreover, it is clear that $\ell(w) = \ell(x)+\ell(u)$.

    Note that $\kappa(x) = \kappa(w)$ and for some $n>0$, the $\sigma$-twisted power $x^{\sigma,n} = x\sigma(x)\cdots \sigma^{n-1}(x)$ agrees with $w^{\sigma,n}$. Thus $[w] = [x]\in B(G)$. 

    From the definition of straight elements, we get
    \begin{align*}
        \ell(x) = \langle \nu([\dot x]),2\rho\rangle = \langle \nu([\dot w]),2\rho\rangle.
    \end{align*}
    Moreover, the identity $\de(\dot w]) = \de([\dot x]) = \ell_{R,\sigma}(\cl\, x)$ follows from \cite[Proposition~3.9]{Sch23}. So we conclude
    \begin{align*}
        &\ell(w) -( \langle \nu([\dw]),2\rho\rangle + \ell_{R,\sigma}(\cl\, w) - \de([\dw]))
        \\&=\ell(u) - \ell_{R,\Ad(x)\circ\sigma}(u).
    \end{align*}
    This quantity is always non-negative, and equal to zero if and only if $w$ is of minimal Coxeter type.
\end{proof}
\begin{remark}
    In the statement of Proposition~\ref{prop:mctCharacterization}, we may simply write
    \begin{align*}
        \langle \nu([\dw]),2\rho\rangle - \de([\dw]) = \langle \lambda([\dw]),2\rho\rangle
    \end{align*}
    by \cite[Proposition~3.9]{Sch23}.
\end{remark}

\begin{proposition}\label{prop:gctDimensionFormula}
    Let $w\in \tW$ be an element of geometric Coxeter type and let $[b]\in B(G)_w$. Then $(w,b)$ satisfies the equivalent conditions of Theorem~\ref{thm:purity} and
    \begin{align*}
        \dim X_w(b) = \frac 12\left(\ell(w)  + \ell_{R,\sigma}(\cl\, w) - \langle \nu(b),2\rho\rangle - \de(w)\right).
    \end{align*}
\end{proposition}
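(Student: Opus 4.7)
The plan is to compute $\dim X_w(b)$ directly for every $[b]\in B(G)_w$ using the reduction path machinery of Theorem~\ref{thm:geo-cox}, and then derive the purity statements from the resulting closed form via Theorem~\ref{thm:purity}. Given $[b]\in B(G)_w$, the strong multiplicity one condition of Definition~\ref{def:geo-cox} produces a unique reduction path $p$ from $w$ to an endpoint $e$ with $[\dot e]=[b]$. Writing $e\approx_\sigma c_Kx$ as in Definition~\ref{def:min-cox}, Theorem~\ref{thm:geo-cox} combined with the description of $X_e(\dot e)$ in Section~\ref{sec:5.2} and Lusztig's classical formula \cite[Theorem~2.6]{Lusz76} yields
\begin{equation*}
\dim X_w(b) = \ell_{\mathrm I}(p) + \ell_{\mathrm{II}}(p) + \ell(c_K).
\end{equation*}

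Using the length identity $\ell(w)=\ell(e)+\ell_{\mathrm I}(p)+2\ell_{\mathrm{II}}(p)$, the straightness identities $\langle\nu(b),2\rho\rangle=\ell(x)$ and $\de(b)=\ell_{R,\sigma}(\cl\,x)$, and Lemma~\ref{lem:reflectionLength} applied to $e=c_Kx$ (which gives $\ell(c_K)=\ell_{R,\sigma}(\cl\,e)-\de(b)$), the target dimension formula reduces to the purely combinatorial identity
\begin{equation*}
\ell_{\mathrm I}(p) = \ell_{R,\sigma}(\cl\,w) - \ell_{R,\sigma}(\cl\,e). \tag*{$(\star)$}
\end{equation*}
The identity $(\star)$ has the following content. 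Each $\approx_\sigma$ move and each type~II reduction $w'\to s_aw'\sigma(s_a)$ modifies $\cl\,w'$ only by $\sigma$-conjugation in $W_0$ and so preserves $\ell_{R,\sigma}(\cl\,\cdot)$, whereas each type~I reduction $w'\to s_aw'$ left multiplies $\cl\,w'$ by a finite reflection and so changes $\ell_{R,\sigma}(\cl\,\cdot)$ by $\pm 1$. The claim of $(\star)$ is that this sign is always $-1$ along every type~I edge of every reduction path of $w$.

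Granting $(\star)$, an elementary manipulation yields
\begin{equation*}
\dim X_w(b) - \dim X_w(b_{w,\max}) = \text{Ess.-gap}([b],[b_{w,\max}]) \qquad \text{for every } [b]\in B(G)_w,
\end{equation*}
which is condition~(2) of Theorem~\ref{thm:purity} applied to $[b]=[b_{w,\min}]$. By the equivalence in Theorem~\ref{thm:purity}, condition~(3) also holds: $B(G)_w$ is saturated in $B(G)$ between $[b_{w,\min}]$ and $[b_{w,\max}]$, yielding part~(1) of Theorem~\ref{thm:gctPurity}, and the Newton stratum closure statement of part~(3) of Theorem~\ref{thm:gctPurity} follows. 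Part~(2) of Theorem~\ref{thm:gctPurity} then follows from the dimension formula after substituting the expression for $\ell([b],[b_{w,\max}])$ in terms of $\nu$ and $\de$.

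The main obstacle is the combinatorial identity~$(\star)$. I would prove it by induction on $\ell(p)$: at each branching in the reduction tree, the two children $w'\sigma(s_a)$ (type~I) and $s_aw'\sigma(s_a)$ (type~II) inherit the geometric Coxeter structure and can be analysed simultaneously. Since every endpoint is min-len of minimal Coxeter type, Proposition~\ref{prop:mctCharacterization} pins down $\ell(e)$, $\ell_{R,\sigma}(\cl\,e)$, and $\de(\dot e)$ at the leaves; comparing these invariants across different leaves forces the sign of each type~I contribution to $\ell_{R,\sigma}(\cl\,w)-\ell_{R,\sigma}(\cl\,e)$ to be $-1$. Alternatively, one can extract $(\star)$ from the $P$-alcove structure behind geometric Coxeter type elements, following~\cite[\S3]{GHKR10},~\cite[\S3.3]{GHN1}, and~\cite[Lemma~5.15]{SSY}.
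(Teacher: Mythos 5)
Your overall plan — compute $\dim X_w(b)$ along the unique reduction path, then derive purity via Theorem~\ref{thm:purity} — matches the paper's strategy in outline, and the reduction to the combinatorial identity $(\star)$, namely that $\ell_{R,\sigma}(\cl\,\cdot)$ drops by exactly $1$ along every type~I edge, is indeed the crux. However, the argument you offer for $(\star)$ is a genuine gap, not a detail.

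The only cheap fact available is the inequality $\ell_{R,\sigma}(\cl\,sw)\geq \ell_{R,\sigma}(\cl\,w)-1$, since multiplying $\cl\,w$ by a reflection changes reflection length by $\pm1$; nothing in your sketch forces the sign to be $-1$. The appeal to Proposition~\ref{prop:mctCharacterization} only pins down the invariants at the \emph{endpoints}, and the appeal to ``comparing across leaves'' does not supply a constraint on the type~I edges individually: the path length identity $\ell(w)=\ell(e)+\ell_{\mathrm I}(p)+2\ell_{\mathrm{II}}(p)$ leaves one degree of freedom, so knowing $\ell(e)$, $\ell_{R,\sigma}(\cl\,e)$, $\de(\dot e)$ at the leaves is not enough to conclude $(\star)$ from bookkeeping alone. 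The $P$-alcove alternative is also not developed and is explicitly not pursued in the paper.

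The paper sidesteps $(\star)$ rather than proving it up front. In the inductive step with $w_1=sw$ (type~I child) and $w_2=sw\sigma(s)$ (type~II child), the type~II branch yields the \emph{exact} dimension formula from the inductive hypothesis, since type~II reductions preserve $\ell_{R,\sigma}(\cl\,\cdot)$. For the type~I branch, the paper uses only the one-sided bound $\ell_{R,\sigma}(\cl\,sw)\geq\ell_{R,\sigma}(\cl\,w)-1$ to get a \emph{lower} bound on $\dim X_w(b)$, and separately checks (by the inductive hypothesis on $w_1$) that the essential-gap equality $\dim X_w(b)-\dim X_w(b_{w,\max})=\text{Ess.-gap}([b],[b_{w,\max}])$ holds on the type~I branch. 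This verifies condition~(1) of Theorem~\ref{thm:purity}, whose equivalence with condition~(2) then propagates the exact formula, already known on the type~II branch, to the type~I branch as well. The identity $(\star)$ then falls out as a corollary of the dimension formula, rather than being an input to it. If you want to follow your route, you would need to prove $(\star)$ independently — which is nontrivial and not easier than the purity-based argument the paper uses.
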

\begin{proof}
    We prove the statement via induction on $\ell(w)$.

    For the inductive start, we consider the case where $w$ is of minimal Coxeter type. Then $[b]\in B(G)_w$ implies $[b] = [\dw] = [b_{w,\max}]$ and $\dim X_w(b) = \ell(w) - \langle \nu(b),2\rho\rangle$. The condition (2) of Theorem~\ref{thm:purity} is trivially satisfied in this case. Moreover, writing $\ell(w)$ as in Proposition~\ref{prop:mctCharacterization}, we get the claimed dimension formula.
    
    Next, we assume that $w$ is not of minimal Coxeter type and that the proposition has been proved for all geometric Coxeter type elements of smaller length than $w$. Note that our statement is invariant under length-preserving cyclic shifts $w \approx_{\s} sw\sigma(s)$. Using the Deligne-Lusztig reduction, we hence may assume that there exists a simple affine reflection $s$ with $\ell(sw\sigma(s))<\ell(w)$.
    
    By the strong multiplicity one property, the set $B(G)_w$ is the disjoint union of $B(G)_{sw}$ and $B(G)_{sw\sigma(s)}$. We study these subsets separately.
    \begin{enumerate}[(i)]
    \item
    First consider the case $[b]\in B(G)_{sw}$. Then the strong multiplicity one property shows
    \begin{align*}
        \dim X_w(b) &= 1+\dim X_{sw}(b) \underset{\text{ind}}= \frac 12\left(\ell(sw) + \ell_{R,\sigma}(\cl\,sw) - \langle \nu(b),2\rho\rangle - \de(b)\right)
        \intertext{Noticing $\ell(sw) = \ell(w)-1$ and $\ell_{R,\sigma}(\cl\,sw) \geq \ell_{R,\sigma}(\cl\,w)-1$, we get}
        \cdots&\geq \frac 12\left(\ell(w) + \ell_{R,\sigma}(\cl\,w) - \langle \nu(b),2\rho\rangle - \de(b)\right).\tag{i.1}
    \end{align*}
    By the Deligne-Lusztig reduction method, we get $[b_{w,\max}] = [b_{sw,\max}]$. Hence
    \begin{align*}
        &\dim X_w(b) - \dim X_{w}(b_{w,\max}) = (1+\dim X_{sw}(b)) - (1+\dim X_{sw}(b_{sw,\max}))
        \\&~\underset{\text{ind}}=\text{Ess.-gap}([b], [b_{w,\max}]).\tag{i.2}.
    \end{align*}
    \item Now we consider the case $[b]\in B(G)_{sw\sigma(s)}$. Then the strong multiplicity one property shows
    \begin{align*}
        \dim X_w(b) &= 1+\dim X_{sw\sigma(s)}(b) \\&\underset{\text{ind}}= \frac 12\left(\ell(sw\sigma(s)) + \ell_{R,\sigma}(\cl\,sw\sigma(s)) - \langle \nu(b),2\rho\rangle - \de(b)\right).
        \intertext{Noticing $\ell(sw\sigma(s)) = \ell(w)-2$ and $\ell_{R,\sigma}(\cl\,sw\sigma(s)) = \ell_{R,\sigma}(\cl\,w)$, we get}
        \cdots&= \frac 12\left(\ell(w) + \ell_{R,\sigma}(\cl\,w) - \langle \nu(b),2\rho\rangle - \de(b)\right).\tag{ii.1}
    \end{align*}
    \end{enumerate}
    Now we prove that property (1) of Theorem~\ref{thm:purity} is satisfied for all $[b]\in B(G)_w$. Indeed, for $[b]\in B(G)_{sw}$, this is (i.2). For $[b]\in B(G)_{sw\sigma(s)}$, this follows from the dimension formula in (ii.1) together with the estimate (i.2), the latter one applied to $[b_{w,\max}]$. This shows that $(w,b)$ satisfies the equivalent conditions of Theorem~\ref{thm:purity} for all $[b]\in B(G)_w$.

    Now consider $[b_1]\in B(G)_{sw}$ and pick an arbitrary $[b_2]\in B(G)_{sw\sigma(s)}$. Then using the formula from Theorem~\ref{thm:purity} (2), we get
    \begin{align*}
        \dim X_w(b_1) &= \dim X_w(b_{w,\max}) + \text{Ess.-gap}([b_1], [b_{w,\max}]) \\&= 
        \dim X_w(b_2) + \text{Ess.-gap}([b_1], [b_{w,\max}]) - \text{Ess.-gap}([b_2], [b_{w,\max}]).
    \end{align*}
    Using the explicit formula for the essential gap together with (ii.1) applied to $\dim X_w(b_2)$, we get the claimed formula for for $X_w(b_1)$. This finishes the induction and the proof.
\end{proof}


It is clear that Proposition~\ref{prop:gctDimensionFormula} together with Theorem~\ref{thm:purity} implies Theorem~\ref{thm:gctPurity}.

Finally, one may ask to describe the lengths of paths and endpoints for geometric Coxeter elements $w\in\tW$ and arbitrary $[b]\in B(G)$ with $X_w(b)\neq\emptyset$, similar to \cite[Theorem~7.1]{HNY1} or \cite[Theorem~5.7, Proposition~6.7]{SSY}. For the path lengths, this is easy.
\begin{theorem}\label{thm:lengthExplicit}
Let $w\in \tW$ be an element of geometric Coxeter type and $[b]\in B(G)$ with $X_w(b)\neq\emptyset$. Then every reduction path for $w$ and $[b]$ contains precisely $\ell_1$ edges of type I, and $\ell_2$ edges of type II, where
\begin{align*}
    \ell_1 &:= \# I(\nu(b_{w,\min}))/\sigma - \# I(\nu(b))/\sigma = \# I(\nu(\dot w))/\sigma - \# I(\nu(b))/\sigma,
    \\\ell_2&:=\ell([b], [b_{w,\max}]) = \frac 12\left(\ell(w) - \ell_{R,\sigma}(\cl\, w) - \langle \nu(b),2\rho\rangle + \de(b)\right).
\end{align*}
In particular, as scheme over $\overline{\mathbb F}_q$, the ADLV $X_w(b)$ is universally homeomorphic to
\begin{align*}
    \mathbb G_m^{\ell_1}\times \mathbb A^{\ell_2} \times C \times D,
\end{align*}where $C$ is a classical Deligne-Lusztig variety of Coxeter type and $D$ is a union of closed points.
\end{theorem}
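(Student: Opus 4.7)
The plan is to combine Theorem~\ref{thm:geo-cox}, Proposition~\ref{prop:gctDimensionFormula}, and Theorem~\ref{thm:gctPurity} to extract explicit formulas for the path counts $\ell_{\text{I}}(\ud p)$ and $\ell_{\text{II}}(\ud p)$, and then translate them into the claimed $\ell_1$ and $\ell_2$. Let $e = c_K x$ be the endpoint of the (unique, by strong multiplicity one) path $\ud p$ with $[\dot e] = [b]$, where $c_K$ is an $\Ad(x)\circ\sigma$-Coxeter element in a finite $W_K$ and $x$ is $\sigma$-straight. Since $\dim X_e(b) = \ell(c_K)$ (a classical Coxeter Deligne--Lusztig variety, as explained in Section~\ref{sec:5.2}), Theorem~\ref{thm:geo-cox} gives $\dim X_w(b) = \ell_{\text{I}}(\ud p) + \ell_{\text{II}}(\ud p) + \ell(c_K)$. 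Combined with the path length identity $\ell(w) = \ell(c_K) + \langle\nu(b), 2\rho\rangle + \ell_{\text{I}}(\ud p) + 2\ell_{\text{II}}(\ud p)$ (using $\ell(x) = \langle\nu(b), 2\rho\rangle$ from $\sigma$-straightness of $x$) and the dimension formula of Proposition~\ref{prop:gctDimensionFormula}, elementary algebra yields
\begin{align*}
\ell_{\text{II}}(\ud p) &= \tfrac{1}{2}\bigl(\ell(w) - \ell_{R,\sigma}(\cl w) - \langle\nu(b), 2\rho\rangle + \de(b)\bigr), \\
\ell_{\text{I}}(\ud p) &= \ell_{R,\sigma}(\cl w) - \de(b) - \ell(c_K).
\end{align*}
The first identity together with Theorem~\ref{thm:gctPurity}(2) and the chain length formula recalled in Section~\ref{sec:6} gives $\ell_{\text{II}}(\ud p) = \ell([b], [b_{w,\max}]) = \ell_2$.

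To identify $\ell_{\text{I}}(\ud p)$ with $\ell_1 = \#I(\nu([b_{w,\min}]))/\sigma - \#I(\nu(b))/\sigma$, I first note that $[b_{w,\min}] = [\dot w]$ for $w$ of geometric Coxeter type, so $\nu([b_{w,\min}]) = \nu(\dot w)$; this can be established inductively along the reduction tree, in the spirit of the proof of Proposition~\ref{prop:gctDimensionFormula}. The key claim is that the quantity $\theta(w) := \ell_{R,\sigma}(\cl w) - \#I(\nu(\dot w))/\sigma$ is invariant under Deligne--Lusztig reduction. For a type II reduction $w \to sws$, both summands are preserved: the $\sigma$-twisted reflection length is invariant under $\sigma$-conjugation in $W_0$, and $[\dot{sws}] = [\dot w]$ in $B(G)$ by Lang's theorem applied to the central correction $\sigma(\dot s)^2 \in T$. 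For a type I reduction $w \to sw$, the analysis in the proof of Proposition~\ref{prop:gctDimensionFormula} forces $\ell_{R,\sigma}(\cl sw) = \ell_{R,\sigma}(\cl w) - 1$; the matching decrease $\#I(\nu([\dot{sw}]))/\sigma = \#I(\nu([\dot w]))/\sigma - 1$ follows by analyzing the change of Newton point along the $\BG_m$-bundle $X_{\mathrm{I}} \to X_{sw}(b)$ of Proposition~\ref{prop:DL-reduction}. At the endpoint, Proposition~\ref{prop:mctCharacterization} gives $\ell_{R,\sigma}(\cl e) = \de(b) + \ell(c_K)$, whence $\theta(e) = \de(b) + \ell(c_K) - \#I(\nu(b))/\sigma$. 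Equating $\theta(w) = \theta(e)$ and rearranging then gives $\ell_{\text{I}}(\ud p) = \#I(\nu(\dot w))/\sigma - \#I(\nu(b))/\sigma = \ell_1$.

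The final structural statement follows directly from Theorem~\ref{thm:geo-cox}: $X_w(b)$ is a disjoint union of irreducible components indexed by $\BJ_b(F)/\mathrm{Stab}$, each universally homeomorphic to $C \times \BG_m^{\ell_1} \times \BA^{\ell_2}$ with $C$ a classical Coxeter Deligne--Lusztig variety, and assembling them gives the desired decomposition $X_w(b) \cong \BG_m^{\ell_1} \times \BA^{\ell_2} \times C \times D$ with $D$ the discrete indexing set. I expect the main obstacle to be the matching decrease $\#I(\nu([\dot{sw}]))/\sigma = \#I(\nu([\dot w]))/\sigma - 1$ under type I reductions, which requires careful analysis of how the minimum $\sigma$-conjugacy class inside $\CI w \CI$ transforms under the Deligne--Lusztig reduction in the geometric Coxeter type setting.
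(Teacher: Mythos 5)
Your algebraic skeleton is sound and in fact a bit cleaner than the paper's: from Theorem~\ref{thm:geo-cox}, the length bookkeeping $\ell(w)=\ell(c_K)+\langle\nu(b),2\rho\rangle+\ell_{\text{I}}(\ud p)+2\ell_{\text{II}}(\ud p)$, and Proposition~\ref{prop:gctDimensionFormula}, you correctly extract $\ell_{\text{II}}(\ud p)=\frac 12\bigl(\ell(w)-\ell_{R,\sigma}(\cl\,w)-\langle\nu(b),2\rho\rangle+\de(b)\bigr)$ and $\ell_{\text{I}}(\ud p)=\ell_{R,\sigma}(\cl\,w)-\de(b)-\ell(c_K)$, and the identification of the first with $\ell([b],[b_{w,\max}])$ via the chain length formula and the estimate at $[b_{w,\max}]$ matches what the paper does. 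Your use of Proposition~\ref{prop:mctCharacterization} to compute $\ell_{R,\sigma}(\cl\,e)=\de(b)+\ell(c_K)$ at the endpoint is also correct. The paper instead runs a direct joint induction on $\ell(w)$ for the step counts, which is essentially your $\theta$-invariance argument phrased differently; both are organized around the same pivot.

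The genuine gap is exactly where you flag it: the claim that $\#I(\nu([\dot{sw}]))/\sigma=\#I(\nu([\dot w]))/\sigma-1$ under a type~I edge. Your proposed justification --- ``analyzing the change of Newton point along the $\BG_m$-bundle $X_{\text{I}}\to X_{sw}(b)$'' --- does not supply an argument. The fibration only describes the geometry of one stratum of $X_w(b)$; it says nothing by itself about how $[b_{w,\min}]$ and its Newton support relate to $[b_{sw,\min}]$. The paper proves this as Lemma~\ref{lem:lengthExplicitHelper}: using the saturatedness of $B(G)_w$, $B(G)_{sw}$ and $B(G)_{sw\sigma(s)}$ (which is itself being established in parallel via Theorem~\ref{thm:gctPurity}), together with the structure of $B(G)_w$ from \cite[Theorem~1.1]{Vi21} and \cite[Corollary~4]{Sch24_strata}, one shows that the set $J=I(\nu(b_{sw\sigma(s),\min}))\setminus I(\nu(b_{sw,\min}))$ must be a single $\sigma$-orbit of simple roots --- otherwise one could take a join of classes $[b_o]$ to recover $[b_{sw,\min}]$ inside $B(G)_{sw\sigma(s)}$, violating strong multiplicity one. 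This argument is not ``careful analysis of the $\BG_m$-bundle'' but an argument inside the poset $B(G)$; without it the $\ell_1$ identification is unproved. (Your parallel claim $\ell_{R,\sigma}(\cl\,sw)=\ell_{R,\sigma}(\cl\,w)-1$ on the other hand is indeed forced by comparing (i.1) and (i.2) in the proof of Proposition~\ref{prop:gctDimensionFormula}, so that half of the $\theta$-invariance is fine.) The claim $[b_{w,\min}]=[\dot w]$ also needs the formal part of Lemma~\ref{lem:lengthExplicitHelper}(1), but that piece is routine once saturatedness and strong multiplicity one are in hand.
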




For the proof, we need the following lemma.
\begin{lemma}\label{lem:lengthExplicitHelper}
    Let $w\in\tW$ be an element satisfying strong multiplicity one and consider a simple affine reflection $s = s_i$ for $i\in \tilde\BS$ with $\ell(sw\sigma(s)) = \ell(w)-2$. Put $w_1 = sw$ and $w_2=sw\sigma(s)$. Assume that $B(G)_w, B(G)_{w_1}$ and $B(G)_{w_2}$ are saturated. Then
    \begin{enumerate}[(1)]
    \item $[b_{w,\min}] = [b_{w_2,\min}]$,
    \item $I(\nu(b_{w,\min}))\subsetneq I(\nu(b_{w_1,\min}))$ and $I(\nu(b_{w_1,\min})) - I(\nu(b_{w_2,\min}))$ is equal to the $\sigma$-orbit of a simple root and
    \item $[b_{w,\max}] = [b_{w_1,\max}]$.
    \end{enumerate}
\end{lemma}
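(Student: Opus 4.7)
The plan hinges on the disjoint decomposition $B(G)_w = B(G)_{w_1} \sqcup B(G)_{w_2}$ forced by strong multiplicity one combined with Proposition~\ref{prop:DL-reduction}; both parts are nonempty because $[\dw_i] \in B(G)_{w_i}$ for $i=1,2$ (via the identity coset). The three items will be obtained in the order (3), (1), (2).

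For (3), I would argue by contradiction. Suppose $[b_{w,\max}] \in B(G)_{w_2}$. Proposition~\ref{prop:DL-reduction} yields $\dim X_w(b_{w,\max}) = 1 + \dim X_{w_2}(b_{w,\max})$. Combining this with the general dimension upper bound $\dim X_{w_2}(b_{w,\max}) \le \ell(w_2) - \langle\nu(b_{w,\max}), 2\rho\rangle$ and the formula $\dim X_w(b_{w,\max}) = \ell(w) - \langle\nu(b_{w,\max}), 2\rho\rangle$ for the generic Newton stratum (a consequence of \cite{Viehmann2014}) gives $\ell(w) \le \ell(w_2) + 1 = \ell(w) - 1$, a contradiction. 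Hence $[b_{w,\max}] \in B(G)_{w_1}$, and since $B(G)_{w_1} \subseteq B(G)_w$ forces $[b_{w_1,\max}] \le [b_{w,\max}]$, maximality yields $[b_{w_1,\max}] = [b_{w,\max}]$.

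For (1), the argument is purely order-theoretic. Suppose $[b_{w,\min}] \in B(G)_{w_1}$; then $[b_{w_1,\min}] = [b_{w,\min}]$, and combined with (3), the saturation of $B(G)_{w_1}$ gives $B(G)_{w_1} = [[b_{w_1,\min}], [b_{w_1,\max}]] = [[b_{w,\min}], [b_{w,\max}]] = B(G)_w$, so $B(G)_{w_2} = \emptyset$ by disjointness. This contradicts $[\dw_2] \in B(G)_{w_2}$. Hence $[b_{w,\min}] \in B(G)_{w_2}$, and minimality within the saturated interval $B(G)_{w_2}$ forces $[b_{w,\min}] = [b_{w_2,\min}]$.

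The main obstacle is (2). My plan is to combine Proposition~\ref{prop:gctDimensionFormula} (applied inductively to the shorter elements $w_1$ and $w_2$) with the two Deligne--Lusztig identities $\dim X_w(b_{w_1,\min}) = 1 + \dim X_{w_1}(b_{w_1,\min})$ and $\dim X_w(b_{w,\min}) = 1 + \dim X_{w_2}(b_{w_2,\min})$. Via the essential-gap formula recalled from \cite{Chai, Ham15, Vie20}, this pins down both the Newton difference $\nu(b_{w_1,\min}) - \nu(b_{w_2,\min})$ and the defect difference $\delta(b_{w_1,\min}) - \delta(b_{w_2,\min})$ in closed form. The combinatorial description of covering relations in $B(G)$ then identifies this pair of changes with averaging a single simple coroot, which is equivalent to removing exactly one $\sigma$-orbit of simple roots from $I(\nu)$. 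The hardest technical step will be verifying that the cumulative change really is a single cover rather than a multi-step composition in the poset; here the explicit structure of the simple affine reflection $s$ defining the type I edge $w \to w_1$ plays the decisive role, selecting the specific $\sigma$-orbit of simple roots that gets removed.
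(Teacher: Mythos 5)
Your arguments for parts (1) and (3) are sound. Part (3) is slightly different from the paper (the paper uses the openness of the subscheme of $IwI$ whose reduction factors through $Iw_1I$, hence that it must meet the generic stratum, whereas you use the dimension formula for $\dim X_w(b_{w,\max})$ plus the universal upper bound for $\dim X_{w_2}(b_{w,\max})$), but both are valid. Your part (1) is essentially the paper's formal argument: strong multiplicity one gives a disjoint decomposition $B(G)_w = B(G)_{w_1}\sqcup B(G)_{w_2}$, both nonempty, and saturatedness plus (3) force the minimum into $B(G)_{w_2}$.

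The gap is in part (2), and you have essentially flagged it yourself: "the hardest technical step will be verifying that the cumulative change really is a single cover." The difficulty is that the essential--gap and dimension computations you propose only control the pair $\bigl(\langle\nu(b_{w_1,\min})-\nu(b_{w_2,\min}),\rho\rangle,\ \de(b_{w_1,\min})-\de(b_{w_2,\min})\bigr)$, i.e.\ the rank difference in $B(G)$, and this does \emph{not} determine how the vanishing sets $I(\nu)$ differ: covers in $B(G)$ need not change $I(\nu)$ at all (they may move the Newton vector inside a face of the dominant chamber), and conversely two classes at the same level of the poset can have $I(\nu)$'s of different sizes. Moreover, you invoke Proposition~\ref{prop:gctDimensionFormula}, which requires geometric Coxeter type, whereas the lemma is stated (and needed) for the broader class of strong-multiplicity-one elements with saturated Newton sets. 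The paper's route is quite different and purely poset-theoretic: first it derives $I(\nu(b_{w_1,\min}))\subsetneq I(\nu(b_{w,\min}))$ from the mod-$I$ congruence of Newton points in $B(G)_w$ (quoting \cite{Vi21} and \cite{Sch24_strata}) combined with saturatedness and part (3); then, for each $\sigma$-orbit $o$ in the difference set $J$, it exhibits an intermediate class $[b_o]$ lying strictly between $[b_{w,\min}]$ and $[b_{w_1,\min}]$, hence in $B(G)_{w_2}$. Saturatedness of $B(G)_{w_2}$ forces the \emph{join} of all the $[b_o]$'s to lie in $B(G)_{w_2}$, but that join equals $[b_{w_1,\min}]$ as soon as $J$ contains two $\sigma$-orbits, contradicting disjointness with $B(G)_{w_1}$. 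This "join of partial averages" trick is the idea you are missing; without it, the step from your numerical bound to the $I(\nu)$ statement does not go through.
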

\begin{proof}
    Part (3) follows from the Deligne-Lusztig reduction method, noticing that the $\sigma$-conjugates of $\CI w_1 \CI$ contain a non-empty open subset of $\CI w \CI$, which hence must intersect $[b_{w,\max}]$.
    
    Then part (1) follows from formal arguments: By the Deligne-Lusztig reduction method, we get $B(G)_w = B(G)_{w_1} \cup B(G)_{w_2}$, and this union is disjoint by the strong multiplicity one property. If we had $[b_{w_1,\min}]\leq [b_{w_2,\min}]$, we would get $B(G)_{w_2}\subseteq B(G)_{w_1}$ by saturatedness and (1). Hence $[b_{w,\min}] = \min([b_{w_1,\min}], [b_{w_2,\min}])$ agrees with $[b_{w_2,\min}]$.

    By \cite[Theorem~1.1]{Vi21} and \cite[Corollary~4]{Sch24_strata}, we get 
    \begin{align*}
        \nu(b)\equiv \nu(b_{w,\min}) \pmod{I(\nu(b_{w,\min}))}
    \end{align*} for all $[b]\in B(G)_w$, and similarly for $w_1$ and $w_2$. Using saturatedness, we get
    \begin{align*}
        B(G)_w = \{[b]\in B(G)\mid [b]\leq [b_{w,\max}]\text{ and }\nu(b) \equiv \nu(b_{w,\max})\pmod{I(\nu(b_{w,\min}))}\},
    \end{align*}
    and similar for $w_1, w_2$.
    
    Applying this to $[b] = [b_{w_1,\min}]$, we get $I(\nu(b_{w_1,\min}))\subsetneq I(\nu(b_{w,\min}))$. Define
    \begin{align*}
        J := I(\nu(b_{w,\min}))\setminus I(\nu(b_{w_1,\min})).
    \end{align*}
    For any $\sigma$-orbit $o\subseteq J$, we find a $\sigma$-conjugacy class $[b_o]$ whose Kottwitz point agrees with that of $[b_{w_1,\min}]$ and whose Newton point is the $I(\nu(b_{w_1,\min}))\cup o$-average of the Newton point $\nu(b_{w_1,\min})$. Since $[b_{w,\min}]\leq [b_o]<[b_{w_1,\min}]$, we get $[b_o]\in B(G)_{w_2}$. By saturatedness, $B(G)_{w_2}$ contains the join of all these $\sigma$-conjugacy classes. One may easily see that this join is $[b_{w_1,\min}]$ if $J$ contains at least two distinct $\sigma$-orbits of simple roots. This is a contradiction to strong multiplicity one. Hence $J$ contains exactly one such orbit, i.e.\ $J$ is a $\sigma$-orbit of simple roots. This proves (2).
\end{proof}
\begin{proof}[Proof of Theorem~\ref{thm:lengthExplicit}]
    We first prove that the two expressions for $\ell_{\text{II}}$ agree. Using the length formula
    \begin{align*}
        \ell([b], [b_{w,\max}]) = \langle \nu(b_{w,\max}) - \nu(b),\rho\rangle + \frac 12(\de(b) - \de(b_{w,\max}))
    \end{align*}
    from \cite{Chai}, it suffices to show this in case $[b] = [b_{w,\max}]$. Then we use Proposition~\ref{prop:gctDimensionFormula} together with \cite[Theorem~2.23]{He-CDM} to get
    \begin{align*}
        &\frac 12\left(\ell(w) + \ell_{R,\sigma}(\cl\, w) -\langle \nu(b_{w,\max}),2\rho\rangle - \de(b_{w,\max})\right) = \dim X_w(b_{w,\max}) \\&= \ell(w) - \langle \nu(b_{w,\max}),2\rho\rangle.
    \end{align*}
    So the value of $\ell_{\text{II}}$ is well-defined in all cases.

    We show that $[b_{w,\min}] = [\dot w]$ and that every reduction path $\underline p$ of $w$ and $[b]$ satisfies $\ell_{\text{I}}(\underline p) = \ell_{\text{I}}$ and $\ell_{\text{II}}(\underline p) = \ell_{\text{II}}$. Both claims are proved simultaneously using induction on $\ell(w)$.

    For the inductive start, assume that $w$ is a minimal length element. The claim $[b_{w,\min}] = [\dot w]$ follows from \cite[Theorem~3.5]{He14}. Now $\ell_{I} = \ell_{\text{II}}=0$ as $[b] = [\dot w] = [b_{w,\max}]$ by loc.\ cit.

    In the inductive step, we observe that our claims are invariant under length preserving cyclic shifts $w\leftrightarrow sw\sigma(s)$. So regarding the Deligne-Lusztig reduction method, it suffices to focus on the case where $\ell(sw\sigma(s))=\ell(w)-2$ for a simple affine reflection $s$. In this case, the Deligne-Lusztig reduction method together with strong multiplicity one show that $B(G)_w$ is the disjoint union $B(G)_{w_1}\sqcup B(G)_{w_2}$, where $w_1 = sw$ and $w_2 = sw\sigma(s)$. Note that the inductive assumption applies to both $w_1$ and $w_2$.

    In particular, we get
    \begin{align*}
        [b_{w,\min}]\underset{\text{L\ref{lem:lengthExplicitHelper}}}= [b_{w_2,\min}]\underset{\text{ind}}=[\dot w_2] = [\dot w]
    \end{align*}
    as $w$ and $w_2$ are $\sigma$-conjugate. It remains to prove the claim on $\ell_{\text{I}}(p)$ and $\ell_{\text{II}}(p)$.

    First consider the case where $[b]\in B(G)_{w_1}$. Then $\underline p$ must be the type I edge $w\to w_1$ followed by a path $\underline p_1$. We get
    \begin{align*}
        \ell_{\text{I}}(p) &= 1+\ell_{\text{I}}(p_1) \underset{\text{ind}}=1 + \# I(\nu(b_{w_1,\min}))/\sigma - \#I(\nu(b))/\sigma \\&\underset{\text{L\ref{lem:lengthExplicitHelper}}}=\#I(\nu(b_{w,\min}))/\sigma - \# I(\nu(b))/\sigma\\
        \ell_{\text{II}}(p)&=\ell_{\text{II}}(p_1) \underset{\text{ind}}=\ell([b], [b_{w_1,\max}]) \underset{\text{L\ref{lem:lengthExplicitHelper}}}=\ell([b], [b_{w, \max}]).
    \end{align*}
    It remains to consider the case where $[b]\in B(G)_{w_2}$. Then $\underline p$ must be the type II edge $w\to w_2$ followed by a path $\underline p_2$. We get
    \begin{align*}
        \ell_{I}(p) &= \ell_{I}(p_2) \underset{\text{ind}}= \# I(\nu(b_{w_2},\min))/\sigma - \# I(\nu(b))/\sigma\\&\underset{\text{L\ref{lem:lengthExplicitHelper}}}=\# I(\nu(b_w,\min))/\sigma - \# I(\nu(b))/\sigma\\
        \ell_{\text{II}}(p)&=1+\ell_{\text{II}}(p_2)\underset{\text{ind}}=1+\ell([b], [b_{w_2,\max}]).
    \end{align*}
    It hence suffices to show that $1 = \ell([b_{w_2,\max}], [b_{w,\max}])$. From our earlier proof of \enquote{well-definedness of $\ell_{\text{I}}$}, we get
    \begin{align*}
        \ell([b_{w_2,\max}], [b_{w,\max}]) = \frac 12\left(\ell(w) - \ell(w_2) + \ell_{R,\sigma}(\cl\, w) -\ell_{R,\sigma}(\cl\, w_2)\right).
    \end{align*}
    Noticing that $\ell(w_2) = \ell(w)-2$ by assumption and that $\cl\, w$ is $\sigma$-conjugate to $\cl\, w_2$, this expression evaluates to $1$ as desired. This finishes the induction and the proof.
\end{proof}

\printbibliography
\end{document}